\documentclass[12pt]{amsart}
\usepackage{amssymb,latexsym}
\usepackage{enumerate}

\makeatletter
\@namedef{subjclassname@2010}{%
  \textup{2010} Mathematics Subject Classification}
\makeatother

\newtheorem{thm}{Theorem}[section]
\newtheorem{cor}[thm]{Corollary}
%%%%%%%%%%%%%%%%%%%%%%%%%%%%%%%%
%%%%%%%%%%%%%%%%%%%%%%%%%%%%%%%%
\newtheorem{lemma}[thm]{Lemma}
\newtheorem{prop}[thm]{Proposition}

%%%%%%%%%%%%%%%%%%%%%%%%%%%%%%%%%
%%%%%%%%%%%%%%%%%%%%%%%%%%%%%%%%%

%% A numbered theorem with a fancy name:

%%%%%%%%%%%%%%%%%%%%%%%%%%%%%%%

%%%%%%%%%%%%%%%%%%%%%%%%%%%%%%%%%%
%% Numbered objects of "non-theorem" style (text roman):

\theoremstyle{definition}
\newtheorem{defin}[thm]{Definition}

\newtheorem{exa}[thm]{Example}

%% An unnumbered remark:

%% Equations numbered by section:

\numberwithin{equation}{section}

%%%%%%%%%%% For IMPAN journals:

%\frenchspacing

%Textwidth=13.5cm
%Textheight=23cm
%\parindent=16pt
%\oddsidemargin=-0.5cm
%\evensidemargin=-0.5cm
%\topmargin=-0.5cm

%%%%%%%%%%%%%%%%%%%%%%%%%%%%%%%%%%%
%%%%%%%%%%%%%%%%%%%%%%%%%%%%%%%%%%%

%%%% Put your macros here:

%%%%%%%%%%%%%
%%%%%%%%%%%%%%%%%%%%%%%%%%%%%%%%%%%%%%%%%%%%%%%%
%%%%%%%%%%%%%%%%%%%%%%%%%%%%%%%%%%%%%%%%%%%%%%%%%%%%

%%%%%%%%%%%%%%%DOCUMENT STARTS HERE%%%%%%%%%%%%%%

%%%%%%%%%%%%%%%%%%%%%%%%%%%%%%%%%%%%%%%%%%%%%%%
%%%%%%%%%%%%%%%%%%%%%%%%%%%%%%%%%%%%%%%%%%%%%%%

%%%%%%%%%%%%%%%%%%%%%%%%%%%%%%%%%%%%%%%%%%%
%%%%%%%%%%%%%%%%%%%%%%%%%%%%%%%%%%%%%%%%%%%

%\defe{e}
%\pagestyle{myheadings}

%%%%%%%%%%%%%%%DOCUMENT STARTS HERE%%%%%%%%%%%%%%

\begin{document}

\baselineskip=17pt

\title[Bergman-Lorentz spaces]{Bergman-Lorentz spaces on tube domains over symmetric cones}
\author{DAVID BEKOLLE, JOCELYN GONESSA AND CYRILLE NANA}
%\address{Current address of Bekolle: University of Yaound\'e 1, Faculty of Sciences, Department of Mathematics, P. O. Box 812, Yaound\'e-Cameroon.}
\address{University of Ngaound\'er\'e, Faculty of Science, Department of Mathematics, P. O. Box 454, Ngaound\'er\'e, Cameroon}
\email{dbekolle@univ-ndere.cm}
\address{Universit\'e de Bangui, Facult\'e des Sciences, D\'epartement de math\'ematiques et Informatique, BP. 908 Bangui, R\'epublique Centrafricaine}
\email{gonessa.jocelyn@gmail.com}
\date{}
\address{Faculty of Science, Department of Mathematics, University of Buea, P.O. Box 63, Buea, Cameroon}
\email{{\tt nana.cyrille@ubuea.cm}}

\maketitle
  
%%%%%%%%%%%%%%%beginig  %%%%%%%%%%%%%%%%%%%%%%%%%%%%%%%%%%%%
\begin{abstract}
We study Bergman-Lorentz spaces on tube domains over symmetric cones, i.e. spaces of holomorphic functions which belong to Lorentz spaces $L(p, q).$ We establish boundedness and surjectivity of Bergman projectors from Lorentz spaces to the corresponding Bergman-Lorentz spaces and real interpolation between  Bergman-Lorentz spaces. Finally we ask a question whose positive answer would enlarge the interval of parameters $p\in (1, \infty)$ such that the relevant Bergman projector is bounded on $L^p$ for cones of rank $r\geq 3.$
\end{abstract}

\section{Introduction} 
The notations and definitions are those of \cite{FK}. We denote by $\Omega$ an irreducible symmetric cone in $\mathbb R^n$ with rank $r$ and determinant $\Delta.$ We denote $T_\Omega = \mathbb R^n +i\Omega$ the tube domain in $\mathbb C^n$ over $\Omega.$ For $\nu \in \mathbb R,$ we define the weighted measure $\mu$ on $T_\Omega$ by $d\mu (x+iy) = \Delta^{\nu-\frac nr} (y)dxdy.$ We consider Lebesgue spaces $L_\nu^p$ and Lorentz spaces $L_\nu (p, q)$ on the measure space $(T_\Omega, d\mu).$ The Bergman space $A^p_\nu$ (resp. the Bergman-Lorentz space $A_\nu (p, q))$ is the subspace of $L_\nu^p$ (resp. of $L_\nu (p, q))$ consisting of holomorphic functions. \\
Our first result is the following.

\begin{thm}\label{th1}
Let $1< p\leq \infty$ and $1\leq q \leq \infty.$ 
\begin{enumerate}
\item 
For $\nu \leq \frac nr -1,$ the Bergman-Lorentz space $A_\nu (p, q)$ is trivial, i.e $A_\nu (p, q)=\{0\}.$
\item
Suppose $\nu >\frac nr -1.$ Equipped with the norm induced by the Lorentz space $L_\nu (p, q),$ the Bergman-Lorentz space $A_\nu (p, q)$ is a Banach space.
\end{enumerate}
\end{thm}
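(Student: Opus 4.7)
I handle the two parts separately, starting with the more routine (2).

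\emph{For part (2)}, assume $\nu > \frac{n}{r}-1$. Since $L_\nu(p,q)$ is a Banach space for $1<p\le\infty$ and $1\le q\le\infty$ (via the equivalent norm built from the maximal function $f^{**}$), it suffices to show that $A_\nu(p,q)$ is closed in $L_\nu(p,q)$. The crux is the pointwise bound $|F(z_0)|\le C(z_0)\,\|F\|_{A_\nu(p,q)}$ for every $F\in A_\nu(p,q)$ and $z_0\in T_\Omega$, with $C(z_0)$ finite on every compact subset of $T_\Omega$. Granting this, any Cauchy sequence in $A_\nu(p,q)$ converges uniformly on compacta, and its $L_\nu(p,q)$-limit is automatically holomorphic by Weierstrass's theorem; hence $A_\nu(p,q)$ is closed.

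To establish the pointwise bound I would pick a Euclidean ball $B \subset T_\Omega$ centered at $z_0$ with radius small enough that $\Delta^{\nu-\frac{n}{r}}$ is pinched between positive constants on $B$. The sub--mean value inequality for holomorphic functions gives $|F(z_0)|\le V(B)^{-1}\int_B|F|\,dV$, and replacing $dV$ by $d\mu$ costs only a finite constant on $B$. H\"older's inequality for Lorentz spaces (i.e.\ $L(p,q)\cdot L(p',q')\hookrightarrow L^1$ with $\frac{1}{p}+\frac{1}{p'}=1=\frac{1}{q}+\frac{1}{q'}$) applied to $|F|\cdot\chi_B$ yields $\int|F|\chi_B\,d\mu \le C\|F\|_{L_\nu(p,q)}\|\chi_B\|_{L_\nu(p',q')}$, and the standard identity $\|\chi_B\|_{L(p',q')}=c_{p',q'}\mu(B)^{1/p'}$ completes the estimate.

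\emph{For part (1)}, assume $\nu\le \frac{n}{r}-1$. The Lorentz inclusion $L_\nu(p,q)\hookrightarrow L_\nu(p,\infty)$ reduces the problem to proving $A_\nu(p,\infty)=\{0\}$. For $F\in A_\nu(p,\infty)$ the elementary estimate $\int_E|F|\,d\mu\le C\,\mu(E)^{1-1/p}\|F\|_{L(p,\infty)}$ on sets of finite measure, combined with sub--mean value on Euclidean balls of radius comparable to $d(y,\partial\Omega)\sim\Delta(y)^{1/r}$, yields a pointwise decay $|F(x+iy)|\le C\,\Delta(y)^{-\beta}\|F\|$ with a computable exponent $\beta=\beta(p,\nu,n,r)$. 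The classical argument proving triviality of $A^p_\nu$ in the range $\nu\le \frac{n}{r}-1$ (see \cite{FK}) then applies \emph{mutatis mutandis} to conclude $F\equiv 0$.

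\emph{Main obstacle.} Part (2) is essentially bookkeeping once H\"older in Lorentz spaces is at hand. The real difficulty is part (1): producing pointwise decay strong enough to rule out non-zero $F$ in the borderline regime $\nu=\frac{n}{r}-1$; the exponent $\beta$ depends delicately on $(p,\nu,n,r)$ and the argument is tight in the critical case.
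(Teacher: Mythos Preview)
Your treatment of part~(2) is essentially the paper's argument: mean value on a small ball, then control $\int_B |F|\,d\mu$ by $\|F\|_{p,q}$. The paper does the last step via the Hardy--Littlewood inequality $\int_B |F|\,d\mu \le \int_0^{\mu(B)} F^\star(t)\,dt$ together with the embedding $L_\nu(p,q)\hookrightarrow L_\nu(p,\infty)$; you use H\"older for Lorentz spaces. Both work. One point you skate over: the uniform-on-compacta limit (holomorphic) and the $L_\nu(p,q)$-limit must be identified; the paper does this by extracting an a.e.\ convergent subsequence.

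Part~(1) has a real gap. Your asserted comparison $d(y,\partial\Omega)\sim\Delta(y)^{1/r}$ is false for rank $r\ge 2$: the Euclidean distance from $y\in\Omega$ to $\partial\Omega$ is the smallest eigenvalue $\lambda_{\min}(y)$, whereas $\Delta(y)^{1/r}$ is the geometric mean of the eigenvalues, and these are not comparable (take $y$ with eigenvalues $\epsilon,\epsilon^{-1}$). As a result, the Euclidean-ball mean-value estimate does not produce a bound of the form $|F(x+iy)|\le C\Delta(y)^{-\beta}$; it gives a bound mixing $\lambda_{\min}(y)$ and $\Delta(y)$, and the downstream ``classical argument'' does not apply. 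The paper avoids this by using \emph{Bergman} balls, for which $\mu\bigl(B_{\mathrm{berg}}(z,1)\bigr)\simeq\Delta^{\nu+\frac nr}(\Im m\,z)$, obtaining the clean estimate
\[
|F(x+i(y+\mathbf e))|\,\Delta^{\frac{\nu+\frac nr}{p}}(y+\mathbf e)\le C_p\|F\|_{p,\infty}.
\]
From there the paper does not invoke a generic triviality argument: it shows that $z\mapsto F(z+i\mathbf e)\Delta^{-\alpha}\!\left(\frac{z+i\mathbf e}{i}\right)$ lands in $A^1_\nu$ for $\alpha$ large (with a case split $\nu\le -\frac nr$ versus $-\frac nr<\nu\le\frac nr-1$), then uses the known fact $A^1_\nu=\{0\}$ for $\nu\le\frac nr-1$ and analytic continuation. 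Your ``mutatis mutandis'' is exactly the place where the geometry of the cone matters and the details need to be supplied.
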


For $p=2,$ the Bergman space $A_\nu^2$ is a closed subspace of the Hilbert space $L_\nu^2$ and the Bergman projector $P_\nu$ is the orthogonal projector from $L_\nu^2$ to $A_\nu^2.$ We adopt the notation 
$$Q_\nu = 1+\frac \nu{\frac nr -1}.$$
 Our boundedness theorem for Bergman projectors on Lorentz spaces is the following.

\begin{thm}\label{th2}
Let $\nu >\frac nr -1$ and $1\leq q\leq \infty.$ The weighted Bergman projector $P_\gamma, \hskip 2truemm \gamma \geq \nu + \frac \nu r -1$ (resp. the Bergman projector $P_\nu)$ extends to a bounded  operator from $L_\nu (p, q)$ to $A_\nu (p, q)$ for all $1<  p < Q_\nu$ (resp. for all $1+Q_\nu^{-1}<p<1+Q_\nu$). In this case, under the restriction  $1<q<\infty$ (resp. $1\leq q<\infty)$, then  $P_\gamma$ for $\gamma$ sufficiently large (resp.  $P_\nu)$ is the identity on $A_\nu (p, q).$
\end{thm}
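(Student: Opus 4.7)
My plan is to deduce the Lorentz-space boundedness from the known $L^p_\nu$-boundedness of the Bergman projectors by real interpolation, and then to obtain the identity statement via a density argument.

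\emph{Step 1 (endpoint $L^p_\nu$-bounds).} First I would invoke the $L^p_\nu$-boundedness results already available on tube domains over symmetric cones: for $\gamma \geq \nu+\frac{n}{r}-1$ the weighted projector $P_\gamma$ is bounded on $L^p_\nu$ for $1<p<Q_\nu$, while $P_\nu$ itself is bounded on $L^p_\nu$ precisely in the narrower range $1+Q_\nu^{-1}<p<1+Q_\nu$. These are the endpoint bounds from which the argument starts.

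\emph{Step 2 (real interpolation).} The Lorentz spaces are real interpolation spaces,
$$L_\nu(p,q) \;=\; \bigl(L^{p_0}_\nu,\, L^{p_1}_\nu\bigr)_{\theta,q}, \qquad \frac{1}{p} \;=\; \frac{1-\theta}{p_0}+\frac{\theta}{p_1}, \qquad 0<\theta<1,\quad 1\leq q \leq \infty.$$
Given any $p$ in the open $L^p_\nu$-boundedness interval, I pick $p_0<p<p_1$ still inside that interval, so that $P_\gamma$ (resp.\ $P_\nu$) is bounded on both endpoint spaces; the real interpolation theorem then yields boundedness on $L_\nu(p,q)$ for every $1\leq q\leq \infty$. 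Since $P$ is defined by integration against the holomorphic Bergman kernel, its image consists of holomorphic functions, hence lies in $A_\nu(p,q)$.

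\emph{Step 3 (identity on $A_\nu(p,q)$).} On the subspace $A_\nu(p,q)\cap A^2_\nu$ the projector $P_\nu$ acts as the identity by the reproducing kernel property, and the analogous statement holds for $P_\gamma$ once $\gamma$ is large enough to guarantee absolute convergence of the off-diagonal reproducing integral. To pass to all of $A_\nu(p,q)$, I would approximate $F \in A_\nu(p,q)$ by its cone-translates $F_\epsilon(z) = F(z+i\epsilon e)$ for a fixed $e\in\Omega$; these translates lie in $A^2_\nu\cap A_\nu(p,q)$ and, when $q<\infty$, converge to $F$ in the Lorentz norm. Combined with the boundedness from Step~2, the identity $PF=F$ then follows by continuity.

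\emph{Main obstacle.} The most delicate point is the density of $A_\nu(p,q)\cap A^2_\nu$ in $A_\nu(p,q)$ for $q<\infty$. Establishing $F_\epsilon \to F$ in $L_\nu(p,q)$ requires uniform control of the distribution function of $F-F_\epsilon$, which is more subtle than in the Lebesgue case and is precisely where the hypothesis $q<\infty$ enters, since $L_\nu(p,\infty)$ is not separable and such approximation fails there. Modulo this density step, the remainder of the argument is a standard interpolation-and-extension scheme.
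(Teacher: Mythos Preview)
Your Steps~1 and~2 (endpoint $L^p_\nu$-bounds plus real interpolation) are exactly what the paper does to obtain boundedness: this is the content of Corollary~4.2, which combines Theorem~4.1 with the real-interpolation identification $L_\nu(p,q)=[L^{p_0}_\nu,L^{p_1}_\nu]_{\theta,q}$.

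The gap is in Step~3. Your claim that the cone translates $F_\epsilon(z)=F(z+i\epsilon e)$ lie in $A^2_\nu$ is not correct in general. Translation in the imaginary direction gives pointwise decay in $\Im z$ (this is essentially Lemma~3.12), but it produces no decay whatsoever in the real variable $x=\Re z$, so $F_\epsilon$ need not be square-integrable over $T_\Omega$. Concretely, take $F(z)=\Delta^{-\alpha}\!\left(\frac{z+i\mathbf e}{i}\right)$ with $\alpha$ just above $(\nu+\tfrac{2n}{r}-1)/p$ for some $p>2$; then $F\in A^p_\nu\subset A_\nu(p,q)$, but $F_\epsilon(z)=\Delta^{-\alpha}\!\left(\frac{z+i(1+\epsilon)\mathbf e}{i}\right)$ is still governed by the same exponent $\alpha$ and fails the $L^2_\nu$ condition $2\alpha>\nu+\tfrac{2n}{r}-1$. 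More generally, a pure imaginary shift does not move a holomorphic function from one Bergman class $A^p_\nu$ into a smaller one. So the density mechanism you propose does not work, and the obstacle you flagged (convergence $F_\epsilon\to F$ in the Lorentz norm) is not the real issue.

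The paper's route to the identity is different and explains the asymmetric restrictions on $q$. One proves density of $A_\nu(p,q)\cap A^t_\gamma$ in $A_\nu(p,q)$ (Proposition~4.5) by two separate devices: for $P_\gamma$ one multiplies $F$ by the damping factor $\Delta^{-\alpha}\!\left(\frac{z/m+i\mathbf e}{i}\right)$, which supplies holomorphic decay in \emph{both} $x$ and $y$ and lands the product in $A^t_\gamma$, and then runs a Hahn--Banach/duality argument---this is why one needs $1<q<\infty$, since $L_\nu(p,1)'$ is not a Lorentz space of the right type; for $P_\nu$ one approximates $F$ in $L_\nu(p,q)$ by $\mathcal C^\infty_c$ functions $f_m$ and uses the already-established boundedness of $P_\nu$ to get $P_\nu f_m\in A_\nu(p,q)\cap A^t_\nu$ with $P_\nu f_m\to P_\nu F=F$, an argument that goes through for $q=1$ as well. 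Combining these density statements with the fact that $P_\gamma$ (resp.\ $P_\nu$) is the identity on $A^t_\gamma$ (resp.\ $A^t_\nu$) for $t$ in the appropriate interval (Corollary~4.4) and with boundedness on $L_\nu(p,q)$ yields the identity on all of $A_\nu(p,q)$.
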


For the Bergman projector $P_\nu,$ following recent developments, this theorem is extended below to a larger interval of exponents $p$ on tube domains over Lorentz cones ($r=2)$ (see section 6). Finally our real interpolation theorem between Bergman-Lorentz spaces is the following.

\begin{thm}\label{th3}
Let $\nu >\frac nr -1.$ 
%and $1\leq q<\infty.$ 
\begin{enumerate}
%\item
%For all $1\leq p_0 < p_1 < 1+Q_\nu,$ the real interpolation space $[A_\nu^{p_0}, A_\nu^{p_1}]_{\theta, q}$ identifies with $A_\nu (p, q)$ with equivalence of norms.
\item
For all $1< p_0 < p_1 < Q_\nu$  (resp. $1+Q_\nu^{-1} <  p_0 <p_1 < 1+ Q_\nu),\hskip 2truemm 1\leq q_0, q_1\leq \infty$  and  $0<\theta<1,$ the real interpolation space 
$$[A_\nu (p_0, q_0), A_\nu (p_1, q_1)]_{\theta, q}$$ 
identifies with $A_\nu (p, q), \quad \frac 1p = \frac {1-\theta}{p_0}+\frac \theta {p_1}, \hskip2 truemm 1< q<\infty$ (resp. $1\leq q<\infty$) with equivalence of norms.
\item 
For  $1< p_0 < 1+Q_\nu^{-1}, \hskip 1truemm Q_\nu < p_1 < 1+ Q_\nu, \hskip 1truemm 1\leq q_0, q_1\leq \infty,$  the Bergman-Lorentz spaces $A_\nu (p, q), \hskip 2truemm 1+Q_\nu^{-1} < p < Q_\nu, \hskip 1truemm 1\leq q<\infty$ are real interpolation spaces between $A_\nu (p_0, q_0)$ and $A_\nu (p_1, q_1)$ with equivalence of norms.
\end{enumerate} 
\end{thm}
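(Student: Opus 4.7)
The strategy is to transfer the classical real interpolation identity $[L_\nu(p_0,q_0),L_\nu(p_1,q_1)]_{\theta,q}=L_\nu(p,q)$ (valid when $p_0\neq p_1$ and $1/p=(1-\theta)/p_0+\theta/p_1$) to the Bergman--Lorentz scale, using the Bergman projector supplied by Theorem \ref{th2} as a bounded retraction of $L_\nu(p,q)$ onto $A_\nu(p,q)$. Concretely, for part (1) in the range $1<p_0<p_1<Q_\nu$, choose $\gamma$ large enough that Theorem \ref{th2} applies to $P_\gamma$ at both endpoints $(p_0,q_0)$ and $(p_1,q_1)$. Then the inclusion $A_\nu(p_i,q_i)\hookrightarrow L_\nu(p_i,q_i)$ is a coretraction with retraction $P_\gamma$ for $i=0,1$, and the standard interpolation theorem for retracts identifies $[A_\nu(p_0,q_0),A_\nu(p_1,q_1)]_{\theta,q}$ with the image of $P_\gamma$ acting on $[L_\nu(p_0,q_0),L_\nu(p_1,q_1)]_{\theta,q}=L_\nu(p,q)$, which is precisely $A_\nu(p,q)$ with equivalent norm. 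The parallel range $1+Q_\nu^{-1}<p_0<p_1<1+Q_\nu$ is handled identically with $P_\nu$ in place of $P_\gamma$.

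For part (2), the retract argument is unavailable because no single projector is bounded at both endpoints: $p_0<1+Q_\nu^{-1}$ lies outside the $L^p$-boundedness range of $P_\nu$, and $p_1>Q_\nu$ lies outside that of $P_\gamma$. The plan is therefore to invoke Wolff's reiteration theorem. Choose $a_0,a_1$ with $1+Q_\nu^{-1}<a_0<p<a_1<Q_\nu$, together with auxiliary indices $b_0,b_1\in(1,\infty)$. Since $p_0,a_0,a_1\in(1,Q_\nu)$, part (1) applied with $P_\gamma$ exhibits $A_\nu(a_0,b_0)$ as a real interpolation space between $A_\nu(p_0,q_0)$ and $A_\nu(a_1,b_1)$. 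Since $a_0,a_1,p_1\in(1+Q_\nu^{-1},1+Q_\nu)$, part (1) applied with $P_\nu$ exhibits $A_\nu(a_1,b_1)$ as a real interpolation space between $A_\nu(a_0,b_0)$ and $A_\nu(p_1,q_1)$. Wolff's theorem then guarantees that both $A_\nu(a_0,b_0)$ and $A_\nu(a_1,b_1)$ are themselves real interpolation spaces between $A_\nu(p_0,q_0)$ and $A_\nu(p_1,q_1)$. A further application of part (1) writes $A_\nu(p,q)$ as a real interpolation of $A_\nu(a_0,b_0)$ and $A_\nu(a_1,b_1)$, and the reiteration theorem then completes the argument.

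The main technical obstacles are twofold. First, Wolff's theorem demands that each $A_\nu(a_i,b_i)$ be intermediate between $A_\nu(p_0,q_0)$ and $A_\nu(p_1,q_1)$ in the sense $A_\nu(p_0,q_0)\cap A_\nu(p_1,q_1)\hookrightarrow A_\nu(a_i,b_i)\hookrightarrow A_\nu(p_0,q_0)+A_\nu(p_1,q_1)$, which must be derived from the analogous embeddings for Lorentz spaces together with the closedness of each Bergman subspace provided by Theorem \ref{th1}. Second, one must carefully track the interpolation parameters emerging from Wolff and from reiteration so as to verify that they produce exactly $A_\nu(p,q)$ with the prescribed second index $q$. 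Throughout, the identification of $P_\gamma$ (resp. $P_\nu$) as the identity on each Bergman--Lorentz space in use is supplied by the final assertion of Theorem \ref{th2} under the stated restriction on $q$.
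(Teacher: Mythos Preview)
Your overall strategy coincides with the paper's: use the boundedness of $P_\gamma$ (resp.\ $P_\nu$) on the Lorentz scale to transfer the known identity $[L_\nu(p_0,q_0),L_\nu(p_1,q_1)]_{\theta,q}=L_\nu(p,q)$ to the Bergman--Lorentz scale, and then invoke Wolff reiteration for part~(2). For part~(2) your argument is essentially the paper's, up to an extra (harmless) reiteration step at the end; the paper simply chooses one of the intermediate exponents equal to $p$ and reads off the conclusion directly from Wolff.

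There is, however, a genuine gap in your treatment of part~(1). The retract/coretract formulation requires $P_\gamma\circ\iota=\mathrm{id}$ on each endpoint space $A_\nu(p_i,q_i)$, i.e.\ that $P_\gamma$ is the identity on $A_\nu(p_i,q_i)$ for $i=0,1$. But Theorem~\ref{th2} only asserts this under the restriction $1<q_i<\infty$ (resp.\ $1\leq q_i<\infty$ for $P_\nu$), whereas the statement to be proved allows $q_0,q_1\in\{1,\infty\}$. The restriction on $q$ in the conclusion of Theorem~\ref{th3} refers to the interpolation index, not to the endpoint indices $q_0,q_1$, so you cannot simply invoke that hypothesis. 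The paper sidesteps this by comparing $K$-functionals directly: for $F\in A_\nu(p,q)$ one has $P_\gamma F=F$ (here only $1<q<\infty$ is needed), and since $P_\gamma$ is bounded $L_\nu(p_i,q_i)\to A_\nu(p_i,q_i)$ for \emph{all} $1\leq q_i\leq\infty$, any decomposition $F=a_0+a_1$ in $L_0+L_1$ yields $F=P_\gamma a_0+P_\gamma a_1$ in $A_0+A_1$, giving $K(t,F,\overline{A})\leq(\max_i\|P_\gamma\|_i)\,K(t,F,\overline{L})$. Thus the identity $P_\gamma=\mathrm{id}$ is used only at the intermediate space, not at the endpoints, and the full range $1\leq q_0,q_1\leq\infty$ is covered.
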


These results were first presented in the PhD dissertation of the second author \cite{G}.\\
\indent 
The plan of the paper is as follows. In section 2, we overview definitions and properties of Lorentz spaces on a non-atomic $\sigma$-finite measure space. This section encloses  results on real interpolation between Lorentz spaces. In section 3, we define Bergman-Lorentz spaces on a tube domain $T_\Omega$ over a symmetric cone $\Omega$ and we establish Theorem \ref{th1}. In section 4, we study the density of the subspace $A_\nu (p, q)\cap A_\gamma^t$ in the Banach space $A_\nu (p, q)$ for $\nu, \gamma >\frac nr -1, \hskip 1truemm 1< p, t \leq \infty,  \hskip 1truemm 1\leq q < \infty.$  Relying on boundedness results for the Bergman projectors $P_\gamma, \hskip 2truemm \gamma \geq  \nu+\frac nr -1$ and $P_\nu$ on Lebesgue spaces $L^p_\nu$ \cite{BBGNPR, BBGRS}, we then provide a proof of Thorem \ref{th2}. In section 5, we prove Theorem \ref{th3}. In section 6, we ask a question whose positive answer would enlarge the interval of paramaers $p\in (1, \infty)$ such that the Bergman projector $P_\nu$ is bounded on $L^p_\nu$ for upper rank cones ($r\geq 3$). A second question addresses the density of the subspace $A_\nu (p, q)\cap A_\gamma^t$ in the Banach space $A_\nu (p, q).$  The third question concerns a possible extension of Theorem 1.3.
 
\section{Lorentz spaces on measure spaces}
\subsection{Definitions and preliminary topological properties}
Throughout this section, the notation $(E, \mu)$ is fixed for a non-atomic $\sigma$-finite measure space. We refer to \cite{SW}, \cite{H}, \cite{BS} and \cite{BL}. Also cf. \cite{G}.
\begin{defin}
Let $f$ be a measurable function on $(E , \mu).$ The distribution function $\mu_f$ of $f$ is defined on $[0, \infty)$ by
$$\mu_f (\lambda) = \mu (\{x\in E: |f(x)|>\lambda\}).$$
The non-increasing rearrangement function $f^\star$ of $f$ is defined on $[0, \infty)$ by
$$f^\star (t) = \inf \{\lambda \geq 0: \mu_f (\lambda) \leq t\}.$$
\end{defin}

%\begin{lemma}\label{2.2}
%Let $f, g$ be two measurable functions on $(E , \mu).$ Then
%$$(f+g)^\star (t_1 + t_2) \leq f^\star (t_1)+g^\star (t_2)$$
%for all $t_1, t_2 \geq 0.$
%\end{lemma}

\begin{thm}[Hardy-Littlewood Theorem]\label{HL}
Let $f$ and $g$ be two measurable functions on $(E , \mu).$ Then 
$$\int_{E} |f(x)g(x)|d\mu (x) \leq \int_0^\infty f^\star (s)g^\star (s)ds.$$
In particular, let $g$ be a positive measurable function on $(E , \mu)$ and let $F$ be a measurable subset of $E$ of bounded measure $\mu (F).$ Then 
$$\int_F g(x)d\mu(x) \leq \int_0^{\mu (F)} g^\star (s)ds.$$
\end{thm}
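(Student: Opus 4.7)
The plan is to reduce both sides of the Hardy--Littlewood inequality to the same double integral via the layer cake formula, and then to recover the particular case as a specialization with $f=\chi_F$. The key fact I would use throughout is the equimeasurability of the non-increasing rearrangement: for every $\lambda\geq 0$ the Lebesgue measure of $\{s\geq 0:g^*(s)>\lambda\}$ equals $\mu_g(\lambda)$, so this level set coincides (up to a null set) with the interval $[0,\mu_g(\lambda))$. This is a direct consequence of $g^*$ being the right-continuous generalized inverse of the non-increasing function $\mu_g$.

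Applying the layer cake decomposition $|h(x)|=\int_0^\infty \chi_{\{|h|>\lambda\}}(x)\,d\lambda$ to both $|f|$ and $|g|$ and invoking Tonelli,
$$\int_E |f(x)g(x)|\,d\mu(x)=\int_0^\infty\!\!\int_0^\infty \mu\bigl(\{|f|>\lambda\}\cap\{|g|>\tau\}\bigr)\,d\lambda\,d\tau\leq \int_0^\infty\!\!\int_0^\infty \min\bigl(\mu_f(\lambda),\mu_g(\tau)\bigr)\,d\lambda\,d\tau,$$
using $\mu(A\cap B)\leq\min(\mu(A),\mu(B))$. Performing the identical computation on $[0,\infty)$ for $f^*$ and $g^*$ with respect to Lebesgue measure $m$ and substituting the equimeasurability identities $\{f^*>\lambda\}=[0,\mu_f(\lambda))$, $\{g^*>\tau\}=[0,\mu_g(\tau))$, one sees that $\int_0^\infty f^*(s)g^*(s)\,ds$ is \emph{equal} to exactly the same double integral. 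The first assertion follows immediately. The particular case is then recovered by specializing to $f=\chi_F$, for which $f^*=\chi_{[0,\mu(F))}$, so that the general inequality collapses to $\int_F |g|\,d\mu\leq \int_0^{\mu(F)} g^*(s)\,ds$.

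The only technicality I anticipate is the Tonelli step when $\mu_f$ or $\mu_g$ fails to be finite on an initial interval. In the $\sigma$-finite setting this is bypassed either by truncating $f$ and $g$ (both in amplitude and to a sequence of sets of finite measure exhausting $E$) and passing to the limit by monotone convergence, or by noting that when $\mu_f\equiv\infty$ on some $[0,\lambda_0)$ the function $f^*$ is infinite on a set of positive Lebesgue measure, rendering the right-hand side infinite and the statement vacuous. I do not expect any deeper obstacle, as the argument is essentially bookkeeping around the inverse relation between $f^*$ and $\mu_f$ encoded in the equimeasurability identity.
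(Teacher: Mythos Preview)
Your argument is correct. The layer cake decomposition combined with Tonelli gives
$$\int_E |fg|\,d\mu=\int_0^\infty\!\!\int_0^\infty \mu(\{|f|>\lambda\}\cap\{|g|>\tau\})\,d\lambda\,d\tau,$$
and the bound $\mu(A\cap B)\leq\min(\mu(A),\mu(B))$ together with the fact that the level sets of $f^\star$ and $g^\star$ are nested intervals of the correct length yields the inequality; the particular case with $f=\chi_F$ is then immediate.

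Note, however, that the paper does not supply its own proof of this statement: Theorem~\ref{HL} is recorded as a classical fact at the outset of Section~2, with references to \cite{SW}, \cite{H}, \cite{BS} and \cite{BL}. Your proof is essentially the standard one found, for instance, in Bennett--Sharpley \cite{BS}, so there is no meaningful divergence to discuss.
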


\begin{defin}
Let $1\leq p, q \leq \infty.$ The Lorentz space $L (p, q)$ is the space of measurable functions on $(E , \mu)$ such that
$$||f||^\star_{p, q} = \left (\int_0^\infty \left (t^{\frac 1p} f^\star (t)\right )^q \frac {dt}t\right ) ^{\frac 1q} < \infty \quad \rm {if} \quad 1\leq p < \infty \quad \rm {and} \quad 1\leq q < \infty$$
(resp.
$$||f||^\star_{p, \infty} = \sup \limits_{t>0} \hskip 1truemm t^{\frac 1p} f^\star (t) < \infty \quad {\rm {if}} \quad 1\leq p \leq \infty).$$
%be a measurable function on $(E , \mu).$ The distribution function $\mu_f$ of $f$ is defined on $[0, \infty)$ by 
%$$\mu_f (\lambda) = \mu (\{z\in T_\Omega: |f(z)|>\lambda\}).$$
%The non-increasing rearrangement function $f^\star$ of $f$ is defined on $[0, \infty)$ by
%$$f^\star (t) = \inf \{\lambda \geq 0: \mu_f (\lambda) \leq t\}.$$
\end{defin}
We recall that for $p=\infty$ and $q< \infty,$ this definition gives way to the space of vanishing almost everywhere functions on $(E, \mu).$ It is also well known that for $p=q,$ the Lorentz space $L (p, p)$ coincides with the Lebesgue space $L^p (E, d\mu).$ More precisely, we have the equality
\begin{equation}\label{Lp}
||f||_p = \left (\int_0^\infty f^\star (t)^p dt\right )^{\frac 1p}.
\end{equation}
In the sequel we shall adopt the following notation:
$$L^p = L^p (E, d\mu).$$

We shall need the following two results.
\begin{prop}
The functional $||\cdot||^\star_{p, q}$ is a quasi-norm {\rm {(it satisfies all properties of a norm except the triangle inequality)}} on $L (p, q)$. This functional is a norm if $1\leq q \leq p < \infty$ or $p=q=\infty.$ 
%Moreover, for all $1\leq p, q < \infty,$ we have
%$$|\int_{E} f(x)g(x)d\mu (x)| \leq ||f||^\star_{p, q} ||g||^\star_{p', q'} \quad (f\in L (p, q), \hskip 2truemm g\in L (p', q')).$$
%Here $p'$ and $q'$ are the respective conjugate exponents of $p$ and $q.$
\end{prop}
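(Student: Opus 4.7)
The plan is to verify the quasi-norm axioms in full generality, then to upgrade to the triangle inequality with constant one under the stated extra hypotheses.

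For the quasi-norm axioms, non-negativity and absolute homogeneity are inherited from the identity $(\lambda f)^\star = |\lambda|\, f^\star$, while definiteness follows from the fact that on a $\sigma$-finite non-atomic measure space $f^\star \equiv 0$ on $(0, \infty)$ forces $f = 0$ almost everywhere. For the quasi-triangle inequality I would start from the subadditivity of the distribution function,
$$\mu_{f+g}(\lambda_1 + \lambda_2) \leq \mu_f(\lambda_1) + \mu_g(\lambda_2),$$
which upon passage to the generalized inverse yields $(f+g)^\star(t) \leq f^\star(t/2) + g^\star(t/2)$. Inserting this into the defining integral, applying Minkowski's inequality in $L^q((0,\infty), dt/t)$ (licit because $q \geq 1$), and rescaling by $2$ deliver $\|f+g\|^\star_{p,q} \leq 2^{1/p}(\|f\|^\star_{p,q} + \|g\|^\star_{p,q})$, with the obvious modification when $q = \infty$.

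For the norm upgrade, the case $p = q = \infty$ is immediate since $\|f\|^\star_{\infty,\infty} = \|f\|_{L^\infty}$. For $1 \leq q \leq p < \infty$, my preferred route is to apply Fubini (writing $f^\star(t)^q = q\int_0^{f^\star(t)}\lambda^{q-1}d\lambda$ and inverting the order of integration) to obtain the equivalent expression
$$\|f\|^\star_{p,q} = p^{1/q}\left(\int_0^\infty \bigl(\lambda\, \mu_f(\lambda)^{1/p}\bigr)^q\, \frac{d\lambda}{\lambda}\right)^{1/q},$$
and then to derive the triangle inequality from a finer combination of the distribution-function subadditivity, the concavity of $x \mapsto x^{1/p}$, and Minkowski's integral inequality in $L^q(d\lambda/\lambda)$. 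The hypothesis $q \leq p$ is precisely what allows the successive inequalities to be assembled with sharp constant one; for $q > p$ only the quasi-norm bound survives.

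The main obstacle is exactly this sharp version of the triangle inequality when $1 \leq q \leq p$. The symmetric splitting $\lambda = \lambda/2 + \lambda/2$ used in the quasi-norm part only produces constant $2^{1/p}$, so obtaining constant one requires a non-symmetric, $f,g$-dependent splitting combined with a careful application of Minkowski. An alternative route that avoids optimizing the splitting is to pass to the Hunt functional built from the maximal rearrangement $f^{\star\star}(t) = t^{-1}\int_0^t f^\star(s)ds$: the subadditivity $(f+g)^{\star\star} \leq f^{\star\star} + g^{\star\star}$ (immediate from Theorem \ref{HL}) gives the triangle inequality at once by Minkowski, and a Hardy-type inequality then relates this equivalent functional back to $\|\cdot\|^\star_{p,q}$ with sharp constant one in the range $q \leq p$.
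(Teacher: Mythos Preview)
The paper does not supply its own proof of this proposition; it is stated as background with references to \cite{SW,H,BS,BL}. Your treatment of the quasi-norm axioms and of the case $p=q=\infty$ is standard and correct.

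The genuine gap is in your argument that $\|\cdot\|^\star_{p,q}$ satisfies the triangle inequality when $1\leq q\leq p<\infty$. Your alternative route via $f^{\star\star}$ establishes the triangle inequality for the \emph{double-star} functional $\|\cdot\|_{p,q}$, but this does not transfer back to $\|\cdot\|^\star_{p,q}$. The Hardy inequality of Lemma~2.8 reads $\|f\|^\star_{p,q}\leq\|f\|_{p,q}\leq\frac{p}{p-1}\|f\|^\star_{p,q}$, and the inequalities go the wrong way: from $\|f+g\|_{p,q}\leq\|f\|_{p,q}+\|g\|_{p,q}$ you obtain only $\|f+g\|^\star_{p,q}\leq\|f\|_{p,q}+\|g\|_{p,q}$, and the right side is in general strictly larger than $\|f\|^\star_{p,q}+\|g\|^\star_{p,q}$. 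There is no ``sharp constant one'' relation between the two functionals in the range $q\leq p$; for instance, for $f=\chi_E$ with $\mu(E)=1$ one computes directly that $\|f\|_{p,1}=\frac{p}{p-1}\|f\|^\star_{p,1}$. Your first approach, via the distribution-function formula and a ``finer combination'' of subadditivity, concavity and Minkowski, is left as a stated obstacle rather than an argument, and the symmetric splitting you describe indeed only yields the constant $2^{1/p}$.

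A correct route, found in the cited references, exploits the fact that for $q\leq p$ the weight $t^{q/p-1}$ is nonincreasing. For $q=1$ one writes $\|f\|^\star_{p,1}=\int_0^\infty f^\star(t)\,t^{1/p-1}\,dt$ and uses the Hardy--Littlewood inequality (Theorem~\ref{HL}) together with nonatomicity to realise this integral as a supremum of linear functionals $\int_E|f|g\,d\mu$ over $g\geq 0$ with $g^\star(t)=t^{1/p-1}$; subadditivity is then immediate. For $1<q\leq p$ one needs an additional step (see for instance Lorentz's original argument or \cite[Chapter~4]{BS}): the key observation is that the majorization $\int_0^s(f+g)^\star\leq\int_0^s f^\star+\int_0^s g^\star$, combined with the monotonicity of the weight, allows one to control $\int_0^\infty((f+g)^\star)^q t^{q/p-1}\,dt$ by the corresponding quantity with $(f+g)^\star$ replaced by $f^\star+g^\star$, after which Minkowski in $L^q(t^{q/p-1}dt)$ finishes the proof. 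The hypothesis $q\leq p$ enters precisely through the sign of $q/p-1$, not through any sharp Hardy constant.
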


\begin{lemma}
Let $1\leq p < \infty.$ Then for every measurable function $f$ on $(E , \mu),$ we have
$$||f||_{p, \infty}^\star = \sup \limits_{\lambda >0} \lambda  \mu_f (\lambda)^{\frac 1p}.$$
\end{lemma}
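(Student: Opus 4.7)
Write $A := \|f\|_{p,\infty}^\star = \sup_{t>0} t^{1/p} f^\star(t)$ and $B := \sup_{\lambda>0} \lambda \mu_f(\lambda)^{1/p}$. The plan is to establish $A=B$ by a pair of one-line inequalities, each coming directly from the defining infimum of $f^\star$ together with the monotonicity of $\mu_f$. The only preliminary point is the duality relation
\[
 t < \mu_f(\lambda) \;\Longleftrightarrow\; \lambda < f^\star(t),
\]
which I would derive as follows. If $\lambda < f^\star(t)$, the infimum definition gives $\mu_f(\lambda) > t$ (no $\lambda' \le \lambda$ can satisfy $\mu_f(\lambda') \le t$). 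Conversely, if $\mu_f(\lambda) > t$, then by monotonicity $\mu_f(\lambda') \ge \mu_f(\lambda) > t$ for every $\lambda' \le \lambda$, so the set whose infimum defines $f^\star(t)$ is contained in $(\lambda,\infty)$, hence $f^\star(t) \ge \lambda$; a standard approximation then upgrades this to $f^\star(t) > \lambda$ (if $f^\star(t)=\lambda$ were possible, one would pick $\lambda'\in(\lambda,f^\star(t)+\varepsilon)$ with $\mu_f(\lambda')\le t$ and use monotonicity to contradict $\mu_f(\lambda)>t$ for $\varepsilon$ small).

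Inequality $A \le B$: fix $t>0$ and pick any $\lambda$ with $0<\lambda<f^\star(t)$. The duality just established gives $\mu_f(\lambda) > t$, hence
\[
\lambda\, t^{1/p} \;\le\; \lambda\, \mu_f(\lambda)^{1/p} \;\le\; B.
\]
Letting $\lambda \nearrow f^\star(t)$ yields $t^{1/p} f^\star(t) \le B$, and taking the supremum over $t>0$ produces $A \le B$.

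Inequality $B \le A$: fix $\lambda > 0$ with $\mu_f(\lambda) > 0$ and pick $t$ with $0<t<\mu_f(\lambda)$. The same duality gives $f^\star(t) > \lambda$, whence
\[
\lambda\, t^{1/p} \;\le\; t^{1/p} f^\star(t) \;\le\; A.
\]
Letting $t \nearrow \mu_f(\lambda)$ gives $\lambda\, \mu_f(\lambda)^{1/p} \le A$, and the supremum over $\lambda$ yields $B \le A$. (If $\mu_f(\lambda)=0$ for every $\lambda>0$, then $B=0$; if $\mu_f(\lambda)=\infty$ for some $\lambda$, both sides are infinite by the same argument applied to arbitrary $t$.)

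The argument is entirely elementary; the only genuine step is the correct formulation and verification of the duality $\{t : f^\star(t) > \lambda\} = [0,\mu_f(\lambda))$, which is where one must be careful with strict vs.\ non-strict inequalities and with the right-continuity of $\mu_f$. Once that equivalence is in hand, both inequalities reduce to a monotone limit in a single variable, and no interchange of supremum and limit is required.
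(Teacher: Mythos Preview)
The paper does not supply a proof of this lemma; it is recorded as a standard fact drawn from the references on Lorentz spaces (Stein--Weiss, Hunt, Bennett--Sharpley). Your argument is correct and is the standard one.

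One small imprecision: in the parenthetical justifying the strict inequality $f^\star(t) > \lambda$ from $\mu_f(\lambda) > t$, you invoke ``monotonicity'' to derive a contradiction, but monotonicity of $\mu_f$ alone only yields $\mu_f(\lambda') \le \mu_f(\lambda)$ for $\lambda' > \lambda$, which is not contradictory. The upgrade from $f^\star(t) \ge \lambda$ to $f^\star(t) > \lambda$ genuinely requires the right-continuity of $\mu_f$ (which you do acknowledge in your closing paragraph): since $\mu_f(\lambda) > t$ and $\mu_f(\lambda') \to \mu_f(\lambda)$ as $\lambda' \downarrow \lambda$, one has $\mu_f(\lambda + \varepsilon) > t$ for all small $\varepsilon > 0$, whence the infimum defining $f^\star(t)$ is at least $\lambda + \varepsilon > \lambda$. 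With that clarification the proof is complete.
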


%\begin{lemma}\label{2.7}
%Let $g$ be a positive measurable function on $(E , \mu)$ and let $F$ be a measurable subset of $E$ of bounded measure $\mu (F).$ Then 
%$$\int_F g(x)d\mu(x) \leq \int_0^{\mu (F)} g^\star (s)ds.$$
%Moreover, for every positive number $t,$ there exists a measurable subset $E_t$ of $E$ satisfying $\mu (E_t) = t$ such that
%$$\int_{E_t} g(x)d\mu(x) = \int_0^{t} g^\star (s)ds.$$
%\end{lemma}

We next define a norm on $L (p, q)$ which is equivalent to the quasi-norm $||\cdot||^\star_{p, q}.$ For a measurable function $f$ on $(E , \mu),$ we define the function $f^{\star \star}$ on $[0, \infty)$ by 
$$f^{\star \star} (t) = \frac 1t \int_0^t f^\star (u)du.$$
For $1\leq p, q \leq \infty,$ we define the functional $||\cdot||_{p, q}$ by
$$||f||_{p, q}= \left (\int_0^\infty \left (t^{\frac 1p} f^{\star \star} (t)\right )^q \frac {dt}t\right )^{\frac 1q}  \quad \rm {if} \quad 1\leq p < \infty \quad \rm {and} \quad 1\leq q < \infty$$
(resp.
$$||f||_{p, \infty} = \sup \limits_{t>0} \hskip 1truemm t^{\frac 1p} f^{\star \star} (t)  \quad \rm {if} \quad 1\leq p \leq \infty).$$

\begin{prop}
Let $1< p \leq \infty$ and $1\leq q \leq \infty.$ The functional $||\cdot||_{p, q}$ is a norm on $L (p, q).$ 
\end{prop}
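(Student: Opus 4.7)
The plan is to verify the norm axioms for $\|\cdot\|_{p,q}$. Positivity, homogeneity, and definiteness are routine and inherit directly from the corresponding properties of the rearrangement: $(\lambda f)^\star = |\lambda| f^\star$ propagates to $f^{\star\star}$, and $f^{\star\star}\equiv 0$ forces $f^\star\equiv 0$, hence $f=0$ almost everywhere. The real content is the triangle inequality, and the crucial intermediate step I would isolate is the pointwise subadditivity
$$(f+g)^{\star\star}(t)\leq f^{\star\star}(t)+g^{\star\star}(t),\qquad t>0.$$
This is precisely the property that separates $f^{\star\star}$ from $f^\star$ and justifies introducing this second functional.

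To establish subadditivity I would first prove the extremal characterization
$$tf^{\star\star}(t)=\int_0^t f^\star(s)\,ds=\sup\left\{\int_F|f|\,d\mu \; : \; F\subset E \text{ measurable},\; \mu(F)\leq t\right\}.$$
The upper bound is immediate from the Hardy--Littlewood Theorem \ref{HL} applied with $g=\chi_F$. For the reverse inequality I would exploit the non-atomicity of $(E,\mu)$: given $\varepsilon>0$, the super-level sets $\{|f|>\lambda\}$ can be sliced to produce a measurable $F_t$ with $\mu(F_t)=t$ on which $|f|$ essentially captures its largest values, so that $\int_{F_t}|f|\,d\mu\geq\int_0^t f^\star(s)\,ds-\varepsilon$. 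Once this supremum representation is in hand, subadditivity is formal: for each $F$ with $\mu(F)\leq t$,
$$\int_F|f+g|\,d\mu\leq\int_F|f|\,d\mu+\int_F|g|\,d\mu\leq tf^{\star\star}(t)+tg^{\star\star}(t),$$
and taking the supremum over such $F$ on the left and dividing by $t$ yields the claim.

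With pointwise subadditivity of $f\mapsto f^{\star\star}$ secured, the triangle inequality for $\|\cdot\|_{p,q}$ reduces to Minkowski's inequality. In the case $1\leq q<\infty$, the functional $\|f\|_{p,q}$ is the norm of $t\mapsto t^{1/p}f^{\star\star}(t)$ in $L^q((0,\infty),\tfrac{dt}{t})$, so
$$\|f+g\|_{p,q}\leq\left\|t^{1/p}(f^{\star\star}+g^{\star\star})\right\|_{L^q(dt/t)}\leq\|f\|_{p,q}+\|g\|_{p,q}$$
by the classical Minkowski inequality applied to the positive functions $t^{1/p}f^{\star\star}(t)$ and $t^{1/p}g^{\star\star}(t)$. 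The case $q=\infty$ follows the same way, replacing the $L^q$-norm by the supremum. The role of the hypothesis $p>1$ is to guarantee that the weight $t^{1/p}f^{\star\star}(t)$ is controlled by $t^{1/p}f^\star(t)$ via Hardy's inequality, which in turn gives the equivalence between $\|\cdot\|_{p,q}$ and the quasi-norm $\|\cdot\|^\star_{p,q}$ and ensures the norm is finite on $L(p,q)$.

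The main obstacle is the extremal characterization of $tf^{\star\star}(t)$, because it is the single place where the non-atomicity hypothesis on $(E,\mu)$ is genuinely used. Everything else is a formal consequence of that identity combined with Minkowski's inequality.
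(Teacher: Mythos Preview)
The paper does not supply a proof of this proposition: it is stated as a known fact in the overview Section~2, with the proofs deferred to the standard references \cite{SW}, \cite{H}, \cite{BS}, \cite{BL}. Your argument is correct and is precisely the classical one found in those sources (e.g.\ Bennett--Sharpley): the key point is the subadditivity of $f\mapsto f^{\star\star}$, obtained from the extremal formula $t f^{\star\star}(t)=\sup\{\int_F |f|\,d\mu:\mu(F)\le t\}$, after which the triangle inequality is Minkowski in $L^q((0,\infty),dt/t)$; and you correctly identify that $p>1$ enters only through Hardy's inequality (the paper's Lemma~2.8) to guarantee finiteness of $\|\cdot\|_{p,q}$ on $L(p,q)$.
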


\begin{lemma}
Let $1< p \leq \infty$ and $1\leq q \leq r \leq \infty.$ Then for every measurable function $f$ on $(E , \mu),$ we have
$$||f||_{p, r} \leq ||f||_{p, q}$$
and
$$||f||^\star_{p, r} \leq ||f||^\star_{p, q}.$$
\end{lemma}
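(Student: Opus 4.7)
The plan is to exploit the monotonicity of the decreasing rearrangement $f^\star$ (and of $f^{\star\star}$) in order to convert integral information over a small interval into pointwise control, and then to bootstrap via a one-line interpolation step. Both functions in play are non-increasing on $(0,\infty)$: $f^\star$ by construction, and $f^{\star\star}$ because it is a running mean of $f^\star$.

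First I would handle the case $r=\infty$. Since $f^\star$ is non-increasing, for every $t>0$ we have $f^\star(s)\geq f^\star(t)$ whenever $0<s\leq t$, and hence
$$\|f\|^{\star\, q}_{p,q} \;\geq\; \int_0^t \bigl(s^{1/p} f^\star(s)\bigr)^q\,\frac{ds}{s} \;\geq\; f^\star(t)^q\int_0^t s^{q/p-1}\,ds \;=\; \frac{p}{q}\,t^{q/p}\,f^\star(t)^q.$$
Taking $q$-th roots and then the supremum over $t>0$ converts this into the pointwise bound
$$t^{1/p} f^\star(t)\;\leq\;\left(\frac{q}{p}\right)^{1/q}\|f\|^\star_{p,q},$$
which is precisely the desired estimate for $\|f\|^\star_{p,\infty}$.

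Next, for $q\leq r<\infty$, I would peel off one factor and apply the pointwise bound just obtained:
$$\|f\|^{\star\, r}_{p,r}\;=\;\int_0^\infty \bigl(s^{1/p} f^\star(s)\bigr)^{r-q}\bigl(s^{1/p} f^\star(s)\bigr)^q\,\frac{ds}{s}\;\leq\;\bigl(\|f\|^\star_{p,\infty}\bigr)^{r-q}\|f\|^{\star\, q}_{p,q}.$$
Substituting the Step~1 control of $\|f\|^\star_{p,\infty}$ and taking the $r$-th root produces the first inequality of the lemma.

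The second inequality, for the norms $\|\cdot\|_{p,q}$ built from $f^{\star\star}$, follows from the identical chain of estimates: the only property used is that the function under the integrand is non-increasing, and this holds for $f^{\star\star}$ because a direct differentiation gives $\frac{d}{dt}f^{\star\star}(t)=\frac{1}{t}\bigl(f^\star(t)-f^{\star\star}(t)\bigr)\leq 0$, the last inequality following from $f^{\star\star}(t)\geq f^\star(t)$ (the average of $f^\star$ over $(0,t)$ is at least the value at the right endpoint). The only subtlety in the whole argument is the $r=\infty$ case, which must be proved first so as to supply the pointwise bound that drives the interpolation step; once that is in place the rest is routine.
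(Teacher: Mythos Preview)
The paper does not supply its own proof of this lemma; it is recorded in Section~2 as a standard background fact with implicit reference to \cite{SW}, \cite{H}, \cite{BS}, \cite{BL}. Your argument is precisely the classical one found in those references (compare for instance Bennett--Sharpley, Chapter~IV, Proposition~4.2): first extract the weak-type bound from monotonicity of $f^\star$, then split $(t^{1/p}f^\star(t))^r=(t^{1/p}f^\star(t))^{r-q}(t^{1/p}f^\star(t))^q$ and absorb the first factor by the sup-norm. The extension to $f^{\star\star}$ via its monotonicity is also standard and correctly justified.

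One small observation: your computation actually produces
\[
\|f\|^\star_{p,r}\;\le\;\Bigl(\tfrac{q}{p}\Bigr)^{\frac1q-\frac1r}\|f\|^\star_{p,q},
\]
and this constant exceeds $1$ whenever $q>p$. So the lemma as stated in the paper (with constant $1$) is not literally what you have proved, and indeed is not literally true in that generality. But the continuous embedding $L(p,q)\hookrightarrow L(p,r)$ that your argument establishes is exactly what the paper needs later (notably in the proof of Lemma~3.11, where only the case $r=\infty$ is invoked), so there is no substantive gap.
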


\noindent
The following lemma asserts that the norm $||\cdot||_{p, q}$ is equivalent to the quasi-norm $||\cdot||^\star_{p, q}$ when $1< p \leq \infty$ and $1\leq q \leq \infty.$ 

\begin{lemma}
For all $1< p< \infty,  \hskip 2truemm 1\leq q \leq \infty$ and for every $f\in L_\nu (p, q),$ we have
$$||f||^\star_{p, q} \leq ||f||_{p, q} \leq \frac p{p-1}||f||^\star_{p, q}.$$
\end{lemma}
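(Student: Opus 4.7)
The plan has two parts, matching the two inequalities.

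For the left inequality $\|f\|^\star_{p,q} \le \|f\|_{p,q}$, I would observe that $f^\star$ is non-increasing by construction, so for every $t>0$ one has $f^\star(u) \geq f^\star(t)$ for $u \in [0,t]$, which gives $f^{\star\star}(t) = \frac{1}{t}\int_0^t f^\star(u)\,du \geq f^\star(t)$. Substituting this pointwise inequality into the definitions (inside the integral for $q<\infty$, inside the supremum for $q=\infty$) yields the claim immediately.

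For the right inequality, the heart of the matter is the classical Hardy inequality: for $p>1$, $1 \le q < \infty$, and any non-negative measurable $g$ on $(0,\infty)$,
\begin{equation*}
\left(\int_0^\infty \left(\frac{1}{t}\int_0^t g(u)\,du\right)^q t^{q/p-1}\,dt\right)^{1/q} \leq \frac{p}{p-1}\left(\int_0^\infty g(t)^q t^{q/p-1}\,dt\right)^{1/q}.
\end{equation*}
Applied to $g=f^\star$, the left-hand side is exactly $\|f\|_{p,q}$ after rewriting $t^{1/p}f^{\star\star}(t)=t^{1/p-1}\int_0^t f^\star(u)\,du$, and the right-hand side is $\frac{p}{p-1}\|f\|^\star_{p,q}$. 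This settles the range $1 \le q < \infty$.

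For $q=\infty$, a direct argument suffices. From the definition of $\|f\|^\star_{p,\infty}$ one has $f^\star(u) \le u^{-1/p}\,\|f\|^\star_{p,\infty}$ for every $u>0$, hence
\begin{equation*}
\int_0^t f^\star(u)\,du \leq \|f\|^\star_{p,\infty}\int_0^t u^{-1/p}\,du = \frac{p}{p-1}\,t^{1-1/p}\,\|f\|^\star_{p,\infty}.
\end{equation*}
Multiplying by $t^{1/p-1}$ and taking the supremum over $t>0$ yields $\|f\|_{p,\infty} \le \frac{p}{p-1}\|f\|^\star_{p,\infty}$, which completes the proof.

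The main obstacle, if one wishes to be fully self-contained, is the Hardy inequality itself, but since this is a standard result (see the references \cite{BS}, \cite{SW} already cited in this section) I would simply invoke it. The only subtlety worth a remark is the strict requirement $p>1$: the constant $\frac{p}{p-1}$ blows up as $p \to 1^+$, reflecting the fact that $L(1,q)$ for $q>1$ is not normable via $\|\cdot\|_{1,q}$ in the same way.
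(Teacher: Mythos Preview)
Your argument is correct and is precisely the standard proof: the lower bound from the monotonicity of $f^\star$, the upper bound from Hardy's inequality (with the direct computation for $q=\infty$). The paper itself does not supply a proof of this lemma but merely states it as a known fact, referring the reader to \cite{SW}, \cite{H}, \cite{BS}, \cite{BL}; your write-up is exactly the argument one finds in those references.
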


We now state the following theorem.

\begin{thm}
Let $1< p < \infty, \hskip 1truemm 1\leq q \leq \infty.$  Equipped with the norm $||\cdot||_{p, q},$ the Lorentz space $L (p, q)$ is a Banach space.
\end{thm}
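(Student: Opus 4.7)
The preceding proposition already shows that $\|\cdot\|_{p,q}$ is a norm on $L(p,q)$, so only completeness needs verification. My strategy is the classical one for function spaces of this type: from a Cauchy sequence I would extract a subsequence converging almost everywhere, identify the candidate limit, and then conclude via a Fatou property of the non-increasing rearrangement.

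Let $(f_n)$ be a Cauchy sequence for $\|\cdot\|_{p,q}$. Combining the monotonicity $\|\cdot\|^\star_{p,\infty}\le\|\cdot\|^\star_{p,q}$ (the $r=\infty$ case of the preceding lemma), the equivalence $\|\cdot\|^\star_{p,q}\le\|\cdot\|_{p,q}$, and the identity $\|g\|^\star_{p,\infty}=\sup_{\lambda>0}\lambda\,\mu_g(\lambda)^{1/p}$ already recorded in the section, I would obtain the Chebyshev-type estimate
$$\mu_{f_n-f_m}(\lambda)\,\le\,\lambda^{-p}\,\|f_n-f_m\|_{p,q}^{p}\qquad(\lambda>0),$$
so $(f_n)$ is Cauchy in measure. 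Since $(E,\mu)$ is $\sigma$-finite, a subsequence $(f_{n_k})$ then converges $\mu$-a.e.\ to some measurable function $f$.

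Next I would establish the following Fatou-type property of the rearrangement: if $g_k\to g$ $\mu$-a.e., then $g^\star(t)\le\liminf_k g_k^\star(t)$ for every $t\ge 0$. Indeed, on the full-measure set of convergence one has $\{|g|>\lambda\}\subset\liminf_k\{|g_k|>\lambda\}$, so Fatou's lemma applied to the indicator functions yields $\mu_g(\lambda)\le\liminf_k\mu_{g_k}(\lambda)$; the definition $g^\star(t)=\inf\{\lambda:\mu_g(\lambda)\le t\}$ transfers this bound to the rearrangement. Applying it with $g_k=f_{n_k}-f_m$ and $g=f-f_m$ for $m$ fixed, Fatou's lemma on $(0,\infty)$ (or, when $q=\infty$, the elementary inequality $\sup\liminf\le\liminf\sup$) gives
$$\|f-f_m\|^\star_{p,q}\,\le\,\liminf_k\|f_{n_k}-f_m\|^\star_{p,q}\,\le\,\liminf_k\|f_{n_k}-f_m\|_{p,q}.$$
The Cauchy property makes the right-hand side arbitrarily small for $m$ large, and the equivalence of norms then yields $\|f-f_m\|_{p,q}\le\tfrac{p}{p-1}\|f-f_m\|^\star_{p,q}\to 0$. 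Since $f=(f-f_m)+f_m$ lies in $L(p,q)$ for any such $m$, the full sequence $(f_n)$ converges to $f$ in $\|\cdot\|_{p,q}$.

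The main obstacle is the Fatou property of the non-increasing rearrangement, which is not explicitly stated in the excerpt but reduces to the measure-theoretic argument sketched above; once it is in hand the rest is bookkeeping. A small amount of care is also needed to accommodate $q=\infty$, where the integral in the definition of $\|\cdot\|^\star_{p,q}$ is replaced by a supremum, but the same scheme applies without change.
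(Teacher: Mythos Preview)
The paper does not supply its own proof of this theorem: it is stated in the preliminary Section~2 as a known fact, with the blanket citation ``We refer to \cite{SW}, \cite{H}, \cite{BS} and \cite{BL}'' covering all the results of that section. Your argument is the standard one found in those references (notably Hunt and Bennett--Sharpley): a Cauchy sequence is Cauchy in measure via the weak-$L^p$ Chebyshev bound, a subsequence converges a.e., and the Fatou property of the rearrangement closes the loop. The proof is correct as written, including the handling of $q=\infty$; the only cosmetic point is that the $\sigma$-finiteness hypothesis is not actually needed for the Borel--Cantelli extraction of the a.e.\ convergent subsequence, though it does no harm.
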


We shall also need the following proposition.

\begin{prop}\label{(2.12)}
Let $1< p < \infty$ and $1\leq q \leq \infty.$ Every Cauchy sequence in the Lorentz space $(L (p, q), ||\cdot||_{p, q})$ contains a subsequence which converges a.e. to its limit in $L (p, q).$ 
\end{prop}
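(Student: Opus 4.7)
The strategy is to reduce to a classical Borel--Cantelli argument by extracting from the Lorentz norm a Chebyshev-type control on the distribution function. Chaining $||f||^\star_{p,\infty}\leq ||f||^\star_{p,q}\leq ||f||_{p,q}$ (from Lemmas 2.9 and 2.11) with the identity $||f||^\star_{p,\infty}=\sup_{\lambda>0}\lambda\,\mu_f(\lambda)^{1/p}$ of Lemma 2.8 yields
$$\lambda\,\mu_f(\lambda)^{1/p}\leq ||f||_{p,q}\qquad\text{for every }\lambda>0,$$
so that the norm $||\cdot||_{p,q}$ controls the distribution function exactly as the $L^p$-norm does through Chebyshev's inequality.

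Given a Cauchy sequence $\{f_n\}$ in $(L(p,q),||\cdot||_{p,q})$, I would extract a subsequence $\{f_{n_k}\}$ with $||f_{n_{k+1}}-f_{n_k}||_{p,q}\leq 2^{-k}$. Inserting $\lambda_k=2^{-k/2}$ in the estimate above gives $\mu_{f_{n_{k+1}}-f_{n_k}}(2^{-k/2})\leq 2^{-kp/2}$, and since $p>1$ the series $\sum_k \mu_{f_{n_{k+1}}-f_{n_k}}(2^{-k/2})$ converges. The Borel--Cantelli lemma then shows that, for $\mu$-almost every $x\in E$, one has $|f_{n_{k+1}}(x)-f_{n_k}(x)|\leq 2^{-k/2}$ for all but finitely many $k$. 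Consequently the telescoping series $\sum_k(f_{n_{k+1}}-f_{n_k})$ converges absolutely a.e., and $\{f_{n_k}\}$ converges a.e.\ to some measurable function $g$.

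To identify $g$ with the correct limit, note that by Theorem 2.10 the full Cauchy sequence has a norm limit $f\in L(p,q)$, so in particular $||f_{n_k}-f||_{p,q}\to 0$. Applying the Chebyshev-type inequality to $f_{n_k}-f$ gives $\mu_{f_{n_k}-f}(\lambda)\to 0$ for every $\lambda>0$; combined with the a.e.\ convergence $f_{n_k}\to g$, a standard Egoroff argument (using the $\sigma$-finiteness of $\mu$ to reduce to subsets of finite positive measure) forces $f=g$ a.e., which is the desired conclusion.

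The main obstacle, as far as I can see, is conceptual rather than computational: one must correctly chain Lemmas 2.8, 2.9 and 2.11 to produce the Chebyshev-type estimate valid for the whole range $1<p<\infty$, $1\leq q\leq\infty$, and then handle the final identification of the two candidate limits carefully, since $(E,\mu)$ is only $\sigma$-finite. Once the distribution-function estimate is in hand, all remaining steps are routine.
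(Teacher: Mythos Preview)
The paper does not supply a proof of this proposition; it is stated in Section~2 as a standard fact about Lorentz spaces, with the blanket references to \cite{SW}, \cite{H}, \cite{BS}, \cite{BL} given at the start of the section. There is therefore nothing to compare your argument against.

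Your proof is correct. The chain $\lambda\,\mu_f(\lambda)^{1/p}\leq \|f\|^\star_{p,\infty}\leq \|f\|^\star_{p,q}\leq \|f\|_{p,q}$ is valid for the full range $1<p<\infty$, $1\leq q\leq\infty$, and once you have it the Borel--Cantelli extraction and the identification of the a.e.\ limit with the norm limit are routine. One small remark on the last step: you do not really need Egoroff. The Chebyshev-type bound shows $f_{n_k}\to f$ in measure globally, while a.e.\ convergence $f_{n_k}\to g$ gives convergence in measure on each piece of a $\sigma$-finite exhaustion; uniqueness of limits in measure then yields $f=g$ a.e.\ directly.

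A purely cosmetic point: your internal references are shifted relative to the paper's numbering. The identity $\|f\|^\star_{p,\infty}=\sup_{\lambda>0}\lambda\,\mu_f(\lambda)^{1/p}$ is Lemma~2.5, the monotonicity in the second index is Lemma~2.7, the comparison $\|f\|^\star_{p,q}\leq\|f\|_{p,q}$ is Lemma~2.8, and completeness of $L(p,q)$ is Theorem~2.9. None of this affects the argument.
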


%\subsection{Density in  Lorentz spaces.}  We recall the following density theorem.

%\begin{thm}\cite[Theorem 2.9]{BS}
%Suppose that 

%\end{thm}

%By simple function, we mean a function which can be written in the form
%\begin{equation*}
%f(x) = \sum_{j=1}^N c_j \chi_{E_j} (x),
%\end{equation*}
%where $c_1,\cdots,c_N$ are complex numbers, $E_1,\cdots,E_N$ are pairwise disjoint sets of finite measure and $\chi_E$ denotes the characteristic function of the set $E.$\\ 
%We recall the following theorem (cf. e.g. \cite{H}).

%\begin{thm}
%Let $1\leq p <  \infty$ and $1\leq q < \infty.$ The subspace of simple functions is dense in the Banach space $(L (p, q), ||\cdot||_{p, q}).$

%\end{thm}

We finally record the following duality theorem \cite{H}.

\begin{thm}\label{duality}
$(1)$ \hskip 2truemm
Let $1\leq p <\infty.$ The topological dual space $(L (p, 1))'$ of the Lorentz space $L (p, 1)$ identifies with the Lorentz space $L (p', \infty)$ with respect to the duality pairing
$$(\star) \quad (f, g) = \int_{T_\Omega} f(z)\overline{g(z)}d\mu (z).$$
$(2)$ \hskip 2truemm
Let $1< p < \infty$  and $1< q < \infty.$ The topological dual space $(L (p, q))'$ of the Lorentz space $L (p, q)$ identifies with the Lorentz space $L (p', q')$ with respect to the same duality pairing $(\star).$
\end{thm}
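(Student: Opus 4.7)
The plan is to prove both inclusions of the identification in each case: $L(p', q') \hookrightarrow (L(p,q))'$ (the pairing is bounded) and $(L(p,q))' \hookrightarrow L(p', q')$ (the pairing is surjective), with the obvious modifications in case $(1)$.

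\textbf{Boundedness.} For $f \in L(p,q)$ and $g \in L(p',q')$, Theorem \ref{HL} gives
\[
|(f,g)| \leq \int_0^\infty f^\star(t)\, g^\star(t)\, dt
= \int_0^\infty \bigl(t^{1/p} f^\star(t)\bigr)\bigl(t^{1/p'} g^\star(t)\bigr)\,\frac{dt}{t}.
\]
Applying H\"older's inequality on $\bigl((0,\infty),\tfrac{dt}{t}\bigr)$ with conjugate exponents $q$ and $q'$, the right-hand side is controlled by $\|f\|^\star_{p,q}\|g\|^\star_{p',q'}$. In part $(1)$, with $q=1,q'=\infty$, one extracts the supremum $\sup_t t^{1/p'} g^\star(t)$ and keeps $t^{1/p} f^\star(t)$ inside an $L^1(dt/t)$-integral to obtain the same conclusion. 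Hence the pairing defines a continuous injection of $L(p',q')$ into $(L(p,q))'$.

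\textbf{Surjectivity.} Let $\Phi \in (L(p,q))'$. Since $\chi_E \in L(p,q)$ whenever $\mu(E) < \infty$, the set function $E \mapsto \overline{\Phi(\chi_E)}$ defines a complex measure on sets of finite measure which is absolutely continuous with respect to $\mu$. By the Radon-Nikodym theorem applied on a $\sigma$-finite exhaustion of $E$, there is a locally integrable $g$ satisfying
\[
\Phi(\chi_E) = \int_E \overline{g}\,d\mu \quad \text{for every such } E,
\]
and by linearity $\Phi(f) = (f,g)$ on all simple functions of bounded support. Introducing truncations $g_n = g\,\chi_{\{|g|\leq n\}\cap F_n}$ (with $F_n$ an increasing exhaustion of $E$ by sets of finite measure), each $g_n$ lies in every Lorentz space, and one bounds $\|g_n\|^\star_{p',q'} \leq C\|\Phi\|$ through the dual characterization of the Lorentz norm obtained by pairing against simple functions with $\|f\|^\star_{p,q}\leq 1$. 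Monotone convergence $g_n^\star \uparrow g^\star$ then yields $g \in L(p',q')$ (resp.\ $L(p',\infty)$ in part $(1)$) with norm $\lesssim \|\Phi\|$. The standing hypothesis $q < \infty$ guarantees that simple functions are dense in $L(p,q)$, so the identity $\Phi(f) = (f,g)$ persists throughout $L(p,q)$, and the two norms are equivalent.

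\textbf{Main obstacle.} The technical heart is the \emph{reverse} H\"older step needed to estimate $\|g_n\|^\star_{p',q'}$ from the operator norm of $\Phi$. For each truncation $g_n$ one must exhibit a near-extremal simple function $f$, typically of the form $\mathrm{sgn}(\overline{g_n})$ times an indicator (or layer cake combination of indicators) of super-level sets of $|g_n|$, whose Lorentz norm $\|f\|^\star_{p,q}$ nearly saturates the H\"older inequality paired with $g_n^\star$. The bookkeeping of the constants differs sharply between the case $q>1$, where one genuinely uses the finite conjugate exponent $q'$ in an integrated H\"older bound, and the case $q=1$, where one maximizes over a one-parameter family of level-set indicators and recovers precisely the supremum defining the $L(p',\infty)$ quasi-norm. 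This dichotomy is exactly what distinguishes parts $(1)$ and $(2)$ of the statement.
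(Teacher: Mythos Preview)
The paper does not prove this theorem at all: it is stated with the preamble ``We finally record the following duality theorem \cite{H}'' and no argument is given, the result being attributed to Hunt. Your proposal therefore supplies a proof where the paper offers only a citation. The outline you give --- Hardy--Littlewood plus H\"older for the embedding $L(p',q')\hookrightarrow (L(p,q))'$, and Radon--Nikodym followed by a truncation/near-extremal testing argument for the reverse inclusion --- is indeed the standard route found in the references the paper points to (Hunt, Bennett--Sharpley, Stein--Weiss), and your identification of the reverse-H\"older step as the technical crux is accurate. One small caveat: in the step where you claim $\|g_n\|^\star_{p',q'}\le C\|\Phi\|$ via ``the dual characterization of the Lorentz norm obtained by pairing against simple functions,'' you are implicitly invoking a norming lemma (that $\|h\|^\star_{p',q'}\simeq\sup\{|(f,h)|:\|f\|^\star_{p,q}\le 1,\ f\ \text{simple}\}$) which itself requires proof and is essentially equivalent in difficulty to the theorem; a fully self-contained argument would need to construct the near-extremal $f$ explicitly, as you hint at in your final paragraph.
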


\subsection{Interpolation via the real method between Lorentz spaces}
We begin with an overview of the theory of real interpolation between Banach spaces (cf. e.g. \cite{BS}).
\begin{defin} 
A pair $(X_0, X_1)$ of Banach  spaces is called a compatible couple if there is some Hausdorff topological vector space in which $X_0$ and $X_1$ are continuously embedded.
\end{defin}

\begin{defin}
Let $(X_0, X_1)$ be a compatible couple of Banach spaces. Denote $\overline {X}=X_0 + X_1.$ Let $t>0$ and $a\in \overline {X}.$ We define the functional  $K(t, a, \overline {X})$ by
$$K(t, a, \overline {X}) = \inf \hskip 2truemm \{||a_0||_{X_0} + t||a_1||_{X_1}: a=a_0 +a_1, a_0 \in X_0, a_1\in X_1\}.$$
For $0<\theta <1, \hskip 1truemm 1\leq q <\infty $ or $0\leq \theta \leq 1, \hskip 1truemm q=\infty,$ the real interpolation space between $X_0$ and $X_1$ is the space
$$[X_0, X_1]_{\theta, q} := \{a\in \overline {X}: ||a||_{\theta, q, \overline {X}}  < \infty\}$$
with  
$$||a||_{\theta, q, \overline {X}} := \left (\int_0^\infty \left (t^{-\theta} K(t, a, \overline {X})\right )^q \frac {dt}t\right )^{\frac 1q}$$
if $1\leq q< \infty$ (resp.
$$||a||_{\theta, \infty, \overline {X}} := \sup \limits_{t>0} t^{-\theta} K(t, a, \overline {X})$$
if $q=\infty).$
\end{defin}

\begin{prop}
For $0<\theta <1, \hskip 1truemm 1\leq q <\infty $ or $0\leq \theta \leq 1, \hskip 1truemm q=\infty,$ the functional $||\cdot||_{\theta, q, \overline {X}}$ is a norm on the real interpolation space $[X_0, X_1]_{\theta, q}.$ Endowed with this norm, $[X_0, X_1]_{\theta, q}$ is a Banach space.
\end{prop}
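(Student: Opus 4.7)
The plan is to first show that $K(t, \cdot, \overline{X})$ is, for each fixed $t > 0$, a seminorm on $\overline{X} = X_0 + X_1$, then lift this to a norm on $[X_0, X_1]_{\theta, q}$ via the weighted $L^q$ integration, and finally deduce completeness from a continuous embedding $[X_0, X_1]_{\theta, q} \hookrightarrow \overline{X}$ together with Fatou's lemma. Two elementary facts about the $K$-functional will do most of the work: (i) $t \mapsto K(t, a)$ is non-decreasing and concave, while $t \mapsto K(t, a)/t$ is non-increasing (both follow directly from the definition); (ii) $K$ is Lipschitz in its second argument in the sum norm, since $K(t, a - b) \leq \max(1, t) \, \|a - b\|_{\overline{X}}$, where $\|\cdot\|_{\overline{X}} = K(1, \cdot)$.

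The norm properties then go as follows. Homogeneity is immediate. For subadditivity, observe that $K(t, a+b) \leq K(t, a) + K(t, b)$ pointwise in $t$ (by adding decompositions of $a$ and $b$), then apply Minkowski's inequality in $L^q((0, \infty), dt/t)$ to conclude that $\|\cdot\|_{\theta, q, \overline{X}}$ is subadditive. For positive definiteness, if $\|a\|_{\theta, q, \overline{X}} = 0$ then $K(t, a) = 0$ for a.e.\ $t > 0$, hence for all $t > 0$ by continuity of $K(\cdot, a)$ (a consequence of concavity); in particular $\|a\|_{\overline{X}} = K(1, a) = 0$, and since the sum norm is a genuine norm on $\overline{X}$ (the compatibility hypothesis makes this rigorous, and $\overline{X}$ is standardly Banach as a quotient of $X_0 \oplus X_1$), we get $a = 0$. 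The case $q = \infty$ is analogous, with the integral replaced by a supremum.

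For completeness, I first establish an embedding estimate. From (i), for any fixed $t_0 > 0$ one has $K(t, a) \geq \min(1, t/t_0)\, K(t_0, a)$, and computing the integral of $(t^{-\theta} \min(1, t/t_0))^q \, dt/t$ explicitly gives
$$K(t_0, a) \leq C(\theta, q)\, t_0^{\theta}\, \|a\|_{\theta, q, \overline{X}}$$
for every $t_0 > 0$. Specializing to $t_0 = 1$ yields a continuous embedding $[X_0, X_1]_{\theta, q} \hookrightarrow \overline{X}$. Hence a Cauchy sequence $(a_n)$ in $[X_0, X_1]_{\theta, q}$ is Cauchy in the Banach space $\overline{X}$, and so $a_n \to a$ in $\overline{X}$ for some $a \in \overline{X}$. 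By the Lipschitz property (ii), for each fixed $t > 0$ one has $K(t, a_n - a_m) \to K(t, a_n - a)$ as $m \to \infty$. Fatou's lemma then yields
$$\|a_n - a\|_{\theta, q, \overline{X}} \leq \liminf_{m \to \infty} \|a_n - a_m\|_{\theta, q, \overline{X}},$$
and the right-hand side is arbitrarily small for $n$ large by the Cauchy hypothesis, proving $a_n \to a$ in $[X_0, X_1]_{\theta, q}$. (For $q = \infty$, replace Fatou with lower semi-continuity of the supremum.)

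The main obstacle is setting up the embedding inequality into $\overline{X}$; this is precisely where the monotonicity properties of $K(t, a)$ in $t$ are essential, and without them one cannot even conclude that the limit object $a$ in $\overline{X}$ exists. Once the embedding and the Lipschitz property of $K$ are in hand, everything else is a routine application of Minkowski's inequality and Fatou's lemma.
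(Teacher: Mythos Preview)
Your argument is correct and is essentially the standard textbook proof (as in Bennett--Sharpley or Bergh--L\"ofstr\"om). Note, however, that the paper does not actually supply its own proof of this proposition: it is stated as a background fact in the overview of real interpolation, with a reference to \cite{BS}. So there is nothing substantive to compare against; your proof simply fills in what the paper delegates to the literature, and it does so in the same way the cited references do.
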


%We recall the following density theorem.

%\begin{thm}\cite[Theorem 2.9]{BS}\label{densityth}
%Let $(X_0, X_1)$ be a compatible couple of Banach spaces and suppose $0<\theta <1, \quad 1\leq q <\infty.$ Then the subspace $X_0 \cap X_1$ is dense in $[X_0, X_1]_{\theta, q}.$
%\end{thm}

\begin{defin}
Let $(X_0, X_1)$ and $(Y_0, Y_1)$ be two compatible couples  of Banach spaces and let $T$ be a linear operator defined on $\overline {X} := X_0 + X_1 $ and taking values in $\overline {Y} := Y_0 + Y_1.$ Then $T$ is said to be admissible with respect to the couples $\overline {X}$ and $\overline {Y}$ if for $i=0, 1,$ the restriction of $T$ is a bounded operator from $X_i$ to $Y_i.$
\end{defin}

\begin{thm}
Let $(X_0, X_1)$ and $(Y_0, Y_1)$ be two compatible couples  of Banach spaces and let $0<\theta<1, \hskip 2truemm 1\leq q <\infty$ or \hskip 1truemm $0\leq \theta \leq 1, \hskip 2truemm q=\infty.$ Let $T$ be an admissible linear operator with respect to the couples $\overline {X}$ and $\overline {Y}$ such that
$$||Tf_i||_{Y_i} \leq M_i||f_i||_{X_i} \quad (f_i \in X_i, \hskip 2truemm i=0, 1).$$
Then $T$ is a bounded operator from $[X_0, X_1]_{\theta, q}$ to $[Y_0, Y_1]_{\theta, q}.$ More precisely, we have
$$||Tf||_{[Y_0, Y_1]_{\theta, q}} \leq M_0^{1-\theta}M_1^{\theta}||f||_{[X_0, X_1]_{\theta, q}}$$
for all $f\in [X_0, X_1]_{\theta, q}.$
\end{thm}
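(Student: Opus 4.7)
The plan is to derive the inequality at the level of the $K$-functional and then integrate. First, I would fix $a \in \overline{X} = X_0 + X_1$ and any admissible decomposition $a = a_0 + a_1$ with $a_0 \in X_0$, $a_1 \in X_1$. Since $T$ is linear and admissible, $Ta = Ta_0 + Ta_1$ is a decomposition of $Ta$ in $\overline{Y} = Y_0 + Y_1$, and by the hypothesis
\[
\|Ta_0\|_{Y_0} + t\|Ta_1\|_{Y_1} \leq M_0 \|a_0\|_{X_0} + tM_1 \|a_1\|_{X_1} = M_0 \left(\|a_0\|_{X_0} + \frac{tM_1}{M_0}\|a_1\|_{X_1}\right).
\]
Taking the infimum over all admissible decompositions on the right gives the key pointwise estimate
\[
K(t, Ta, \overline{Y}) \leq M_0 \, K\!\left(\tfrac{tM_1}{M_0}, a, \overline{X}\right) \qquad (t>0).
\]

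Next, I would plug this bound into the norm defining $[Y_0,Y_1]_{\theta,q}$. For $1 \leq q < \infty$,
\[
\|Ta\|_{[Y_0,Y_1]_{\theta,q}}^q = \int_0^\infty \!\bigl(t^{-\theta} K(t, Ta, \overline{Y})\bigr)^q \frac{dt}{t} \leq M_0^q \int_0^\infty \!\left(t^{-\theta} K\!\left(\tfrac{tM_1}{M_0}, a, \overline{X}\right)\right)^q \frac{dt}{t}.
\]
Performing the change of variables $s = tM_1/M_0$ (so $dt/t = ds/s$ and $t^{-\theta} = (M_1/M_0)^{\theta} s^{-\theta}$), the right-hand side becomes
\[
M_0^q \left(\tfrac{M_1}{M_0}\right)^{\theta q} \int_0^\infty \bigl(s^{-\theta} K(s, a, \overline{X})\bigr)^q \frac{ds}{s} = M_0^{(1-\theta)q} M_1^{\theta q} \, \|a\|_{[X_0,X_1]_{\theta,q}}^q,
\]
which yields the claimed inequality after taking $q$-th roots. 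The case $q = \infty$ is handled analogously: apply $\sup_{t>0} t^{-\theta}(\cdot)$ to both sides of the $K$-functional bound and make the same change of variable in the supremum.

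There is no serious obstacle here; the proof is a direct consequence of the sub-linear behavior of the $K$-functional under admissible operators, followed by a dilation in the integration variable. The one small point to watch is that the dilation is well-defined because $M_0, M_1 > 0$ (if either vanishes, $T$ vanishes on the corresponding $X_i$ and the inequality is trivial or requires interpreting $0^0$ as $0$ in the standard way), and that the estimate holds uniformly for every $a \in [X_0,X_1]_{\theta,q}$, which then shows both that $Ta$ lies in $[Y_0,Y_1]_{\theta,q}$ and gives the stated operator norm bound $M_0^{1-\theta} M_1^{\theta}$.
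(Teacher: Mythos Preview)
Your proof is correct and is exactly the standard argument via the $K$-functional and a dilation in the $t$-variable. Note, however, that the paper does not actually prove this theorem: it is stated in Section~2 as a known result from real interpolation theory (with references to \cite{BS} and \cite{BL}), so there is no ``paper's own proof'' to compare against beyond observing that your argument is the one found in those references.
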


The following theorem gives the real interpolation spaces between Lebesgue spaces and Lorentz spaces on the measure space $(E , \mu).$
\begin{thm}\label{(2.19)}
Let $0<\theta<1, \hskip 2truemm 1\leq q \leq\infty.$ Let $1\leq p_0 < p_1 \leq \infty$ and define the exponent $p$ by $\frac 1p = \frac {1-\theta}{p_0}+\frac \theta {p_1}.$  We have the identifications with equivalence of norms:
\begin{enumerate}
\item[a)]
$ [L_\nu^{p_0}, L_\nu^{p_1}]_{\theta, q}  = L_\nu (p, q);$
\item[b)]
$[L_\nu (p_0, q_0), L_\nu (p_1, q_1)]_{\theta, q} = L_\nu (p, q)$ for
$1<p_0 <p_1< \infty,\hskip 2truemm 1\leq q_0, q_1 \leq \infty.$
\end{enumerate}
\end{thm}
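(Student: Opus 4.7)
The plan is to reduce both assertions to the fundamental case $(p_0, p_1) = (1, \infty)$ and then apply a reiteration argument. The key preliminary step is Peetre's formula for the $K$-functional of the couple $(L_\nu^1, L_\nu^\infty)$:
$$K(t, f, L_\nu^1, L_\nu^\infty) = \int_0^t f^\star(s)\, ds = t f^{\star\star}(t).$$
To prove this, for a fixed $t>0$ I split $f = f_0 + f_1$ where $f_0 = \mathrm{sgn}(f)(|f|-f^\star(t))_+$ and $f_1 = f - f_0$. A direct rearrangement calculation gives $||f_0||_1 = \int_0^t (f^\star(s)-f^\star(t))\,ds$ and $||f_1||_\infty \leq f^\star(t)$, so that $||f_0||_1 + t||f_1||_\infty \leq t f^{\star\star}(t)$. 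For the reverse inequality, I apply the Hardy-Littlewood inequality (Theorem \ref{HL}) to any admissible decomposition $f=g_0+g_1$ to obtain $\int_0^t f^\star(s)\,ds \leq ||g_0||_1 + t||g_1||_\infty$, and pass to the infimum.

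Once this is established, the very definition of the real interpolation norm yields
$$||f||_{[L_\nu^1, L_\nu^\infty]_{\theta, q}}^q = \int_0^\infty \bigl(t^{1-\theta} f^{\star\star}(t)\bigr)^q \frac{dt}{t},$$
which, after setting $1/p = 1-\theta$, is precisely $||f||_{p, q}^q$. Thus $[L_\nu^1, L_\nu^\infty]_{\theta, q} = L_\nu(p, q)$ with equivalence of norms for every $1<p<\infty$ and $1\leq q\leq \infty$. This handles the endpoint case of part (a) directly.

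To obtain the general identifications in (a) and (b), I would invoke Holmstedt's reiteration theorem for real interpolation. The preceding step exhibits every $L_\nu^{p_i}$ and every $L_\nu(p_i, q_i)$ as an interpolation space between $L_\nu^1$ and $L_\nu^\infty$:
$$L_\nu^{p_i} = [L_\nu^1, L_\nu^\infty]_{1-1/p_i,\, p_i}, \qquad L_\nu(p_i, q_i) = [L_\nu^1, L_\nu^\infty]_{1-1/p_i,\, q_i}.$$
Reiteration then yields
$$[L_\nu^{p_0}, L_\nu^{p_1}]_{\theta, q} = [L_\nu(p_0, q_0), L_\nu(p_1, q_1)]_{\theta, q} = [L_\nu^1, L_\nu^\infty]_{\eta, q} = L_\nu(p, q),$$
where $\eta = (1-\theta)(1-1/p_0) + \theta(1-1/p_1) = 1-1/p$, proving both (a) and (b) simultaneously. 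Residual endpoint situations, such as $p_1 = \infty$ in (a), can be handled by a direct application of Holmstedt's formula for the $K$-functional of $(L_\nu^{p_0}, L_\nu^\infty)$ rather than by abstract reiteration.

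I expect the main obstacle to lie in the first paragraph: the rearrangement manipulations underlying Peetre's $K$-functional formula, and in particular the careful control of $f_0^\star$ and $f_1^\star$ in terms of $f^\star$ needed to match the upper and lower bounds. Once that formula is in hand, the reiteration step is essentially bookkeeping, modulo verifying that the intermediate Lorentz spaces are of the correct class with respect to the couple $(L_\nu^1, L_\nu^\infty)$ so that the reiteration theorem applies; this is guaranteed by the continuous embeddings $L_\nu^1 \cap L_\nu^\infty \hookrightarrow L_\nu(p_i, q_i) \hookrightarrow L_\nu^1 + L_\nu^\infty$, which are immediate from the definitions.
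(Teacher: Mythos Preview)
Your proof sketch is correct and follows the classical route: Peetre's formula for the $K$-functional of $(L^1, L^\infty)$, followed by Holmstedt's reiteration theorem. However, the paper does not actually supply a proof of this statement. Theorem \ref{(2.19)} is presented in Section 2 as part of an overview of known results on Lorentz spaces and real interpolation, with the blanket reference ``We refer to \cite{SW}, \cite{H}, \cite{BS} and \cite{BL}'' at the start of the section; no argument is given in the text. What you have outlined is precisely the standard proof found in those references (see in particular \cite[Chapter 5]{BL} and \cite[Chapter 5]{BS}), so there is no discrepancy to report: your approach \emph{is} the one the paper implicitly defers to.
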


\begin{defin}
Let $(R, \mu)$ and $(S, \nu)$ be two non-atomic $\sigma$-finite measure spaces. Suppose $1\leq p < \infty, \hskip 1truemm 1\leq q \leq \infty.$ Let $T$ be a linear operator defined on the simple functions on $(R, \mu)$ and taking values on the measurable functions on $(S, \nu).$ Then $T$ is said to be of restricted weak type $(p, q)$ if there is a positive constant $M$ such that
$$t^{\frac 1q}(T\chi_F)^\star (t) \leq M\mu (F)^{\frac 1p} \quad (t>0)$$
for all measurable subsets $F$ of $R.$ This estimate can also be written in the form
$$||T\chi_F||_{q, \infty} \leq M||\chi_F||_{p, 1}$$
or equivalently in view of Lemma 2.5,
$$\sup \limits_{\lambda >0} \lambda \mu_{\chi_F} (\lambda)^{\frac 1q} \leq M\mu (F)^{\frac 1p}.$$ 
\end{defin}

In the next two statements, $L_R (p, 1)$ and $L_S (q, \infty)$ denote the corresponding Lorentz spaces on the respective measure spaces $(R, \mu)$ and $(S, \nu).$

\begin{prop}
Let $(R, \mu)$ and $(S, \nu)$ be two non-atomic $\sigma$-finite measure spaces. Suppose $1\leq p < \infty, \hskip 1truemm 1\leq q \leq \infty.$ Let $T$ be a linear operator defined on the simple functions on $(R, \mu)$ and taking values on the measurable functions on $(S, \nu).$ We suppose  that $T$ is of restricted weak type $(p, q).$ Then $T$ uniquely extends to a bounded operator from $L_R (p, 1)$ to $L_S (q, \infty).$ 
\end{prop}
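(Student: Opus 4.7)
The plan is to establish the boundedness estimate $\|Tf\|_{q,\infty}\leq C\,M\,\|f\|_{p,1}$ on simple functions on $(R,\mu)$, and then extend $T$ by continuity, using the density of simple functions in $L_R(p,1)$ (for $p<\infty$) together with the (quasi-)completeness of the target $L_S(q,\infty)$ and the convergence statement of Proposition \ref{(2.12)}.

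Density of simple functions with support of finite measure in $L_R(p,1)$ is standard from $\sigma$-finiteness of $(R,\mu)$, so the heart of the argument is the core inequality on simple functions. Splitting real and imaginary, and positive and negative parts, I reduce to non-negative simple $f$. Enumerating its distinct positive values as $c_1>c_2>\cdots>c_n>0$ and setting $E_k:=\{f\geq c_k\}$ and $d_k:=c_k-c_{k+1}>0$ (with $c_{n+1}:=0$), the sets nest as $E_1\subset E_2\subset\cdots\subset E_n$, and an elementary level-set computation yields
\[
f=\sum_{k=1}^{n}d_k\,\chi_{E_k},\qquad f^{\star}(s)=\sum_{k=1}^{n}d_k\,\chi_{[0,\mu(E_k))}(s),\qquad \|f\|_{p,1}^{\star}=p\sum_{k=1}^{n}d_k\,\mu(E_k)^{1/p}.
\]
For $1<q\leq\infty$, the functional $\|\cdot\|_{q,\infty}$ is a genuine norm on $L_S(q,\infty)$ (Proposition 2.6), so the triangle inequality applied to $Tf=\sum_k d_k\,T\chi_{E_k}$, combined with Lemma 2.8 and the restricted weak-type bound $\|T\chi_{E_k}\|_{q,\infty}^{\star}\leq M\,\mu(E_k)^{1/p}$, produces
\[
\|Tf\|_{q,\infty}\;\leq\;\sum_{k=1}^{n}d_k\,\|T\chi_{E_k}\|_{q,\infty}\;\leq\;\tfrac{q}{q-1}\sum_{k=1}^{n}d_k\,\|T\chi_{E_k}\|_{q,\infty}^{\star}\;\leq\;\tfrac{qM}{p(q-1)}\,\|f\|_{p,1}^{\star},
\]
which is the desired estimate up to passing between the equivalent quasi-norm and norm on $L_R(p,1)$. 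For $q=1$ the target is only quasi-normed; there I would bound $\mu_{Tf}(\lambda)$ directly through the inclusion $\{|Tf|>\lambda\}\subset\bigcup_k\{d_k\,|T\chi_{E_k}|>\lambda_k\}$, sum the resulting restricted weak-type estimates, and optimize the splitting $\sum_k\lambda_k=\lambda$ against the weights $d_k\,\mu(E_k)^{1/p}$.

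With the core inequality on simple functions in hand, extension is routine: for $f\in L_R(p,1)$ approximated by simple $f_m\to f$ in $\|\cdot\|_{p,1}$, the sequence $(Tf_m)$ is Cauchy in the (quasi-)complete space $L_S(q,\infty)$; its limit is independent of the approximating sequence by Proposition \ref{(2.12)}, and defines the unique bounded extension of $T$. The main obstacle I anticipate lies in the $q=1$ case: without a norm on the target, the triangle inequality is unavailable and one must work at the distribution-function level, with the optimization of $(\lambda_k)$ done carefully to avoid constants that depend on the number of levels. A cleaner alternative bypassing this split would be to use a characteristic-function atomic decomposition of $L_R(p,1)$, writing $f=\sum_k a_k\chi_{F_k}$ with $\sum_k|a_k|\,\mu(F_k)^{1/p}\lesssim\|f\|_{p,1}$, and applying the restricted weak-type bound directly to each atom.
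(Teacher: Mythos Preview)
The paper does not give its own proof of this proposition: Section 2 is an overview of known facts about Lorentz spaces, with references to \cite{SW}, \cite{H}, \cite{BS}, \cite{BL} and \cite{G}, and this proposition is stated without argument. So there is no in-paper proof to compare against.

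Your argument is the standard one (essentially the proof in Stein--Weiss and Bennett--Sharpley): decompose a nonnegative simple function as a positive combination of characteristic functions of nested sets, observe that this decomposition realizes $\|f\|_{p,1}^{\star}$ exactly as $p\sum_k d_k\,\mu(E_k)^{1/p}$, and then sum the restricted weak-type bounds. For $q>1$ your computation is correct as written; note only that the constant in Lemma~2.8 is $\tfrac{q}{q-1}$, which blows up as $q\downarrow 1$, so the $q=1$ case genuinely requires the separate distribution-function argument you sketch. That sketch is correct in spirit, though you should be explicit that the optimal choice $\lambda_k = \lambda\,d_k\,\mu(E_k)^{1/p}\big/\sum_j d_j\,\mu(E_j)^{1/p}$ gives $\mu_{Tf}(\lambda)\leq M\,\lambda^{-1}\sum_k d_k\,\mu(E_k)^{1/p}$ with a constant independent of the number of levels. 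One small caveat on the extension step: Proposition~\ref{(2.12)} in the paper is stated for $1<p<\infty$, so it does not literally cover the target $L_S(1,\infty)$ when $q=1$; you would need to invoke separately that weak-$L^1$ is a quasi-Banach space in which Cauchy sequences admit a.e.\ convergent subsequences, which is standard but not recorded in the paper.
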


\begin{thm}[Stein-Weiss]
%Let $0<\theta<1, \hskip 2truemm 1\leq q \leq\infty.$
Let $(R, \mu)$ and $(S, \nu)$ be two measure spaces. Suppose $1\leq p_0 < p_1 <  \infty$ and $1\leq q_0, q_1 \leq \infty$ with $q_0 \neq q_1.$ Suppose further that $T$ is a linear operator defined on the simple functions on $(R, \mu)$ and taking values on the measurable functions on $(S, \nu)$ and suppose that $T$ is of restricted weak types $(p_0, q_0)$ and $(p_1, q_1).$ If $1\leq r \leq \infty,$ then $T$ has a unique extension to a linear operator, again denoted by $T,$ which is bounded from $L_R(p, r)$ into $L_S (q, r)$ where
$$\frac 1p = \frac {1-\theta}{p_0}+\frac \theta {p_1}, \quad \frac 1q = \frac {1-\theta}{q_0}+\frac \theta {q_1}, \quad 0<\theta <1.$$
If in addition, the inequalities $p_j \leq q_j \hskip 2truemm (j=0, 1)$ hold, then $T$ is of strong type $(p, q),$ i.e. there exists a positive constant $C$ such that
$$||Tf||_{L^q (S, \nu)} \leq C||f||_{L^p (R, \mu)} \quad (f\in L^p (R, \mu)).$$
\end{thm}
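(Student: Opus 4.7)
The plan is to derive the Stein--Weiss theorem from three ingredients already developed in this section: the preceding extension result for restricted weak type operators, the abstract real interpolation theorem, and the identification of real interpolation spaces of Lorentz spaces (Theorem~\ref{(2.19)}).

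\emph{Step 1 (endpoint extensions).} From the restricted weak type hypotheses at $(p_0,q_0)$ and $(p_1,q_1)$, apply the preceding proposition to extend $T$ uniquely to bounded linear operators
$$T:L_R(p_0,1)\longrightarrow L_S(q_0,\infty),\qquad T:L_R(p_1,1)\longrightarrow L_S(q_1,\infty),$$
with operator norms $M_0,M_1$ controlled by the constants in the restricted weak type estimates. Both source spaces embed continuously in the space of measurable functions on $R$, and likewise for the target couple on $S$, so these are compatible couples in the sense of the definition above and $T$ is admissible with respect to them.

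\emph{Step 2 (abstract interpolation and identification).} Apply the abstract interpolation theorem to obtain, for every $0<\theta<1$ and $1\leq r\leq\infty$, the boundedness
$$T:\bigl[L_R(p_0,1),L_R(p_1,1)\bigr]_{\theta,r}\longrightarrow\bigl[L_S(q_0,\infty),L_S(q_1,\infty)\bigr]_{\theta,r}$$
with norm at most $M_0^{1-\theta}M_1^{\theta}$. Then invoke (a modest extension of) Theorem~\ref{(2.19)}(b) to identify the source with $L_R(p,r)$ and the target with $L_S(q,r)$, where $\frac1p=\frac{1-\theta}{p_0}+\frac{\theta}{p_1}$ and $\frac1q=\frac{1-\theta}{q_0}+\frac{\theta}{q_1}$. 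The hypothesis $q_0\neq q_1$ enters precisely to ensure that the target couple is genuinely nondiagonal, which is what is needed for the identification with $L_S(q,r)$. Combining these statements yields the boundedness $T:L_R(p,r)\to L_S(q,r)$.

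\emph{Step 3 (strong type conclusion).} If additionally $p_j\leq q_j$ for $j=0,1$, interpolating these inequalities linearly in $\theta$ gives $p\leq q$. Choose $r=p$. Then the identity $L(p,p)=L^p$ from \eqref{Lp} at the source, together with the inclusion $L_S(q,p)\hookrightarrow L_S(q,q)=L^q(S,\nu)$ at the target (which follows from the monotonicity lemma above, since $p\leq q$), produces the strong type $(p,q)$ estimate
$$\|Tf\|_{L^q(S,\nu)}\leq C\|f\|_{L^p(R,\mu)}.$$

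The main obstacle I expect is the precise identification of the interpolation spaces in Step 2 at the endpoints: Theorem~\ref{(2.19)}(b) as stated requires $1<p_0<p_1<\infty$, whereas here one of the $q_i$ may equal $\infty$ and $p_0$ may equal $1$. The standard route around this is Holmstedt's explicit formula for $K\bigl(t,f;L(a,b),L(c,d)\bigr)$ in terms of $f^\star$, from which the identifications with $L(p,r)$ and $L(q,r)$ follow by direct computation; one must be careful here not to lose the correct dependence of the norms on the parameters. The remaining steps are straightforward applications of the results already compiled in Section~2.
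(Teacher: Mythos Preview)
The paper does not supply its own proof of this theorem; it is stated as a classical result (attributed to Stein--Weiss) and used later without argument, so there is nothing in the paper to compare against. Your outline is the standard derivation via the extension proposition, the abstract interpolation theorem, and the identification of real interpolation spaces of Lorentz spaces, and it is essentially correct. The caveat you flag at the end is the only genuine gap: Theorem~\ref{(2.19)}(b) as written in the paper assumes $1<p_0<p_1<\infty$ on both sides, whereas in Step~2 you need $[L_S(q_0,\infty),L_S(q_1,\infty)]_{\theta,r}=L_S(q,r)$ with one of the $q_i$ possibly equal to $\infty$; as you say, this follows from Holmstedt's $K$-functional computation or can be found in the references \cite{BL,BS,H}.
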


We finish this section with the Wolff reiteration theorem. Let $(X, ||\cdot||_X)$ and $(Y, ||\cdot||_Y)$ be two normed vector spaces over the field $\mathbb C.$ We say that $X$ continuously embeds in $Y$ and we write $X\hookrightarrow Y$ if $X\subset Y$ and there exists a positive constant $C$ such that
$$||x||_Y \leq C||x||_X \quad \quad \forall x\in X.$$
It is easy to check that $X$ identifies with $Y$ if and only if $X\hookrightarrow Y$ and $Y\hookrightarrow X.$\\

\begin{defin}
If $(X_0, X_1)$ is a compatible couple of Banach spaces, then a Banach space $X$ is said to be an intermediate space between $X_0$ and $X_1$ if $X$ is continuously embedded between $X_0 \cap X_1$ and $X_0 + X_1,$ i.e.
$$X_0 \cap X_1 \hookrightarrow X \hookrightarrow X_0 + X_1.$$
\end{defin}

We remind the reader that the real interpolation space $[X_0, X_1]_{\theta, q}, \hskip 2truemm 0<\theta <1, \hskip 2truemm 1\leq q \leq \infty$ is an intermediate space between $X_0$ and $X_1.$ In this direction, we recall the following density theorem.

\begin{thm}\cite[Theorem 2.9]{BS}\label{densityth}
Let $(X_0, X_1)$ be a compatible couple of Banach spaces and suppose $0<\theta <1, \quad 1\leq q <\infty.$ Then the subspace $X_0 \cap X_1$ is dense in $[X_0, X_1]_{\theta, q}.$
\end{thm}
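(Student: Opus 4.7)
I will construct, for each $a \in [X_0, X_1]_{\theta, q}$, an explicit sequence $(a_n) \subset X_0 \cap X_1$ with $a_n \to a$ in $\|\cdot\|_{\theta, q, \overline{X}}$, and conclude by dominated convergence.

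For each $m \in \mathbb{Z}$, fix a near-optimal $K$-decomposition $a = u_m + v_m$ with $u_m \in X_0$, $v_m \in X_1$, and
$$\|u_m\|_{X_0} + 2^m \|v_m\|_{X_1} \leq 2 K(2^m, a, \overline{X}).$$
Set $a_n := u_n - u_{-n}$. The identity $u_n - u_{-n} = v_{-n} - v_n$ places $a_n$ in both $X_0$ and $X_1$, so $a_n \in X_0 \cap X_1$. Since $a - a_n = u_{-n} + v_n$, the definition of the $K$-functional gives
$$K(t, a - a_n, \overline{X}) \leq \|u_{-n}\|_{X_0} + t\|v_n\|_{X_1} \leq 2 K(2^{-n}, a, \overline{X}) + 2^{1-n} t K(2^n, a, \overline{X}).$$

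Two ingredients now finish the argument. \emph{Pointwise vanishing}: the finiteness of $\int_0^\infty (t^{-\theta} K(t, a, \overline{X}))^q \frac{dt}{t}$, combined with the monotonicity of $K(s, a, \overline{X})$ in $s$ (non-decreasing) and $K(s, a, \overline{X})/s$ in $s$ (non-increasing, by concavity), forces $K(s, a, \overline{X}) \to 0$ as $s \to 0^+$ and $K(s, a, \overline{X})/s \to 0$ as $s \to \infty$; substituted into the last display this yields $K(t, a - a_n, \overline{X}) \to 0$ for every fixed $t > 0$. \emph{Uniform majorization}: there is a constant $C$ independent of $t$ and $n$ with $K(t, a - a_n, \overline{X}) \leq C K(t, a, \overline{X})$. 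Outside $(2^{-n}, 2^n)$ this follows from subadditivity $K(t, a - a_n) \leq K(t, a) + K(t, a_n)$ and the crude bounds $\|a_n\|_{X_0} \leq 4 K(2^n, a)$, $\|a_n\|_{X_1} \leq 4 \cdot 2^n K(2^{-n}, a)$ (the latter using that $K(s, a)/s$ is non-increasing). Inside the interval one picks $m$ with $2^m \leq t < 2^{m+1}$ and decomposes $a_n = (u_n - u_m) + (u_m - u_{-n})$: the first summand equals $v_m - v_n$ and so lies in $X_1$, the second sits in $X_0$, and careful estimation gives $K(t, a_n) \leq 12 K(t, a)$.

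Combining the two ingredients, $(t^{-\theta} K(t, a - a_n, \overline{X}))^q \to 0$ pointwise and is dominated by the integrable function $(C t^{-\theta} K(t, a, \overline{X}))^q$, so Lebesgue's dominated convergence theorem yields $\|a - a_n\|_{\theta, q, \overline{X}} \to 0$, as required. The main obstacle is the uniform majorization on the middle interval: the naive decomposition $a_n = a_n + 0$ only gives $K(t, a_n) \leq \|a_n\|_{X_0}$, a quantity that is not comparable to $K(t, a)$ throughout $(2^{-n}, 2^n)$; the remedy is the scale-matched splitting at $2^m \sim t$, which yields a bound in which the norm of each piece is controlled by $K$ at the nearby dyadic level. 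The hypothesis $q < \infty$ is essential: it is precisely what forces $K(s, a) \to 0$ at $s = 0^+$ (otherwise $\int_0^1 (t^{-\theta} c)^q \frac{dt}{t}$ would diverge) and makes dominated convergence effective; for $q = \infty$ the density conclusion is known to fail.
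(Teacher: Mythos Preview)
The paper does not supply a proof of this statement: it is quoted verbatim as a background result from Bennett--Sharpley \cite[Theorem 2.9]{BS}, so there is no ``paper's own proof'' to compare against.

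That said, your argument is a correct and self-contained $K$-method proof. The construction $a_n = u_n - u_{-n}$ from near-optimal dyadic decompositions, the identity $a - a_n = u_{-n} + v_n$, the endpoint limits $K(s,a,\overline{X}) \to 0$ as $s \to 0^+$ and $K(s,a,\overline{X})/s \to 0$ as $s \to \infty$ forced by $\|a\|_{\theta,q,\overline{X}} < \infty$ with $q<\infty$, and the uniform domination $K(t,a_n,\overline{X}) \leq C\,K(t,a,\overline{X})$ via the scale-matched splitting $a_n = (u_n - u_m) + (u_m - u_{-n})$ on the middle range are all valid; dominated convergence then closes the argument. This is essentially the textbook proof (as in Bennett--Sharpley or Bergh--L\"ofstr\"om), so nothing is lost or gained relative to what the authors implicitly invoke by citation.
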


We next state the Wolff reiteration theorem.

\begin{thm}  [\label{wolff} \cite{W, JNP}]
Let $X_2$ and $X_3$ be intermediate Banach spaces of a compatible couple $(X_1, X_4).$ Let $0<\varphi, \psi < 1$ and $1\leq q, r \leq \infty$ and suppose that
$$X_2 = [X_1, X_3]_{\varphi, q}, \quad X_3 = [X_2, X_4]_{\psi, r}.$$
Then (up to equivalence of norms)
$$X_2 = [X_1, X_4]_{\rho, q}, \quad X_3 = [X_1, X_4]_{\theta, r}$$
where
$$\rho = \frac {\varphi \psi}{1-\varphi + \varphi \psi}, \quad \theta = \frac {\psi}{1-\varphi + \varphi \psi}.$$
\end{thm}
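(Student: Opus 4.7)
The plan is to establish both identifications by proving continuous embeddings in each direction, working with the $K$-functional throughout. Write $K_{ij}(t,a) := K(t, a, (X_i, X_j))$ whenever $a \in X_i + X_j$; the hypotheses then say that $\|a\|_{X_2}$ and $\|a\|_{X_3}$ are equivalent, respectively, to the $L^q(dt/t)$-norm of $t^{-\varphi} K_{13}(t,a)$ and the $L^r(dt/t)$-norm of $t^{-\psi} K_{24}(t,a)$. Because $X_2$ and $X_3$ are intermediate for $(X_1, X_4)$, all three $K$-functionals $K_{13}, K_{24}, K_{14}$ are defined on $X_1 + X_4$. The target exponents $\rho$ and $\theta$ are forced by the system $\rho = \varphi\theta$ and $\theta = (1-\psi)\rho + \psi$ --- which encodes how the relative positions of $X_2$ and $X_3$ on the $[X_1, X_4]_{\cdot,\cdot}$-scale must transform under the given interpolation relations --- and solving recovers the stated formulas.

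The first step is the \emph{easy direction}: the embeddings $[X_1, X_4]_{\rho, q} \hookrightarrow X_2$ and $[X_1, X_4]_{\theta, r} \hookrightarrow X_3$. Here one uses that $X_3$ is continuously embedded in $X_1 + X_4$ (since it is intermediate) and, symmetrically, that $X_4 \hookrightarrow X_1 + X_3$, so any $X_1 + X_4$-decomposition of $a$ induces an $X_1 + X_3$-decomposition at a controlled cost in norms. This yields a pointwise inequality $K_{13}(t,a) \lesssim K_{14}(t,a)$ up to rescaling, and by symmetry $K_{24}(t,a) \lesssim K_{14}(t,a)$. Inserting these into the defining integrals and changing variables converts the weights $t^{-\varphi}$ and $t^{-\psi}$ into $t^{-\rho}$ and $t^{-\theta}$, which gives the desired embeddings.

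The reverse inclusions $X_2 \hookrightarrow [X_1, X_4]_{\rho, q}$ and $X_3 \hookrightarrow [X_1, X_4]_{\theta, r}$ form the substance of the theorem. The main obstacle is that the defining system is coupled: $X_2$ and $X_3$ each refer to the other, and no direct estimate of $K_{14}$ by $K_{13}$ or $K_{24}$ alone is available. Wolff's strategy is an iterative decomposition: starting from a near-optimal $X_1 + X_3$-decomposition of $a$ realizing $K_{13}(t,a)$, one re-splits the $X_3$-piece via $X_3 = [X_2, X_4]_{\psi, r}$, then further re-splits the resulting $X_2$-piece via $X_2 = [X_1, X_3]_{\varphi, q}$, and iterates. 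With the scale parameters calibrated so that the successive pieces decay geometrically, summing the $X_1$- and $X_4$-parts yields a genuine $X_1 + X_4$-decomposition of $a$ whose total norm controls $K_{14}(t,a)$ by a combination of $K_{13}$ and $K_{24}$ evaluated at shifted arguments.

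The bulk of the technical work --- and the hard part --- is to integrate this pointwise bound on $K_{14}(t,a)$ against $t^{-\rho q}\,dt/t$ (respectively $t^{-\theta r}\,dt/t$) and conclude, by Hardy-type inequalities together with the calibration above, that the result is bounded by a multiple of $\|a\|_{X_2}$ (respectively $\|a\|_{X_3}$). The fixed-point equations determining $\rho$ and $\theta$ are precisely what makes the Hardy estimates close; any deviation from those specific exponents would break the summation. I would carry out the detailed integral estimates following Wolff \cite{W} and the more streamlined account in \cite{JNP}.
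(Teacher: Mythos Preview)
The paper does not prove this theorem. It is stated as a known result, with attribution to Wolff \cite{W} and Janson--Nilsson--Peetre \cite{JNP}, and then invoked as a black box in the proof of assertion (2) of Theorem \ref{th3}. There is therefore no proof in the paper to compare your proposal against; you are sketching an argument for a cited external result.

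Regarding the sketch itself: the iterative bootstrapping you outline for the ``reverse inclusions'' is indeed the heart of Wolff's argument, and your identification of the system $\rho = \varphi\theta$, $\theta = (1-\psi)\rho + \psi$ as the source of the exponents is correct. However, your ``easy direction'' contains a gap. You claim that ``symmetrically, $X_4 \hookrightarrow X_1 + X_3$,'' but no such symmetry is present in the hypotheses: the assumption that $X_3$ is intermediate for $(X_1, X_4)$ yields $X_1 \cap X_4 \hookrightarrow X_3 \hookrightarrow X_1 + X_4$, not an embedding of $X_4$ into $X_1 + X_3$. Without that embedding, the pointwise comparison $K_{13}(t,a) \lesssim K_{14}(t,a)$ you assert does not follow, and the direction you label ``easy'' is in fact not easy at all. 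In the treatments of \cite{W} and \cite{JNP}, both inclusions are obtained by the same coupled iterative mechanism; there is no free direction in the generality stated here.
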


\section{The Bergman-Lorentz spaces on tube domains over symmetric cones}

\subsection{Symmetric cones: definitions and preliminary notions}
Materials of this section are essentially from \cite{FK}. We
give some definitions and useful results.

Let $\Omega$ be an irreducible open cone of rank r inside a vector
space $V$ of dimension n, endowed with an inner product $(.|.)$
for which $\Omega$ is self-dual. Such a cone is called a symmetric cone in $V.$
%It is well-known that
%$\Omega$ induces in $V\equiv \bR^n$ a structure of Euclidean
%Jordan algebra, in which $\overline {\Omega}=\{x^2:x\in V\}$. We
%denote by $e$ the identity element in $V$ and by $(x/y)=tr(xy)$
%the canonical inner product.
Let $G(\Omega)$ be the group of transformations of $\Omega$, and
$G$ its identity component. It is well-known that there exists a
subgroup $H$ of $G$ acting simply transitively on $\Omega$, that
is every $y\in \Omega$ can be written uniquely as $y=g\mathbf e$
for some $g\in H$ and a fixed $\mathbf e\in \Omega$. The notation $\Delta$ is for the determinant of $\Omega.$

%Next, we recall the definition of the generalized gamma function on
%$\Omega$:
%$$\Gamma_{\Omega}({\alpha})=\int_{\Omega}e^{-(\mathbf
%{e}|\xi)}\Delta^{\alpha-\frac nr}(\xi)d\xi \quad \quad (\alpha\in
%\mathbb R).$$ This integral converges if and only if $\alpha
%>\frac nr -1.$ Being in this case it is equal to:
%$$\Gamma_{\Omega}(\alpha)=(2\pi)^{\frac{n-r}{2}}\prod_{j=1}^{r}\Gamma\left (\alpha-\frac{\frac nr - 1}{2}\frac {j-1}{r-1}\right )$$
%(see Chapter VII of \cite{FK}).

%\begin{lemma} Let $\alpha
%>\frac nr -1.$ Then for all $y\in \Omega$
%we have
%$$\int_{\Omega}e^{-(y|\xi)}\Delta^{\alpha-\frac nr}(\xi)d\xi=\Gamma_{\Omega}(\alpha)\Delta^{-\alpha}(y).$$
%\end{lemma}

We first recall the following lemma.

\begin{lemma}\cite{BBGNPR}
The following inequality is valid.
$$\Delta (y) \leq \Delta (y+v) \quad \quad (y, v\in \Omega).$$
\end{lemma}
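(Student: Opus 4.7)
The plan is to reduce the inequality to the special case $y = \mathbf{e}$, where $\mathbf{e}$ is the distinguished base point of $\Omega$, and then handle that case via the spectral decomposition available in the Euclidean Jordan algebra structure of $V$. For the reduction, I would exploit the simple transitivity of $H$ on $\Omega$: given $y \in \Omega$, there is a unique $g \in H$ with $g\mathbf{e} = y$, and since $H$ preserves $\Omega$, the element $w := g^{-1} v$ lies in $\Omega$. A key property of the determinant $\Delta$ is that it is a relative invariant of $H$: there exists a character $\chi : H \to (0,\infty)$ such that $\Delta(gx) = \chi(g)\, \Delta(x)$ for every $x \in V$ and $g \in H$ (see \cite{FK}). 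Applying this with $x = \mathbf{e}$ and $x = \mathbf{e} + w$ yields
$$\frac{\Delta(y + v)}{\Delta(y)} = \frac{\Delta\bigl(g(\mathbf{e} + w)\bigr)}{\Delta(g\mathbf{e})} = \frac{\Delta(\mathbf{e} + w)}{\Delta(\mathbf{e})} = \Delta(\mathbf{e} + w),$$
since $\Delta(\mathbf{e}) = 1$. It therefore suffices to prove that $\Delta(\mathbf{e} + w) \geq 1$ for every $w \in \Omega$.

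For this last step, I would invoke the spectral theorem in the Euclidean Jordan algebra $V$: any $w \in \Omega$ admits a decomposition $w = \sum_{i=1}^r \lambda_i c_i$ with $\lambda_i > 0$ and $(c_1, \ldots, c_r)$ a Jordan frame. Since the idempotents $c_i$ are mutually orthogonal and sum to $\mathbf{e}$, we have $\mathbf{e} + w = \sum_{i=1}^r (1 + \lambda_i)\, c_i$, and hence
$$\Delta(\mathbf{e} + w) = \prod_{i=1}^r (1 + \lambda_i) \geq 1,$$
which is exactly what is needed.

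The argument is an assembly of two standard structural facts from \cite{FK} (the relative invariance of $\Delta$ under $H$, and the spectral decomposition in $V$), so there is no genuine analytical obstacle. The only point requiring a little care is lining up the transformation law of $\Delta$ so that the character $\chi(g)$ cancels cleanly in the ratio $\Delta(y+v)/\Delta(y)$; once this is done, the inequality collapses to the trivial scalar computation $\prod (1+\lambda_i) \geq 1$.
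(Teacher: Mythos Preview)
Your argument is correct. Note, however, that the paper does not supply its own proof of this lemma: it simply records the inequality and cites \cite{BBGNPR}. So there is no in-paper proof to compare against; you have in effect filled in what the authors chose to import. The two ingredients you use --- the relative invariance $\Delta(gx)=\Delta(g\mathbf e)\,\Delta(x)$ for $g$ in the automorphism group of $\Omega$ (Faraut--Kor\'anyi, Proposition~III.4.3) and the spectral decomposition $w=\sum_i \lambda_i c_i$ with $\Delta(\sum_i \mu_i c_i)=\prod_i \mu_i$ --- are exactly the standard route, and the cancellation of the character in the ratio $\Delta(y+v)/\Delta(y)$ is handled correctly. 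One tiny stylistic remark: you do not really need the subgroup $H$ here; any $g$ in the identity component $G$ with $g\mathbf e=y$ would do, since the relative invariance holds on all of $G$.
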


%\begin{defin}
%For $\alpha  >\frac nr -1,$  We define a holomorphic extension to $T_\Omega$ of the function $\Delta^{-\alpha}(y), \quad y\in \Omega,$ by
%$$\Delta^{-\alpha}(\frac zi) := \int_{\Omega}e^{-(\frac zi|\xi)}\Delta^{\alpha-\frac nr}(\xi)d\xi.$$
%\end{defin}

We denote by $d_\Omega$ the $H$-invariant distance on $\Omega.$ The following  lemma will be useful.

\begin{lemma}{\cite{BBGNPR}}\label{invariant}
Let $\delta >0.$ There exists a positive constant $\gamma = \gamma (\delta, \Omega)$ such that for $\xi, \xi' \in \Omega$ satisfying $d_\Omega (\xi, \xi')\leq \delta,$ we have
$$\frac 1\gamma \leq \frac {\Delta (\xi)}{\Delta (\xi')}\leq \gamma.$$
\end{lemma}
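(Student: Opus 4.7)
The plan is to leverage the simple transitivity of $H$ on $\Omega$ together with the relative invariance of $\Delta$ under $H$ to reduce the global statement to a compactness argument on a fixed ball around the base point $\mathbf{e}$.

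First, I would fix a base point and translate the problem. Given $\xi, \xi' \in \Omega$ with $d_\Omega(\xi,\xi') \leq \delta$, write $\xi = g\mathbf{e}$ for the unique $g \in H$ guaranteed by simple transitivity. By $H$-invariance of the distance,
$$d_\Omega(\mathbf{e}, g^{-1}\xi') = d_\Omega(g\mathbf{e}, \xi') = d_\Omega(\xi,\xi') \leq \delta,$$
so the point $\eta := g^{-1}\xi'$ lies in the closed ball $B_\delta = \{\eta \in \Omega : d_\Omega(\mathbf{e},\eta) \leq \delta\}$.

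Next, I would use that $B_\delta$ is compact in $\Omega$. Since $d_\Omega$ is a proper $H$-invariant Riemannian-type distance on the homogeneous space $\Omega \simeq H/\{e\}$, closed balls are compact and contained in $\Omega$ (they do not touch the boundary). Because $\Delta$ is continuous and strictly positive on $\Omega$, there exists $\gamma = \gamma(\delta,\Omega) \geq 1$ (depending only on $\delta$ since the ball is fixed once $\delta$ is fixed) such that
$$\frac{1}{\gamma} \leq \Delta(\eta) \leq \gamma \qquad \text{for all } \eta \in B_\delta,$$
using the normalization $\Delta(\mathbf{e})=1$.

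Finally, I would transfer this estimate back to the original points via the relative invariance of the determinant. The determinant $\Delta$ is a character relative to the action of $H$, meaning there is a character $\chi \colon H \to (0,\infty)$ with $\Delta(hy) = \chi(h)\Delta(y)$ for all $h \in H$, $y\in\Omega$. Therefore
$$\frac{\Delta(\xi')}{\Delta(\xi)} = \frac{\Delta(g\eta)}{\Delta(g\mathbf{e})} = \frac{\chi(g)\Delta(\eta)}{\chi(g)\Delta(\mathbf{e})} = \Delta(\eta),$$
and the bounds from the previous step yield $\gamma^{-1} \leq \Delta(\xi')/\Delta(\xi) \leq \gamma$, which is the desired inequality after inversion.

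The main obstacle, if any, is purely structural rather than computational: one must invoke the correct facts about symmetric cones, namely that $d_\Omega$-balls are relatively compact in $\Omega$ (a consequence of $\Omega$ being a Riemannian symmetric space under $H$) and that $\Delta$ is relatively invariant under $H$. Both are standard and recorded in \cite{FK}, so the verification is essentially a careful bookkeeping of the $H$-action.
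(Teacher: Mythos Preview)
Your argument is correct and is exactly the standard proof: reduce to a fixed ball around $\mathbf e$ via the $H$-invariance of $d_\Omega$, use compactness of that ball together with continuity and positivity of $\Delta$, and transfer back via the relative invariance $\Delta(hy)=\chi(h)\Delta(y)$. Note that the paper itself does not supply a proof of this lemma; it is quoted from \cite{BBGNPR}, so there is nothing in the paper to compare your approach against beyond the citation.
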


%\begin{lemma}
%The measure $\Delta^{-\frac nr}(y)dxdy$ is $H$-invariant on $\Omega.$
%\end{lemma}

In the sequel, we write as usual $V=\mathbb R^n.$

\subsection{Bergman-Lorentz spaces on tube domains over symmetric cones. Proof of Theorem \ref{th1}}
Let $\Omega$ be an irreducible symmetric cone in $\mathbb R^n$ with rank $r,$  determinant $\Delta$ and fixed point $\mathbf e.$ We denote $T_\Omega = \mathbb R^n +i\Omega$ the tube domain in $\mathbb C^n$ over $\Omega.$ For $\nu \in \mathbb R,$ we define the weighted measure $\mu$ on $T_\Omega$ by $d\mu (x+iy) = \Delta^{\nu-\frac nr} (y)dxdy.$ For a measurable subset $A$ of $T_\Omega,$ we denote by $|A|$ the (unweighted) Lebesgue measure of $A,$ i.e. $|A|=\int_A dxdy.$

\begin{defin}
Since the determinant $\Delta$ is a polynomial  in $\mathbb R^n,$ it can be extended in a natural way to $\mathbb C^n$ as a holomorphic polynomial we shall denote $\Delta\left (\frac {x+iy}i \right ).$ It is known that this extension is zero free on the simply connected region $T_\Omega$ in $\mathbb C^n.$ So for each real number $\alpha,$ the power function $\Delta^\alpha$ can also be extended as a holomorphic function $\Delta^\alpha \left (\frac {x+iy}i \right )$ on $T_\Omega.$

%For $\alpha  >\frac nr -1,$  we define a holomorphic extension to $T_\Omega$ of the function $\Delta^{-\alpha}(y), \quad y\in \Omega,$ by
%$$\Delta^{-\alpha}\left (\frac zi \right ) := \int_{\Omega}e^{-(\frac zi|\xi)}\Delta^{\alpha-\frac nr}(\xi)d\xi.$$

\end{defin}

The following lemma will be useful.

\begin{lemma}\cite{BBGNPR}
Let $\alpha  >0.$ 
\begin{enumerate}
\item We have $$\left \vert \Delta^{-\alpha}\left (\frac zi\right )\right \vert \leq \Delta^{-\alpha}(\Im m \hskip 1truemm z) \quad \quad (z\in T_\Omega).$$
\item We suppose $\nu >\frac nr -1.$ The following estimate 
$$\int_\Omega \left (\int_{\mathbb R^n} \left \vert\Delta^{-\alpha}\left (\frac {x+i(y+\mathbf e)}i\right )\right \vert^p dx\right )\Delta^{\nu-\frac nr} (y)dy<\infty$$
holds if and only if $\alpha > \frac {\nu+\frac {2n}r-1}p.$
\end{enumerate}
\end{lemma}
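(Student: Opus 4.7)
The plan is to prove part (1) via the Gindikin-Koszul integral representation of $\Delta^{-\alpha}(z/i)$, and to establish part (2) by first reducing the inner integral to a simple power of $\Delta(y)$ by an $H$-equivariance argument, and then applying the classical Beta identity on $\Omega$. For part (1), I first treat $\alpha > \frac{n}{r} - 1$: the classical formula $\Gamma_\Omega(\alpha)\Delta^{-\alpha}(y) = \int_\Omega e^{-(y|\xi)} \Delta^{\alpha - \frac{n}{r}}(\xi)\,d\xi$ on $\Omega$ extends by analytic continuation to
$$\Delta^{-\alpha}(z/i) = \frac{1}{\Gamma_\Omega(\alpha)} \int_\Omega e^{i(z|\xi)} \Delta^{\alpha - \frac{n}{r}}(\xi)\,d\xi, \qquad z \in T_\Omega,$$
the integral being absolutely convergent since $\Im z \in \Omega$. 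Taking moduli inside and using $|e^{i(z|\xi)}| = e^{-(\Im z|\xi)}$ yields $|\Delta^{-\alpha}(z/i)| \leq \Delta^{-\alpha}(\Im z)$ for such $\alpha$. To extend to arbitrary $\alpha > 0$, I use that the fixed holomorphic branch satisfies $|\Delta^{-\alpha}(z/i)| = |\Delta(z/i)|^{-\alpha}$ and $\Delta^{-\alpha}(\Im z) = \Delta(\Im z)^{-\alpha}$; the inequality for one exponent $\alpha_0 > \frac{n}{r} - 1$ thus gives $|\Delta(z/i)| \geq \Delta(\Im z)$, whence (1) for every $\alpha > 0$ follows by raising to the $\alpha$-th power.

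For part (2) I first compute the inner integral. Set $\phi(y) := \int_{\mathbb R^n} |\Delta^{-\alpha}((x + iy)/i)|^p\,dx$. The simply transitive group $H$ acts on $\Omega$ and extends holomorphically to $T_\Omega$ with the multiplier identity $\Delta(g \cdot z/i) = \chi(g)\Delta(z/i)$ for a positive character $\chi$ on $H$; the $H$-invariance of $dy/\Delta^{n/r}(y)$ on $\Omega$ moreover forces $|\det g| = \chi(g)^{n/r}$. A change of variable $x \mapsto gx$ then gives
$$\phi(gy) = |\chi(g)|^{-\alpha p}|\det g|\,\phi(y) = \chi(g)^{\frac{n}{r} - \alpha p}\phi(y),$$
so $\phi(y) = \phi(\mathbf e)\,\Delta^{\frac{n}{r} - \alpha p}(y)$ as soon as $\phi(\mathbf e) < \infty$. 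Finiteness of $\phi(\mathbf e)$ is verified from the asymptotic behaviour of $|\Delta((x + i\mathbf e)/i)|$ as $|x| \to \infty$ (by Plancherel together with the Gindikin-Koszul formula when $p = 2$, and by reducing to a product of rank-one determinants via the Peirce decomposition for general $p$), and it holds precisely when $\alpha p > \frac{2n}{r} - 1$.

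Substituting into the double integral reduces (2) to
$$\phi(\mathbf e)\int_\Omega \Delta^{\frac{n}{r} - \alpha p}(y + \mathbf e)\,\Delta^{\nu - \frac{n}{r}}(y)\,dy,$$
and the classical Beta integral on $\Omega$ \cite[Chapter VII]{FK} asserts that $\int_\Omega \Delta^{-s}(y + \mathbf e)\Delta^{\nu - \frac{n}{r}}(y)\,dy$ is finite if and only if $\nu > \frac{n}{r} - 1$ and $s > \nu + \frac{n}{r} - 1$. With $s = \alpha p - \frac{n}{r}$ the second condition reads exactly $\alpha > (\nu + \frac{2n}{r} - 1)/p$, which a fortiori forces $\alpha p > \frac{2n}{r} - 1$ so that $\phi(\mathbf e) < \infty$; necessity in both directions comes from positivity of the integrand. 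The main technical obstacle is the sharp inner-integral finiteness for $p \neq 2$: the $H$-equivariance reduces the question to a single point $y = \mathbf e$, but the required pointwise bound on $|\Delta((x + i\mathbf e)/i)|$ as $|x| \to \infty$ must be extracted from the Jordan-algebra structure of $\Delta$ and is not a purely formal consequence of the group action.
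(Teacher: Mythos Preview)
The paper does not supply a proof of this lemma: it is quoted with a citation to \cite{BBGNPR} and used as a black box. There is therefore no in-paper argument to compare against.

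Your proposal is correct and follows the standard route found in \cite{BBGNPR} and \cite{FK}. Part (1) via the Laplace--Gindikin integral for $\alpha>\frac nr-1$ and then the identity $|\Delta^{-\alpha}(z/i)|=|\Delta(z/i)|^{-\alpha}$ to reach all $\alpha>0$ is exactly how it is done there. For part (2), the $H$-equivariance computation of the inner integral $\phi(y)=\phi(\mathbf e)\,\Delta^{\frac nr-\alpha p}(y)$ followed by the Beta integral $\int_\Omega\Delta^{-s}(y+\mathbf e)\Delta^{\nu-\frac nr}(y)\,dy$ on $\Omega$ is again the standard reduction. The one place where your write-up is thinner than the cited source is the sharp threshold $\alpha p>\frac{2n}r-1$ for the finiteness of $\phi(\mathbf e)$: in \cite{BBGNPR} and \cite{FK} this is obtained not by a rank-one Peirce reduction but by the explicit evaluation $\int_{\mathbb R^n}|\Delta((x+i\mathbf e)/i)|^{-\lambda}\,dx=c_\lambda$ for $\lambda>\frac{2n}r-1$, itself a consequence of Plancherel applied to the Laplace representation of $\Delta^{-\lambda/2}$. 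Your sketch gestures at this for $p=2$ but is vague for general $p$; it would be cleaner to simply set $\lambda=\alpha p$ in that known one-variable formula rather than invoke a separate Peirce argument.
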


We denote by $d$ the Bergman distance on $T_\Omega.$ We remind the reader that the group $\mathbb R^n \times H$ acts simply transitively on $T_\Omega.$ The following lemma will also be useful.

\begin{lemma}
The measure $\Delta^{-\frac {2n}r}(y)dxdy$ is $\mathbb R^n \times H$-invariant on $T_\Omega.$
\end{lemma}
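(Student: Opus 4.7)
The plan is to verify invariance directly by change of variables, exploiting the transformation formula for the determinant function $\Delta$ under $G(\Omega)$. Since the $\mathbb{R}^n$-factor acts on $T_\Omega$ by translation $x \mapsto x + a$, which preserves $dxdy$ and leaves $\Im z$ unchanged, invariance under that factor is immediate, and the content lies entirely in the action of $g \in H$.

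First I would make explicit how $H$ acts on $T_\Omega$. Since $H$ is a subgroup of $G(\Omega) \subset GL(V, \mathbb{R})$, each $g \in H$ extends $\mathbb{C}$-linearly to $\mathbb{C}^n$ and acts on $T_\Omega$ by the affine biholomorphism $g \cdot (x + iy) = gx + i(gy)$. The real Jacobian of this map on $\mathbb{R}^n \times \mathbb{R}^n$ is then $(\det g)^2$, because $g$ is applied separately to real and imaginary parts with the same determinant in each factor.

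The main ingredient is the standard transformation rule
$$\Delta(gy) = (\det g)^{r/n}\,\Delta(y), \qquad g \in G(\Omega),\ y \in \Omega$$
(Faraut--Koranyi, Chapter III). Raising to the $-2n/r$ power yields
$$\Delta^{-\frac{2n}{r}}(gy) = (\det g)^{-2}\,\Delta^{-\frac{2n}{r}}(y).$$
Performing the change of variables $w = (a, g) \cdot z$ in an integral of the form $\int_{T_\Omega} f(z)\, \Delta^{-\frac{2n}{r}}(\Im z)\,dxdy$ therefore produces a factor $(\det g)^{-2}$ from the density that exactly cancels the Jacobian $(\det g)^2$. Combined with translation invariance in $x$, this gives $\int f((a,g)\cdot z)\,d\mu_0(z) = \int f(z)\,d\mu_0(z)$ for $d\mu_0 = \Delta^{-2n/r}(y)\,dxdy$, which is the desired invariance.

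I do not anticipate a serious obstacle. The only point to keep track of carefully is that the real Jacobian of $g \in H$ acting on $T_\Omega \cong \mathbb{R}^{2n}$ carries two factors of $\det g$ rather than one, precisely because $g$ acts $\mathbb{C}$-linearly and so the real Jacobian equals $|\det_{\mathbb{C}} g|^2$. This is what forces the exponent $2n/r$ in the statement, and it is consistent with the fact that, up to a multiplicative constant, $\Delta^{-2n/r}(y)\,dxdy$ coincides with the Bergman volume form on $T_\Omega$, which is invariant under the full biholomorphism group and in particular under the affine subgroup $\mathbb{R}^n \times H$.
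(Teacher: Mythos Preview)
Your proof is correct; the paper itself states this lemma without proof, presumably treating it as a standard fact from \cite{FK} or \cite{BBGNPR}. Your direct change-of-variables argument, using $\Delta(gy)=(\det g)^{r/n}\Delta(y)$ for $g\in G(\Omega)$ together with the real Jacobian $(\det g)^2$ of the $\mathbb{C}$-linear action on $T_\Omega$, is exactly the standard verification and there is nothing to correct.
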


The following corollary is an easy consequence of Lemma \ref{invariant}.

\begin{cor}\label{Bergman distance}
Let $\delta >0.$ There exists a positive constant $C = C (\delta)$ such that for $z, z' \in \Omega$ satisfying $d (z, z')\leq \delta,$ we have
$$\frac 1C \leq \frac {\Delta (\Im m \hskip 1truemm z)}{\Delta (\Im m \hskip 1truemm z')}\leq C.$$
\end{cor}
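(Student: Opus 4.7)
The plan is to deduce this corollary from Lemma \ref{invariant} by combining invariance under the natural group action on $T_\Omega$ with a compactness argument, rather than by infinitesimally comparing the two metrics. The starting observation is that the biholomorphisms of $T_\Omega$ coming from the group $\mathbb{R}^n \times H$ -- real translations composed with the linear extension of the $H$-action to $T_\Omega$ -- preserve the Bergman distance $d$, and they simultaneously leave the ratio $\Delta(\Im z)/\Delta(\Im z')$ invariant, since $\Delta$ is independent of the real part and transforms by a character of $H$.

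Because $\mathbb{R}^n \times H$ acts simply transitively on $T_\Omega$, I would reduce to the base point $i\mathbf{e}$. Writing $z = x + iy$ and choosing $h \in H$ with $hy = \mathbf{e}$, the transformation $w \mapsto h(w - x)$ sends $z$ to $i\mathbf{e}$ and $z'$ to some $\tilde z' \in T_\Omega$, while preserving both the hypothesis $d(z, z') \le \delta$ and the ratio under consideration. It therefore suffices to bound $\Delta(\mathbf{e})/\Delta(\Im \tilde z')$ from above and below under the sole assumption $d(i\mathbf{e}, \tilde z') \le \delta$.

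This is now a purely compactness statement. The Bergman metric on $T_\Omega$ is complete (for instance because $T_\Omega$ is biholomorphic, via a Cayley-type transform, to a bounded homogeneous domain), so the closed Bergman ball $\overline{B_d(i\mathbf{e},\delta)}$ is a compact subset of $T_\Omega$. On this compact set the continuous strictly positive function $w \mapsto \Delta(\Im w)$ attains a positive minimum and a finite maximum, both depending only on $\delta$, which yields the required constant $C=C(\delta)$.

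The main obstacle is the external input that closed Bergman balls in $T_\Omega$ are compact; once this is granted, the argument reduces to the invariance observations already recorded in the preceding lemmas. An alternative route, more directly in the spirit of the phrase ``easy consequence of Lemma \ref{invariant}'', would be to first establish a Lipschitz inequality $d_\Omega(\Im z,\Im z') \leq C_0\, d(z,z')$ by comparing the Bergman K\"ahler metric with the $H$-invariant Riemannian metric on $\Omega$ along the slice $\{iy : y\in\Omega\}$ (using uniqueness of the $H$-invariant metric) and extending by equivariance of the projection $\pi: T_\Omega \to \Omega,\ x+iy \mapsto y$; then Lemma \ref{invariant}, applied with $C_0\delta$ in place of $\delta$, gives the conclusion at once. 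Either approach works; I would favour the first since it needs no structure theory of the Bergman metric beyond completeness.
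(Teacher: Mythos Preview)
Your proposal is correct. The paper gives no explicit proof, merely recording that the corollary is ``an easy consequence of Lemma \ref{invariant}''; this points squarely at your \emph{second} route, in which one checks that the projection $\Im m: T_\Omega \to \Omega$ is Lipschitz from the Bergman distance $d$ to the $H$-invariant distance $d_\Omega$, so that $d(z,z')\le\delta$ forces $d_\Omega(\Im m\, z,\Im m\, z')\le C_0\delta$ and Lemma \ref{invariant} applied with parameter $C_0\delta$ finishes the argument.

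Your preferred first approach, via invariance under $\mathbb{R}^n\times H$ and compactness of closed Bergman balls, is a genuinely different route: it bypasses Lemma \ref{invariant} entirely, trading the metric comparison for the single external fact that $T_\Omega$ is Bergman-complete. What this buys is conceptual simplicity and no need to compare the two Riemannian structures; what the paper's intended route buys is a self-contained reduction to a lemma already on the page. Either is perfectly adequate here.
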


The next proposition will lead us to the definition of Bergman-Lorentz spaces.

\begin{prop}
The measure space $(T_\Omega, \mu)$ is a non-atomic $\sigma$-finite measure space.
\end{prop}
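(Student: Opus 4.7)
The plan is to verify $\sigma$-finiteness and non-atomicity separately, exploiting the fact that $\mu$ has a continuous, strictly positive density with respect to Lebesgue measure on the open set $T_\Omega \subset \mathbb{R}^{2n}$.

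For $\sigma$-finiteness, I would exhaust $T_\Omega$ by sets on which both the Euclidean volume and the density are controlled. A convenient choice is
$$E_k = \{x+iy \in T_\Omega : |x| \leq k, \; |y| \leq k, \; \Delta(y) \geq 1/k\}, \qquad k \in \mathbb{N}.$$
Each $E_k$ is Lebesgue-bounded, and on $E_k$ one has $1/k \leq \Delta(y) \leq C_k$ by continuity of $\Delta$ on $\{|y|\leq k\}$, so the density $\Delta^{\nu - n/r}(y)$ is bounded on $E_k$ regardless of the sign of $\nu - n/r$. Hence $\mu(E_k) < \infty$. Since $\Delta$ is continuous and strictly positive on $\Omega$, every point of $T_\Omega$ lies in some $E_k$, so $T_\Omega = \bigcup_k E_k$.

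For non-atomicity, I would use the mutual absolute continuity of $\mu$ and Lebesgue measure $\lambda$ on $T_\Omega$. Given any measurable set $A \subset T_\Omega$ with $\mu(A) > 0$, positivity of the density gives $\lambda(A) > 0$. Because $\lambda$ on $\mathbb{R}^{2n}$ is non-atomic, there exists a measurable $B \subset A$ with $0 < \lambda(B) < \lambda(A)$. Applying absolute continuity in both directions gives $\mu(B) > 0$ and $\mu(A \setminus B) > 0$, whence $0 < \mu(B) < \mu(A)$, which exhibits $A$ as non-atomic.

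No step presents a genuine obstacle; this is a routine verification. The only mild subtlety is keeping the exhaustion $\{E_k\}$ uniformly bounded away from $\partial \Omega$ when $\nu < n/r$, so that the blow-up of $\Delta^{\nu - n/r}$ near the boundary does not make $\mu(E_k)$ diverge. Including the condition $\Delta(y) \geq 1/k$ in the definition of $E_k$ handles this uniformly.
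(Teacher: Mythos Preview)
Your argument is correct. Both parts are standard verifications and go through exactly as you describe: the exhaustion $E_k=\{|x|\le k,\ |y|\le k,\ \Delta(y)\ge 1/k\}$ gives $\sigma$-finiteness for every real $\nu$, and mutual absolute continuity with Lebesgue measure transfers non-atomicity.

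There is nothing to compare against: the paper states this proposition without proof, treating it as an evident fact about a measure with a continuous strictly positive density on an open subset of $\mathbb{R}^{2n}$. Your write-up simply fills in that routine verification.
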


\noindent
In view of this proposition, all the results of the previous section are valid on the measure space $(T_\Omega, \mu).$ We shall denote by $L_\nu (p, q)$ the corresponding Lorentz space on $(T_\Omega, \mu).$ Moreover, we write $L_\nu^p$ for the weighted Lebesgue space $L^p (T_\Omega, d\mu)$ on  $(T_\Omega, \mu).$\\
 The following corollary is an immediate consequence of Theorem \ref{densityth} and assertion a) of Theorem 2.17.

\begin{cor}\label{Schwartz}
The subspace $\mathcal C^\infty_c (T_\Omega)$ consisting of $\mathcal C^\infty$ functions with compact support on $T_\Omega$ is dense in the Lorentz space $L_\nu (p, q)$ for all $1<p<\infty, \quad 1\leq q <\infty.$
\end{cor}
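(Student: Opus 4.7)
\emph{Proof plan.} The strategy is to realize $L_\nu(p,q)$ as a real interpolation space between two Lebesgue spaces in which $\mathcal{C}^\infty_c$-approximation is classical, and then to chain two density statements.

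Since $1 < p < \infty$, I pick exponents $1 \leq p_0 < p < p_1 < \infty$ together with $\theta \in (0,1)$ satisfying $\frac{1}{p} = \frac{1-\theta}{p_0} + \frac{\theta}{p_1}$. Assertion a) of Theorem \ref{(2.19)} then identifies $L_\nu(p,q)$ with $[L_\nu^{p_0}, L_\nu^{p_1}]_{\theta, q}$ up to equivalent norms. Because $0 < \theta < 1$ and $1 \leq q < \infty$, Theorem \ref{densityth} applies to the compatible couple $(L_\nu^{p_0}, L_\nu^{p_1})$ and yields that $L_\nu^{p_0} \cap L_\nu^{p_1}$ is dense in $[L_\nu^{p_0}, L_\nu^{p_1}]_{\theta, q}$, hence in $L_\nu(p, q)$.

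It remains to approximate any $g \in L_\nu^{p_0} \cap L_\nu^{p_1}$ by elements of $\mathcal{C}^\infty_c(T_\Omega)$ in the $L_\nu(p,q)$-norm. The elementary bound $K(t, g, (L_\nu^{p_0}, L_\nu^{p_1})) \leq \min(1, t)\, \max(\|g\|_{L_\nu^{p_0}}, \|g\|_{L_\nu^{p_1}})$ furnishes a continuous inclusion $L_\nu^{p_0} \cap L_\nu^{p_1} \hookrightarrow L_\nu(p,q)$, so it suffices to approximate $g$ simultaneously in the $L_\nu^{p_0}$- and $L_\nu^{p_1}$-norms. This is achieved in the usual two steps: first truncate $g$ by a cutoff $\chi_K$, where $K$ runs through an exhaustion of $T_\Omega$ by compact subsets (dominated convergence handles both norms because $p_0, p_1 < \infty$); then convolve $g\chi_K$ with a smooth mollifier of sufficiently small support to stay inside $T_\Omega$. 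Since $\Delta^{\nu - n/r}$ is bounded above and below by positive constants on any compact $K \subset T_\Omega$, the unweighted convergence of mollifications transfers to the weighted $L_\nu^{p_i}$-setting for $i=0,1$.

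Chaining the two approximations gives the corollary. There is no genuinely hard step, which is why the excerpt advertises the statement as an ``immediate consequence''; the only points worth flagging are the continuous embedding $L_\nu^{p_0} \cap L_\nu^{p_1} \hookrightarrow L_\nu(p,q)$, which is a generic feature of the $K$-method, and the mild role of the hypotheses: $q < \infty$ is needed precisely to invoke Theorem \ref{densityth}, while $p < \infty$ is what permits the choice $p_1 < \infty$, which is in turn what makes $\mathcal{C}^\infty_c$ dense in $L_\nu^{p_1}$.
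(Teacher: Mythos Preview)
Your argument is correct and follows exactly the route the paper indicates: realize $L_\nu(p,q)$ as $[L_\nu^{p_0},L_\nu^{p_1}]_{\theta,q}$ via Theorem~\ref{(2.19)}(a), invoke Theorem~\ref{densityth} to get density of $L_\nu^{p_0}\cap L_\nu^{p_1}$, and then pass from the intersection to $\mathcal C^\infty_c(T_\Omega)$ by the classical truncation--mollification procedure. The paper leaves this last step implicit, so your write-up simply makes explicit what the authors take for granted; the embedding $L_\nu^{p_0}\cap L_\nu^{p_1}\hookrightarrow L_\nu(p,q)$ you record is the standard intermediate-space property.
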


\begin{defin}
The Bergman-Lorentz space $A_\nu (p, q), \hskip 2truemm 1\leq p, q\leq \infty$ is the subspace of the Lorentz space $L_\nu (p, q)$ consisting of holomorphic functions. In particular $A_\nu (p, p)=A_\nu^p,$ where $A_\nu^p = Hol (T_\Omega)\cap L_\nu^p$ is the usual weighted Bergman space on $T_\Omega.$ In fact, $A_\nu^p, \hskip 2truemm 1\leq p \leq \infty$ is a closed subspace of the Banach space $L_\nu^p.$ The Bergman projector $P_\nu$ is the orthogonal projector from the Hilbert space $L_\nu^2$ to its closed subspace $A_\nu^2.$
\end{defin}

\begin{exa}
Let $\nu >\frac nr -1, \hskip 2truemm 1 < p < \infty, \hskip 2truemm 1\leq q \leq \infty.$ The function $F(z) = \Delta^{-\alpha} (\frac {z+i\mathbf e}i)$ belongs to the Bergman-Lorentz $A_\nu (p, q)$ if $\alpha > \frac {\nu+\frac {2n}r-1}p.$ Indeed we can find positive numbers $p_0$ and $p_1$ such that $1\leq p_0 < p < p_1 \leq \infty$ and $\alpha > \frac {\nu+\frac {2n}r-1}{p_i} \quad (i=0, 1).$ By assertion 2) of Lemma 3.4, the holomorphic function $F$ belongs to $L_\nu^{p_i} \quad (i=0, 1).$ The conclusion follows by assertion a) of Theorem 2.17 and Theorem 2.22.
\end{exa}

\begin{lemma}
Let $\nu \in \mathbb R, \hskip 2truemm 1< p\leq \infty, \hskip 2truemm 1\leq q \leq \infty$ and let $f\in A_\nu (p, q).$ For every compact set $K$ of $\mathbb C^n$ contained in $T_\Omega,$ there is a positive constant $C_K$ such that
$$|f(z)|\leq C_K ||f||_{p, q} \quad \quad (z\in K).$$
\end{lemma}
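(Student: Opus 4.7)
The plan is to combine the sub-mean-value inequality for $|f|$ on a small polydisc sitting inside $T_\Omega$ with the Hardy--Littlewood rearrangement inequality (Theorem \ref{HL}) and the elementary monotonicity bound $f^\star(s)\lesssim s^{-1/p}\|f\|_{p,q}^\star$.

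First I would fix the compact $K\subset T_\Omega$ and pick $\delta>0$ small enough that the thickening $K_\delta:=\bigcup_{z\in K}\overline{D(z,\delta)}$, where $D(z,\delta)$ denotes the Euclidean polydisc of polyradius $(\delta,\ldots,\delta)$ centered at $z$, still lies in $T_\Omega$; this is possible because $T_\Omega$ is open and $K$ is compact. The iterated Cauchy formula applied variable by variable gives
$$|f(z)|\le\frac{1}{(\pi\delta^2)^n}\int_{D(z,\delta)}|f(w)|\,dV(w),\qquad z\in K.$$
Since $K_\delta$ is compact and the weight $y\mapsto \Delta^{\nu-\frac{n}{r}}(y)$ is continuous and strictly positive on $\Omega$, there exist positive constants $c_K,C_K$ depending only on $K$ and $\delta$ with
$$c_K\le \Delta^{\nu-\frac{n}{r}}(\Im m\, w)\le C_K\qquad(w\in K_\delta).$$
Dividing and multiplying by this weight in the previous integral yields
$$|f(z)|\le C'_K\int_{T_\Omega}|f(w)|\,\chi_{D(z,\delta)}(w)\,d\mu(w).$$

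Next I would apply Hardy--Littlewood to the right-hand side. Since $\mu(D(z,\delta))\le M_K:=C_K(\pi\delta^2)^n$ uniformly in $z\in K$ and $\chi_{D(z,\delta)}^\star=\chi_{[0,\mu(D(z,\delta))]}$, Theorem \ref{HL} produces
$$|f(z)|\le C'_K\int_0^{M_K}f^\star(s)\,ds.$$
For $1<p<\infty$, restricting the defining integral of $\|f\|_{p,q}^\star$ to $(0,s)$ and using that $f^\star$ is non-increasing gives $f^\star(s)\le(q/p)^{1/q}s^{-1/p}\|f\|_{p,q}^\star$; integrating this against $ds$ on $(0,M_K)$ converges precisely because $p>1$ and produces the bound $|f(z)|\le C''_K\|f\|_{p,q}^\star$, which is equivalent to $C''_K\|f\|_{p,q}$ by Lemma 2.10. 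The edge cases are immediate: if $p=\infty$ and $q<\infty$ then $A_\nu(p,q)=\{0\}$ by the remark following Definition 2.3, while if $p=q=\infty$ one has $\|f\|_{\infty,\infty}=\|f\|_{\infty}$ and continuity of the holomorphic $f$ gives $|f(z)|\le\|f\|_{\infty}$ directly without invoking the mean value.

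I do not expect a serious obstacle; the only point needing care is a uniform choice of $\delta$ so that $K_\delta\subset T_\Omega$ simultaneously for all $z\in K$, which is routine. As an alternative route, one could invoke Corollary \ref{Bergman distance} to compare $\Delta(\Im m\, w)$ with $\Delta(\Im m\, z)$ on a Bergman ball of controlled radius, but for a fixed compact $K$ the bare continuity of the weight on the Euclidean thickening $K_\delta$ already suffices.
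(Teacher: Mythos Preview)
Your argument is correct and follows essentially the same route as the paper's proof: mean-value inequality on a small Euclidean neighborhood, uniform bounds on the weight over the compact thickening, then Hardy--Littlewood (Theorem \ref{HL}) combined with the pointwise estimate $f^\star(s)\lesssim s^{-1/p}\|f\|_{p,q}^\star$. The only cosmetic differences are that the paper uses Euclidean balls rather than polydiscs and reduces explicitly to $q=\infty$ via Lemma 2.7 before estimating, whereas you bound $f^\star$ directly; also, the equivalence of $\|\cdot\|_{p,q}^\star$ and $\|\cdot\|_{p,q}$ is Lemma 2.8 in the paper, not 2.10.
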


\begin{proof}
Suppose first $p=\infty.$ The interesting case is $q=\infty.$ In this case, $L_\nu (p, q)=L_\nu^\infty$ and $A_\nu (p, q) = A_\nu^\infty$ is the space of bounded holomorphic functions on $(T_\Omega, \mu).$ The relevant result is straightforward.\\
We next suppose $1< p< \infty, \hskip 2truemm 1< q <\infty$ and  $f\in A_\nu (p, q).$ Since $L_\nu (p, q)$ continuously embeds in $L_\nu (p, \infty)$ (Lemma 2.7) it suffices to show that for each $f\in A_\nu (p, \infty),$ we have 
$$|f(z)|\leq C_K ||f||_{p, \infty} \quad \quad (z\in K).$$
For a compact set $K$ in $\mathbb C^n$ contained in $T_\Omega,$ we call $\rho$ the Euclidean distance from $K$ to the boundary of $T_\Omega.$ We denote $B(z, \frac \rho 2)$ the Euclidean ball centered at $z,$ with radius $\frac \rho 2.$ We apply successively
\begin{itemize}
\item[-]
the mean-value property,
\item [-] the fact that the function
$$u+iv\in T_\Omega \mapsto \Delta^{\nu-\frac nr} (v)$$
is uniformly bounded below on every Euclidean ball $B(z, \frac \rho 2)$ when $z$ lies on $K$ and
\item [-] the second part of Theorem \ref{HL},
\end{itemize}
to obtain that 
$$
\begin{array}{clcr}
|f(z)|&=\frac 1{|B(z, \frac \rho 2)|}\left \vert \int_{B(z, \frac \rho 2)} f(u+iv)dudv\right \vert\\
&\leq \frac C{|B(z, \frac \rho 2)|}\int_{B(z, \frac \rho 2)} |f(u+iv)|\Delta^{\nu-\frac nr} (v)dudv\\
&\leq \frac C{|B(z, \frac \rho 2)|}\int_0^{\mu (B(z, \frac \rho 2))} f^\star (t)dt\\
&\leq \frac C{|B(z, \frac \rho 2)|}\int_0^{\mu (K_\rho)} t^{\frac 1p}f^\star (t)t^{\frac 1{p'}}\frac {dt}t\\
&\leq \frac {C ||f||_{p, \infty}}{|B(z, \frac \rho 2|}\int_0^{\mu (K_\rho)} t^{\frac 1{p'}}\frac {dt}t \leq C_K ||f||_{p, \infty}
%&\leq \frac C{|B(z, \frac \rho 2)|}(\int_0^{\mu (K_\rho)} (t^{\frac 1p}f^\star (t))^q \frac {dt}t)^{\frac 1q}(\int_0^{\mu (K_\rho)} t^{\frac {q'}{p'}}\frac {dt}t)^{\frac 1{q'}}\\
%&=C_K \int_0^\infty (t^{\frac 1p}f^\star (t))^q \frac {dt}t)^{\frac 1q}
\end{array}
$$
for each $z\in K$ with $K_\rho = \bigcup_{z\in K} \overline {B}(z, \frac \rho 2)$ and $C_K = Cp'\frac 1{|B(z, \frac \rho 2)|}(\mu (K_\rho))^{\frac 1{p'}}.$ %At the latter but one line, we applied the H\"older inequality. 
We recall that there is a positive constant $C_n$ such that for all $z\in \mathbb C^n$ and $\rho >0,$ we have $|B(z, \rho)|=C_n \rho^{2n}$ and we check easily that $\mu (K_\rho)<\infty.$\\
%Now suppose $1< p< \infty, \hskip 2truemm q =\infty.$ Then 
%$$
%\begin{array}{clcr}
%|f(z)&\leq \frac C{|B(z, \frac \rho 2)|}\int_0^{\mu (K_\rho)} t^{\frac 1p}f^\star (t)t^{\frac 1{p'}}\frac {dt}t\\
%&\leq C'_K \sup \limits_{t>0} t^{\frac 1p}f^\star (t)
%\end{array}
%$$
%with $C'_K = \frac {Cp'}{|B(z, \frac \rho 2)|}(\mu (K_\rho))^{\frac 1{p'}}.$\\
%Finally the case where $1< p< \infty, \hskip 2truemm q =1$ can be treated in the same way.
\end{proof}

%\begin{thm}
%Let $1< p\leq \infty$ and $1\leq q \leq \infty.$ Equipped with the norm $||\cdot||_{p, q},$ the Bergman-Lorentz space $A_\nu (p, q)$ is a Banach space.
%\end{thm}

\begin{proof}[\textbf {Proof of Theorem \ref{th1}}]
(1) We suppose that $\nu \leq \frac nr -1.$ It suffices to show that $A_\nu (p, \infty) =\{0\}$ for all $1< p< \infty.$ Given $F\in A_\nu (p, \infty),$ we first prove the following lemma.

\begin{lemma}
For general $\nu \in \mathbb R,$ the following estimate holds.
\begin{equation}\label{1}
|F(x+i(y+\mathbf e))|\Delta^{\frac {\nu+\frac nr}{p}} (y+\mathbf e) \leq C_p ||F||_{p, \infty} \quad \quad (x+iy\in T_\Omega).
\end{equation}
\end{lemma}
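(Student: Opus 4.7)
The plan is to prove the pointwise estimate by exploiting the mean value property of the holomorphic function $F$ on a Bergman ball of \emph{fixed} radius centered at the point $z_0 := x+i(y+\mathbf e)$, then converting the unweighted integral into a weighted one and applying the second part of the Hardy--Littlewood Theorem \ref{HL} together with the definition of $\|\cdot\|_{p,\infty}^\star$. Fix $\delta>0$ once and for all, and let $B = B(z_0,\delta)$ denote the Bergman ball of radius $\delta$ around $z_0$.

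First I would use the mean value property to write
$$|F(z_0)| \leq \frac{1}{|B|}\int_B |F(u+iv)|\,du\,dv = \frac{1}{|B|}\int_B |F(u+iv)|\,\Delta^{\frac{n}{r}-\nu}(v)\,d\mu(u+iv).$$
By Corollary \ref{Bergman distance}, $\Delta^{\alpha}(v) \asymp \Delta^{\alpha}(y+\mathbf e)$ uniformly on $B$ for any real $\alpha$, with constants depending only on $\delta$. Hence $\Delta^{\frac{n}{r}-\nu}(v)$ can be pulled out of the integral, giving
$$|F(z_0)| \leq C\,\frac{\Delta^{\frac{n}{r}-\nu}(y+\mathbf e)}{|B|}\int_B |F|\,d\mu.$$
Applying the second assertion of Theorem \ref{HL} to the positive measurable function $|F|$, together with the pointwise bound $F^\star(t) \leq t^{-1/p}\|F\|^\star_{p,\infty}$ coming from the definition of $\|\cdot\|^\star_{p,\infty}$, I obtain
$$\int_B |F|\,d\mu \leq \int_0^{\mu(B)} F^\star(t)\,dt \leq p'\,\mu(B)^{1/p'}\,\|F\|^\star_{p,\infty}.$$

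The remaining task is to compute the size of $|B|$ and $\mu(B)$. Here I invoke Lemma 3.5: since the measure $\Delta^{-2n/r}(v)\,du\,dv$ is $\mathbb R^n\times H$-invariant and the group $\mathbb R^n\times H$ acts simply transitively on $T_\Omega$ preserving the Bergman distance, the invariant measure of $B$ equals a constant $c(\delta)$ independent of $z_0$. Combining this with Corollary \ref{Bergman distance} yields
$$|B| \asymp \Delta^{2n/r}(y+\mathbf e), \qquad \mu(B) \asymp \Delta^{\nu+n/r}(y+\mathbf e).$$

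Plugging these comparisons into the chain of inequalities above gives
$$|F(z_0)| \leq C\,\|F\|^\star_{p,\infty}\,\Delta^{\frac{n}{r}-\nu}(y+\mathbf e)\,\Delta^{-\frac{2n}{r}}(y+\mathbf e)\,\Delta^{\frac{1}{p'}(\nu+\frac{n}{r})}(y+\mathbf e),$$
and a short computation of the exponent shows that the total power of $\Delta(y+\mathbf e)$ is $-\frac{1}{p}(\nu+\frac{n}{r})$. Rearranging and replacing $\|F\|^\star_{p,\infty}$ by $\|F\|_{p,\infty}$ via Lemma 2.9 yields the claimed estimate \eqref{1}. The main technical step is the simultaneous control of $|B|$ and $\mu(B)$; once Lemma 3.5 and Corollary \ref{Bergman distance} are invoked, the rest is bookkeeping of exponents.
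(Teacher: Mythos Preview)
Your argument is correct and follows essentially the same route as the paper: a sub-mean-value inequality over a Bergman ball of fixed radius, conversion to the weighted measure via Corollary \ref{Bergman distance}, the Hardy--Littlewood bound $\int_B |F|\,d\mu \le p'\,\mu(B)^{1/p'}\|F\|_{p,\infty}$, and the estimate $\mu(B)\asymp \Delta^{\nu+n/r}(y+\mathbf e)$ from Lemma~3.5 and Corollary~\ref{Bergman distance}. The only cosmetic difference is that the paper starts directly from the inequality $|F(z_0)|\le C\int_{B_{\mathrm{berg}}(z_0,1)}|F|\,\Delta^{-2n/r}(v)\,du\,dv$ quoted from \cite{BBGNPR}, which is precisely what you call ``the mean value property'' on a Bergman ball; since the classical mean value identity holds on Euclidean balls, you might cite \cite{BBGNPR} (or the comparison between Bergman and Euclidean balls) to justify that first step rather than asserting it outright.
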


\begin{proof} [Proof of the Lemma]
We recall the following inequality \cite{BBGNPR}:
\begin{equation*}
|F(x+i(y+\mathbf e))|\leq C\int_{d(x+i(y+\mathbf e), \hskip 1truemm u+iv)<1} |F(u+iv)|\frac {dudv}{\Delta^{\frac {2n}r} (v)}
\end{equation*}
\begin{equation}\label{2} 
\leq C' \Delta^{-\nu-\frac nr} (y+\mathbf e)\int_{d(x+i(y+\mathbf e), \hskip 1truemm u+iv)<1} |F(u+iv)|d\mu (u+iv).
\end{equation}
The latter inequality follows by Corollary \ref{Bergman distance}. Now by Theorem 2.2, we have
\begin{equation*}
\int_{d(x+i(y+\mathbf e), u+iv)<1} |F(u+iv)|d\mu (u+iv)
\leq \int_0^{\mu \left (B_{berg}(x+i(y+\mathbf e), 1)\right )} t^{\frac 1p}F^\star (t)t^{\frac 1{p'}}\frac {dt}t
\end{equation*}
\begin{equation}\label{3}
\leq p'||F||_{p, \infty}\left (\mu (B_{berg}(x+i(y+\mathbf e), 1)\right )^{\frac 1{p'}},
\end{equation}
where $B_{berg} (\cdot, \cdot)$ denotes the Bergman ball in $T_\Omega.$ By Lemma 3.5 and Corollary 3.6, we obtain that 
\begin{equation}\label{4}
\mu \left (B_{berg}(x+i(y+\mathbf e), 1)\right ) \simeq \Delta^{\nu+\frac nr} (y+\mathbf e).
\end{equation}
Then combining  (\ref{2}), (\ref{3}) and (\ref{4}) gives the announced estimate (\ref{1}).
\end{proof}

We next deduce that the function 
$$z\in T_\Omega \mapsto F(z+i\mathbf e)\Delta^{-\alpha} (z+i\mathbf e)$$
belongs to the Bergman space $A^1_\nu$ when $\alpha$ is sufficiently large. We distinguish two cases: 1) $\nu \leq -\frac nr;$ 2) $-\frac nr < \nu \leq \frac nr -1.$\\
{\underline {Case}} 1). We suppose that $\nu \leq -\frac nr.$ We take $\alpha >\frac {-\nu -\frac nr}p$ and we apply assertion (1) of Lemma 3.4  to get
$$
\left \vert F(x+i(y+\mathbf e))\Delta^{-\alpha} (x+i(y+\mathbf e))\right \vert$$
%\begin{array}{clcr}
$$=\left \vert F(x+i(y+\mathbf e))\right \vert \left \vert \Delta^{-\alpha -\frac {\nu +\frac nr}p} (x+i(y+\mathbf e))\right \vert \left \vert \Delta^{\frac {\nu +\frac nr}p} (x+i(y+\mathbf e))\right \vert$$
$$\leq\left \vert F(x+i(y+\mathbf e))\right \vert \left \vert \Delta^{-\alpha -\frac {\nu +\frac nr}p} (x+i(y+\mathbf e))\right \vert \left \vert\Delta^{\frac {\nu +\frac nr}p} (y+\mathbf e)\right \vert$$
$$\leq C_p ||F||_{p, \infty}|\Delta^{-\alpha -\frac {\nu +\frac nr}p} (x+i(y+\mathbf e))|.
%\end{array}
$$
For the latter inequality, we applied estimate (\ref{1}) of Lemma 3.12. The conclusion follows because by  assertion (2) of Lemma 3.4, the function $\Delta^{-\alpha -\frac {\nu +\frac nr}p} (x+i(y+\mathbf e))$ is integrable on $T_\Omega$ when $\alpha$ is sufficiently large.\\
{\underline {Case}} 2). We suppose that $-\frac nr < \nu \leq \frac nr -1.$ Since $\nu +\frac nr >0$ and $\Delta (y+\mathbf e) \geq \Delta (y)$ by Lemma 3.1, it follows from (\ref{1}) that the function $z\in T_\Omega \mapsto F(z+i\mathbf e)$ is bounded on $T_\Omega.$ The conclusion easily follows.

Finally we remind that $A^1_\nu =\{0\}$ if $\nu \leq \frac nr -1$ (cf. e.g. \cite{BBGNPR}). We conclude that the function $F(\cdot +i\mathbf e)$ vanishes identically on $T_\Omega.$ An application of the analytic continuation principle then implies the identity $F\equiv 0$ on $T_\Omega.$
\vskip 2truemm
(2) We suppose that $\nu >\frac nr -1.$ It suffices to show that $A_\nu (p, q)$ is a closed subspace of the Banach space $(L_\nu (p, q), ||\cdot||_{p, q}).$ For $p=\infty,$ the interesting case is $q=\infty$ and then $A_\nu (p, q) = A_\nu^\infty;$ the relevant result is easy to obtain. \\
We next suppose that $1< p< \infty$ and $1\leq q \leq \infty.$ In view of Lemma 3.11, every Cauchy sequence $\{f_m\}_{m=1}^\infty$ in $\left (A_\nu (p, q), ||\cdot||_{p, q}\right )$ converges to a holomorphic function $f:T_\Omega \rightarrow \mathbb C$ on compact sets in $\mathbb C^n$ contained in $T_\Omega.$ On the other hand, since the sequence  $\{f_m\}_{m=1}^\infty$  is a Cauchy sequence in the Banach space $\left (L_\nu (p, q), ||\cdot||_{p, q}\right ),$ it converges with respect to the $L_\nu (p, q)$-norm to a function $g\in L_\nu (p, q).$ Now by Proposition 2.10, this sequence contains a subsequence $\{f_{m_k}\}_{k=1}^\infty$ which converges $\mu$-a.e. to $g.$ The uniqueness of the limit implies that $f=g$ a.e. We have proved that the Cauchy sequence $\{f_m\}$
in $(A_\nu (p, q), ||\cdot||_{p, q})$ converges in $(A_\nu (p, q), ||\cdot||_{p, q})$ to the function $f.$
\end{proof}

\section{Density in Bergman-Lorentz spaces. Proof of Theorem \ref{th2}}
%\label{3.11}
\subsection{Density in Bergman-Lorentz spaces.}
We adopt the following notation given in the introduction:
$$Q_\nu = 1+\frac \nu{\frac nr -1}.$$
We shall refer to the following  result. For its proof, consult \cite{S} and \cite{BBGNPR}.

\begin{thm}\label{thm}
Let $\nu >\frac nr -1.$ The weighted Bergman projector $P_\gamma, \hskip 1truemm \gamma \geq \nu + \frac \nu r -1$ (resp. the Bergman projector $P_\nu)$ extends to a bounded  operator from $L^p_\nu$ to $A_\nu^p$ for all $1\leq p < Q_\nu$ (resp. for all $1+Q_\nu^{-1} <  p < 1+ Q_\nu).$
\end{thm}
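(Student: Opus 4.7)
The statement is the $L^p_\nu$-boundedness theorem for weighted Bergman projectors on tube domains over symmetric cones, whose proof the authors attribute to \cite{S} and \cite{BBGNPR}. The unifying idea is to analyze the explicit integral representation
\begin{equation*}
P_\gamma f(z) = c_\gamma \int_{T_\Omega} \Delta^{-(\gamma+\frac nr)}\!\left(\tfrac{z-\bar w}{i}\right) f(w)\,\Delta^{\gamma-\frac nr}(\Im m\, w)\,du\,dv
\end{equation*}
(with $w = u+iv$), combining Schur's test when the kernel has sufficient decay and a finer off-diagonal plus interpolation argument when it does not.

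For the first assertion ($P_\gamma$ with $\gamma \ge \nu + \tfrac nr -1$ on $1 \le p < Q_\nu$), I would apply Schur's test with trial functions of the form $\varphi_s(z) = \Delta^{-s}(\Im m\, z)$. Using the pointwise bound $|\Delta^{-(\gamma+\frac nr)}((z-\bar w)/i)| \le \Delta^{-(\gamma+\frac nr)}(\Im m\, z + \Im m\, w)$ from Lemma 3.4(1), together with the Beta-type identity $\int_\Omega \Delta^{-\alpha}(y+v)\Delta^{\beta-\frac nr}(y)\,dy = C\,\Delta^{\beta-\alpha}(v)$ (which is the heart of Lemma 3.4(2)), Schur's two inequalities reduce to purely algebraic conditions on $s, p, \nu, \gamma$. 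A routine computation exhibits an admissible exponent $s$ precisely when $1 \le p < Q_\nu$ and the kernel power $\gamma + \tfrac nr$ is large enough for the relevant integrals to converge; the hypothesis $\gamma \ge \nu + \tfrac nr -1$ is exactly the slack this computation requires.

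For the second assertion ($P_\nu$ on $1+Q_\nu^{-1}<p<1+Q_\nu$), Schur's test is no longer effective because the kernel decay is minimal. The strategy is fourfold: (i) $P_\nu$ is the orthogonal projector on $L^2_\nu$, hence trivially bounded at $p=2$; (ii) by self-adjointness with respect to the $L^2_\nu$-pairing, boundedness on $L^p_\nu$ is equivalent to boundedness on $L^{p'}_\nu$, so it suffices to treat the one-sided range $2 \le p < 1+Q_\nu$; (iii) following \cite{S} and \cite{BBGNPR}, one performs a Whitney-type decomposition of $T_\Omega$ into Bergman balls of fixed bounded hyperbolic diameter, dominating $P_\nu$ by a discretized operator whose off-diagonal contributions are controlled by sharp pointwise estimates on $|B_\nu(z,w)|$; (iv) interpolating the resulting near-endpoint bound with the trivial $L^2_\nu$ estimate (e.g.\ via the Marcinkiewicz theorem) recovers the full open range.

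The principal obstacle lies in step (iii) of the second part: the borderline integrability of $B_\nu(z,w)$ forbids any approach based solely on absolute-value majorization, so one must exploit the geometry of the cone through a Whitney decomposition adapted to the $\mathbb R^n \times H$-invariant Bergman metric $d$, together with the group-invariance of the measure $\Delta^{-\frac{2n}r}(y)\,dx\,dy$ (Lemma 3.5). The endpoints $p = 1+Q_\nu^{\pm 1}$ reflect deep geometric features of the symmetric cone, and enlarging the range for $r \ge 3$ is precisely the open problem motivating Section 6.
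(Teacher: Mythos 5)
First, note that the paper does not prove this statement at all: Theorem 4.1 is quoted from the literature with the indication to consult \cite{S} and \cite{BBGNPR}, so your sketch has to be measured against those arguments. Your general taxonomy (Schur-type test for the weighted projectors, something finer for $P_\nu$) is right, but both halves contain genuine gaps.

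For the first assertion, Schur's test with the scalar powers $\varphi_s(z)=\Delta^{-s}(y)$, $z=x+iy$, does not give the stated range. Carrying out exactly the computation you describe (the $x$-integral produces $\Delta^{-\gamma}(y+v)$, then the Beta-type integral over $\Omega$), the two Schur inequalities reduce, for $\gamma$ large, to $sp'>\frac nr-1$ and $sp<\nu-\frac nr+1$, and an admissible $s$ exists only when $p<\frac{\nu}{\frac nr-1}=Q_\nu-1$. So in rank $r\geq 2$ the scalar test always misses the interval $[Q_\nu-1,Q_\nu)$; for instance for the light cone in $\mathbb R^3$ ($n=3$, $r=2$) with $\nu=0.6$ one gets only $p<1.2$, not even $p=2$, whereas $Q_\nu=2.2$. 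The sharp range $1\leq p<Q_\nu$ in \cite{S} and \cite{BBGNPR} is reached by running the Schur/Forelli--Rudin scheme with the \emph{generalized} power functions $\Delta_{\vec s}$ (vector exponents attached to the triangular decomposition of $\Omega$) and the corresponding sharp integral formulas; this is the essential point, not a routine variant of your computation. Also, Schur's test degenerates at $p=1$; that endpoint is a separate Fubini argument, and it is there that the hypothesis $\gamma\geq\nu+\frac nr-1$ genuinely enters.

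For the second assertion, steps (i)--(ii) are fine, but step (iii) as described cannot succeed: any bound on $P_\nu$ obtained from the size $|B_\nu(z,w)|$ alone --- Schur, a positive discretized majorant, or off-diagonal control by pointwise kernel estimates --- is dominated by the positive operator $P_\nu^+$, which is bounded on $L^p_\nu$ only for $Q_\nu'<p<Q_\nu$. Since $Q_\nu>2$, the sub-interval $[Q_\nu,1+Q_\nu)$ of the asserted range (and its dual counterpart near $1+Q_\nu^{-1}$) is invisible to such arguments. The actual proofs (\cite{BBGR}, recounted in \cite{BBGNPR} and used in \cite{BBGRS}) exploit cancellation: the Paley--Wiener/Laplace-transform description of holomorphic functions on $T_\Omega$, a Whitney decomposition of the cone on the Fourier side, Plancherel in the $x$-variable and the resulting Littlewood--Paley/Besov norm equivalences, combined with duality (Hardy-type inequalities). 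Finally, step (iv) has nothing to interpolate with: your sketch provides no (restricted) weak-type estimate at or beyond the endpoint $p=1+Q_\nu$, and endpoint information of that kind is precisely the open problem discussed in Question 1 of this paper; the cited proofs establish each $p$ in the open interval directly rather than by endpoint interpolation.
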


The following corollary follows from a combination of Theorem \ref{thm}, Theorem 2.16 and Theorem 2.17.

\begin{cor}
Let $\nu >\frac nr -1.$ The weighted Bergman projector $P_\gamma, \hskip 1truemm \gamma \geq \nu + \frac \nu r -1$ (resp. the Bergman projector $P_\nu)$ extends to a bounded  operator from $L_\nu (p, q)$ to $A_\nu (p, q)$ for all  $1 <p < Q_\nu$ (resp. for all $1+Q_\nu^{-1} <  p < 1+ Q_\nu)$ and $1\leq q\leq \infty.$
\end{cor}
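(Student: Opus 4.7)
The plan is to obtain both statements by a single application of real interpolation, combining the endpoint boundedness on Lebesgue spaces from Theorem \ref{thm} with the characterization of Lorentz spaces as real interpolation spaces between Lebesgue spaces given in assertion a) of Theorem 2.17.

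Fix $\nu > \frac{n}{r}-1$, and treat first the weighted projector $P_\gamma$ with $\gamma \geq \nu + \frac{\nu}{r}-1$; the argument for $P_\nu$ is identical after replacing the open interval $(1, Q_\nu)$ by $(1+Q_\nu^{-1}, 1+Q_\nu)$. Given $p$ in the open interval and $1 \leq q \leq \infty$, I would choose $p_0, p_1$ inside the same interval with $p_0 < p < p_1$, and define $\theta \in (0,1)$ by $\frac{1}{p} = \frac{1-\theta}{p_0} + \frac{\theta}{p_1}$. By Theorem \ref{thm}, $P_\gamma$ is bounded on each of $L_\nu^{p_0}$ and $L_\nu^{p_1}$, hence admissible for the couple $(L_\nu^{p_0}, L_\nu^{p_1})$ regarded as both source and target. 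Theorem 2.16 then yields a bounded extension of $P_\gamma$ on $[L_\nu^{p_0}, L_\nu^{p_1}]_{\theta, q}$, and assertion a) of Theorem 2.17 identifies this real interpolation space with $L_\nu(p, q)$ (with equivalent norms). This gives boundedness of $P_\gamma$ from $L_\nu(p, q)$ into itself.

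To finish, I must check that the image actually lies in the holomorphic subspace $A_\nu(p, q)$. Since $L_\nu(p, q) \subset L_\nu^{p_0} + L_\nu^{p_1}$, any $f \in L_\nu(p, q)$ may be written as $f = f_0 + f_1$ with $f_i \in L_\nu^{p_i}$, and Theorem \ref{thm} yields $P_\gamma f_i \in A_\nu^{p_i}$. Hence $P_\gamma f = P_\gamma f_0 + P_\gamma f_1$ is holomorphic on $T_\Omega$, and combined with $P_\gamma f \in L_\nu(p, q)$ this places $P_\gamma f$ in $A_\nu(p, q)$.

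There is no substantive obstacle here; the only point deserving a moment's attention is the coherence of notation, namely the fact that the abstract interpolated extension of $P_\gamma$ produced by Theorem 2.16 coincides with the decomposition-based action $f_0 + f_1 \mapsto P_\gamma f_0 + P_\gamma f_1$. This is automatic because $P_\gamma$ is already well-defined and linear on $L_\nu^{p_0} + L_\nu^{p_1}$, independently of the chosen decomposition. The corollary is thus a routine combination of Theorem \ref{thm}, Theorem 2.16 and Theorem 2.17.
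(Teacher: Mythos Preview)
Your proof is correct and follows exactly the approach indicated by the paper, which states only that the corollary ``follows from a combination of Theorem \ref{thm}, Theorem 2.16 and Theorem 2.17.'' Your additional paragraph verifying that the interpolated operator lands in the holomorphic subspace $A_\nu(p,q)$ is a welcome elaboration of a detail the paper leaves implicit.
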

The following proposition was proved in \cite{BBGNPR}.

\begin{prop}
We suppose that $\nu, \gamma > \frac nr -1.$ Let $1\leq p< \infty.$  The subspace $A_\nu^p \cap A^t_\gamma$ is dense in  the Banach space $A_\nu^p.$ Moreover, if the weighted Bergman projector $P_\gamma$ extends to a bounded operator on $L^p_\nu$ and if $P_\gamma$ is the identity on $A^t_\gamma,$ then $P_\gamma$ is the identity on $A^p_\nu.$
\end{prop}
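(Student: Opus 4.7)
To prove the proposition, I would first establish the density of $A_\nu^p \cap A_\gamma^t$ in $A_\nu^p$ and then deduce the projection identity as a straightforward consequence.

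For the density, the plan is to invoke an atomic (sampling) decomposition for weighted Bergman spaces on tube domains over symmetric cones, a tool which is standard in this setting. Concretely, for $\gamma_0 > 0$ sufficiently large one can construct a discrete Whitney-type sampling sequence $\{z_j\}_{j \in J} \subset T_\Omega$, adapted to the Bergman metric and the simply transitive action of $\mathbb{R}^n \times H$, together with a weighted sequence space $\ell^p_{\gamma_0, \nu}$ such that every $f \in A_\nu^p$ admits a decomposition
$$f(z) = \sum_{j \in J} \lambda_j\, \Delta^{-\gamma_0 - \frac{n}{r}}\!\left(\frac{z - \bar z_j}{i}\right), \qquad \|\{\lambda_j\}\|_{\ell^p_{\gamma_0, \nu}} \lesssim \|f\|_{A_\nu^p},$$
with convergence of the series in the $A_\nu^p$-norm. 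Using the $\mathbb{R}^n \times H$-invariance of the Bergman metric together with assertion (2) of Lemma 3.4, each atom $\Delta^{-\gamma_0 - \frac{n}{r}}\!\left(\frac{z - \bar z_j}{i}\right)$ belongs to every Bergman space $A_\eta^s$ with $1\leq s\leq \infty$ and $\eta > \frac{n}{r} - 1$, as soon as $\gamma_0$ is taken large enough (depending on $s, \eta$). Choosing $\gamma_0$ large enough to handle both pairs $(p, \nu)$ and $(t, \gamma)$ simultaneously, the finite partial sums $f_M := \sum_{j=1}^M \lambda_j \Delta^{-\gamma_0 - \frac{n}{r}}\!\left(\frac{z - \bar z_j}{i}\right)$ then lie in $A_\nu^p \cap A_\gamma^t$ and converge to $f$ in $A_\nu^p$.

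For the projection identity, suppose $P_\gamma$ extends to a bounded operator on $L_\nu^p$ and is the identity on $A_\gamma^t$. Given $f \in A_\nu^p$, the density part furnishes a sequence $\{f_m\} \subset A_\nu^p \cap A_\gamma^t$ with $f_m \to f$ in $A_\nu^p$. Since $f_m \in A_\gamma^t$, the hypothesis gives $P_\gamma f_m = f_m$, and continuity of $P_\gamma$ on $L_\nu^p$ yields
$$P_\gamma f = \lim_{m\to\infty} P_\gamma f_m = \lim_{m\to\infty} f_m = f$$
in $A_\nu^p$.

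The main obstacle is the density assertion itself. A naive regularization of the form $f_{\epsilon, N}(z) := f(z + i\epsilon \mathbf{e})\,\Delta^{-N}\!\left(\frac{z + i\mathbf{e}}{i}\right)$ runs into a genuine tension: the vertical shift by $i\epsilon\mathbf{e}$ makes $f$ bounded on $T_\Omega$ (via the standard pointwise Bergman estimate) and the factor $\Delta^{-N}$ produces the $x$-decay needed by Lemma 3.4(2) for $L^t_\gamma$-integrability, but the latter forces $N$ to be large, which prevents the product from approximating $f$ in $A_\nu^p$. The atomic decomposition resolves this conflict by representing $f$ itself as a convergent series of kernel-type atoms that are intrinsically in $A_\gamma^t$, with the $A_\nu^p$-content encoded in the coefficient sequence rather than in a holomorphic multiplier.
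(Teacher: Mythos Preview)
Your proposal is correct: atomic decomposition does yield the density, and the identity $P_\gamma f = f$ then follows by continuity exactly as you say. However, the paper does not prove this proposition itself; it simply cites \cite{BBGNPR}. The method used there, and reproduced in this very paper for the Lorentz-space analogue (Proposition~4.5, case~(3)), is considerably more elementary than atomic decomposition and is precisely the regularization you dismiss --- but with a different multiplier. Instead of $\Delta^{-N}\!\bigl(\tfrac{z+i\mathbf e}{i}\bigr)$, one uses
\[
F_{m,\alpha}(z)=\Delta^{-\alpha}\!\left(\frac{\tfrac{z}{m}+i\mathbf e}{i}\right)F(z).
\]
The scaling $z\mapsto z/m$ is the key point you are missing: for fixed large $\alpha$ this multiplier is bounded by $1$, it provides enough decay to force $F_{m,\alpha}\in A_\gamma^t$, and as $m\to\infty$ it tends pointwise to $\Delta^{-\alpha}(\mathbf e)=1$, so $F_{m,\alpha}\to F$ in $A_\nu^p$ by dominated convergence. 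Thus the ``genuine tension'' you describe disappears once the multiplier is dilated, and no vertical shift $z\mapsto z+i\epsilon\mathbf e$ is needed either.

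In short, your route works but invokes heavier machinery (sampling sequences, coefficient spaces, convergence of the atomic series in norm), whereas the argument intended by the paper is a one-line dominated-convergence computation with the scaled multiplier above. Both the density and the projection-identity parts of your write-up are otherwise fine.
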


The next corollary is a consequence of Theorem 4.1 and Proposition 4.3.

\begin{cor}\label{4.3}
Let $\gamma > \frac nr -1.$ For all $t\in (1+Q_\gamma^{-1}, 1+Q_\gamma),$ the Bergman projector $P_\gamma$ is the identity on $A^t_\gamma.$
\end{cor}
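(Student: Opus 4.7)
The plan is to apply Proposition 4.3 with $\nu$ replaced by $\gamma$ and with the auxiliary exponent (called $t$ in the statement of Proposition 4.3) chosen to be $2$. That is, we would use Proposition 4.3 in the form: if $P_\gamma$ extends to a bounded operator on $L^p_\gamma$ and if $P_\gamma$ is the identity on $A^2_\gamma$, then $P_\gamma$ is the identity on $A^p_\gamma$. Since our goal is exactly to identify the range of exponents $p$ on which $P_\gamma$ is the identity on $A^p_\gamma$, it suffices to verify the two hypotheses for every $p \in (1+Q_\gamma^{-1}, 1+Q_\gamma)$.

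First, the exponent $2$ is a legitimate choice because $P_\gamma$ is by definition the orthogonal projector from the Hilbert space $L^2_\gamma$ onto its closed subspace $A^2_\gamma$, hence acts as the identity on $A^2_\gamma$. Moreover, $\gamma > \frac{n}{r}-1$ forces $Q_\gamma = 1+\gamma/(\frac{n}{r}-1) > 1$, so that $1+Q_\gamma^{-1} < 2 < 1+Q_\gamma$; in particular $2$ lies in the range we are considering, which is needed implicitly since the density portion of Proposition 4.3 is what allows $2$ to play the role of the auxiliary exponent.

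Second, the boundedness of $P_\gamma$ on $L^p_\gamma$ for every $p \in (1+Q_\gamma^{-1}, 1+Q_\gamma)$ is exactly the second (``resp.'') statement of Theorem \ref{thm}, applied with $\nu = \gamma$. With both hypotheses of Proposition 4.3 in place for the couple $(\gamma, \gamma)$ and the auxiliary exponent $2$, Proposition 4.3 yields that $P_\gamma$ is the identity on $A^p_\gamma$ for every such $p$, which is the desired conclusion upon renaming $p$ as $t$.

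There is no genuine obstacle in this argument; it is a direct combination of the $p=2$ trivial case with the density statement of Proposition 4.3 and the boundedness of $P_\gamma$ on $L^p_\gamma$ provided by Theorem \ref{thm}. The only mild point to keep in mind is that the range $(1+Q_\gamma^{-1}, 1+Q_\gamma)$ is precisely the one dictated by Theorem \ref{thm} for the unshifted projector $P_\gamma$, which is why the corollary cannot be stated on a larger interval using this strategy alone.
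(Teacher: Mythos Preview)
Your proof is correct and follows exactly the route the paper indicates: the paper simply states that the corollary is a consequence of Theorem 4.1 and Proposition 4.3, and you have spelled out precisely how---namely, by taking $\nu=\gamma$ and the auxiliary exponent equal to $2$ in Proposition 4.3, using that $P_\gamma$ is by definition the identity on $A^2_\gamma$, and invoking the boundedness on $L^p_\gamma$ from Theorem 4.1 for $p\in(1+Q_\gamma^{-1},1+Q_\gamma)$.
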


We shall prove the following density result for Bergman-Lorentz spaces.

\begin{prop}\label{density}
We suppose that $\gamma \geq \nu > \frac nr -1, \hskip 2truemm 1< p, t< \infty$ and $1\leq q < \infty.$ The subspace $A_\nu (p, q)\cap A^t_\gamma$ is dense in  the Banach space $A_\nu (p, q)$ in the following three cases.
\begin{enumerate}
\item $p=q;$
\item $\gamma=\nu >\frac nr -1, \hskip 2truemm 1+Q_\nu^{-1}<p, t <1+Q_\nu$ and $1\leq q <p;$
\item $p, q\in (t, \infty).$
\end{enumerate}
\end{prop}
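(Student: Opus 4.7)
The proof treats the three cases in turn. Case (1) is immediate, since $A_\nu(p, p) = A_\nu^p$ by \eqref{Lp}, so the assertion reduces to Proposition 4.3. For Case (2), the strict inequality $q < p$ gives the embedding $L_\nu(p, q) \hookrightarrow L_\nu^p$ by Lemma 2.7, so every $f \in A_\nu(p, q)$ lies in $A_\nu^p$; since $p \in (1 + Q_\nu^{-1}, 1 + Q_\nu)$, Corollary 4.4 yields $P_\nu f = f$. Choosing $\varphi_k \in \mathcal{C}_c^\infty(T_\Omega)$ with $\varphi_k \to f$ in $L_\nu(p, q)$ (Corollary 3.9) and applying $P_\nu$, which is bounded on $L_\nu(p, q)$ (Corollary 4.2) and on $L_\nu^t$ (Theorem 4.1, since $t \in (1 + Q_\nu^{-1}, 1 + Q_\nu)$), the sequence $P_\nu \varphi_k$ converges to $f = P_\nu f$ in $A_\nu(p, q)$; each $\varphi_k$ is bounded and compactly supported, hence lies in $L_\nu^t$, so $P_\nu \varphi_k \in A_\nu^t = A_\gamma^t$, completing Case (2).

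Case (3) is the essential and most delicate case, because the hypotheses do not constrain $p$ or $q$ to any boundedness interval of a Bergman projector. The plan is a multiplicative approximation. Fix $\alpha > 0$ (to be taken large below) and, for each $R > 0$, set
$$h_R(z) := \Delta^\alpha(R\mathbf{e})\,\Delta^{-\alpha}\left(\frac{z + iR\mathbf{e}}{i}\right), \qquad z \in T_\Omega.$$
Lemma 3.4(1) combined with Lemma 3.1 yields the pointwise bound $|h_R(z)| \le 1$, and the homogeneity $\Delta(\lambda \mathbf{e}) = \lambda^{n/r}\Delta(\mathbf{e})$ gives $h_R(z) \to 1$ as $R \to \infty$ for every fixed $z$. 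I take the candidate approximating sequence to be $\{f h_R\}_{R > 0}$. From $|f h_R| \le |f|$ pointwise and the standard fact that multiplication by an $L^\infty$ function is contractive on every Lorentz norm (through the rearrangement inequality), $f h_R \in A_\nu(p, q)$ with $\|f h_R\|_{p, q} \le \|f\|_{p, q}$; dominated convergence in Lorentz spaces, available since $q < \infty$, then yields $f h_R \to f$ in $A_\nu(p, q)$ as $R \to \infty$.

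The remaining step, which I expect to be the main obstacle, is to show $f h_R \in A_\gamma^t$ for each $R > 0$. Rewriting $d\mu_\gamma = \Delta^{\gamma - \nu}\,d\mu_\nu$ gives
$$\int_{T_\Omega} |f h_R|^t\,d\mu_\gamma = \int_{T_\Omega} |f|^t\,G_R\,d\mu_\nu, \qquad G_R := |h_R|^t\,\Delta^{\gamma - \nu},$$
and Lorentz--Hölder on $(T_\Omega, d\mu_\nu)$ with the conjugate pairs $(p/t, q/t)$ and $(p_2, q_2) := (pt/(p - t),\,qt/(q - t))$, combined with the rescaling identity $\|\,|f|^t\,\|_{L_\nu(p/t, q/t)} = \|f\|_{L_\nu(p, q)}^t$, reduces the estimate to controlling $\|G_R\|_{L_\nu(p_2, q_2)}$. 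The positivity of both $p_2$ and $q_2$ is precisely where the assumption $p, q > t$ enters; without it the method collapses. To handle $G_R$, I would verify by a homogeneity rescaling $z \mapsto R z$ and Lemma 3.4(2) applied with $iR\mathbf{e}$ in place of $i\mathbf{e}$ that, for every finite $r > 0$ and every $\alpha$ exceeding a threshold $\alpha_\ast(r, \gamma, \nu, t)$, one has $G_R \in L_\nu^r$; the required condition $\nu + r(\gamma - \nu) > \tfrac{n}{r} - 1$ is automatic from $\gamma \ge \nu > \tfrac{n}{r} - 1$. Choosing $r_0 < p_2 < r_1$ and $\alpha$ larger than both $\alpha_\ast(r_0, \cdots)$ and $\alpha_\ast(r_1, \cdots)$, the identification $L_\nu(p_2, q_2) = [L_\nu^{r_0}, L_\nu^{r_1}]_{\theta, q_2}$ from Theorem 2.17 together with the continuous embedding of $L_\nu^{r_0} \cap L_\nu^{r_1}$ into every intermediate real interpolation space places $G_R$ in $L_\nu(p_2, q_2)$, and the proof is complete.
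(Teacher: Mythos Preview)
Your proof is correct and follows the paper's strategy closely: Cases (1) and (2) are identical to the paper's arguments, and in Case (3) you employ the very same holomorphic multiplier as the paper (after simplification your $h_R$ equals the paper's $\Delta^{-\alpha}\bigl(\tfrac{z/m+i\mathbf e}{i}\bigr)$ with $m=R$), and the verification that $fh_R\in A_\gamma^t$ via the Lorentz--H\"older pairing of $|f|^t\in L_\nu(p/t,q/t)$ against the multiplier factor in the dual Lorentz space is precisely the paper's argument (the paper invokes Example~3.10, while you reprove that example via Lemma~3.4(2) and interpolation).

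The one genuine difference lies in how density is concluded in Case~(3). The paper argues indirectly: it fixes a bounded linear functional $l$ on $A_\nu(p,q)$ vanishing on $A_\nu(p,q)\cap A_\gamma^t$, extends it by Hahn--Banach to $L_\nu(p,q)$, represents it via the duality Theorem~\ref{duality} as integration against some $\varphi\in L_\nu(p',q')$, and then passes to the limit $m\to\infty$ in $\int F_{m,\alpha}\varphi\,d\mu=0$ by ordinary dominated convergence in $L^1$. You instead prove directly that $fh_R\to f$ in $L_\nu(p,q)$ by invoking dominated convergence in the Lorentz space itself (valid since $q<\infty$ gives absolute continuity of the norm). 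Your route is the more elementary one: it bypasses Hahn--Banach and the duality theorem entirely and yields an explicit approximating sequence, at the modest cost of appealing to a Lorentz-space dominated convergence theorem that is not stated in the paper but is standard. Two small slips to correct: the homogeneity of $\Delta$ is of degree $r$, not $n/r$ (so $\Delta(R\mathbf e)=R^r$, which is what your argument actually needs), and the conjugate exponents should read $(p/t)'=p/(p-t)$ and $(q/t)'=q/(q-t)$ rather than $pt/(p-t)$ and $qt/(q-t)$.
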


\begin{proof}
%In view of Lemma 2.10, we shall show the proposition for the quasi-norm $||\cdot||^\star_{p, q}$ on $A_\nu (p, q).$

$\it {1)}$ For $p=q,\hskip 2truemm A_\nu (p, p)=A_\nu^p,$ the result is known, cf. e.g. \cite{BBGNPR}.
\vskip 1truemm
$\it {2)}$  We suppose now that $\gamma=\nu >\frac nr -1, \hskip 2truemm 1+Q_\nu^{-1}<p, t <1+Q_\nu$ and $1\leq q <p.$ Given $F\in A_\nu (p, q),$ by Corollary \ref{Schwartz}, there exists a sequence $\{f_m\}_{m=1}^\infty$ of 
$\mathcal C^\infty$ functions with compact support on $T_\Omega$ such that $\{f_m\}_{m=1}^\infty \rightarrow F \hskip 2truemm (m\rightarrow \infty)$ in $L_\nu (p, q).$ Each $f_m$ belongs to $L_\nu^t \cap L_\nu (p, q).$ By Corollary 4.2 and Theorem 4.1, the Bergman projector $P_\nu$ extends to a bounded operator on $L_\nu (p, q)$ and on $L_\nu^t$ respectively. So $P_\nu f_m \in A_\nu (p, q)\cap A^t_\nu$ and $\{P_\nu f_m\}_{m=1}^\infty \rightarrow P_\nu F \hskip 2truemm (m\rightarrow \infty)$ in $A_\nu (p, q).$ Notice that $A_\nu (p, q) \subset A_\nu^p$ because $q<p.$ By Corollary 4.4, we obtain that $P_\nu F=F.$ This finishes the proof of assertion $\it 2).$

%, the Bergman projector $P_\nu$ extends to a bounded operator on $L_\nu (p, q)$ for $1+Q_\nu^{-1}<p <1+Q_\nu$ and $1\leq q <p.$ Moreover by Theorem 4.1 and Corollary 4.4, $P_\nu$ is bounded on $L_\nu^t$ and $P_\nu$ is the identity on $A_\nu^t.$ So $P_\nu f_m \in A_\nu (p, q)\cap A^t_\nu$ and $\{P_\nu f_m\}_{m=1}^\infty \rightarrow P_\gamma F \hskip 2truemm (m\rightarrow \infty)$ in $A_\nu (p, q).$ Notice that $A_\nu (p, q) \subset A_\nu^p$ because $q<p.$ By  Corollary 4.4, we obtain $P_\nu F=F.$ This finishes the proof of assertion $\it 2).$
\vskip 1truemm
$\it {3)}$ Let $l$ be a bounded linear functional on $A_\nu (p, q)$ such that $l(F) =0$ for all $F\in A_\nu (p, q)\cap A^t_\gamma.$ We must show that $l\equiv 0$ on $A_\nu (p, q).$\\
We first prove that the holomorphic function $F_{m, \alpha}$ defined on $T_\Omega$ by
$$F_{m, \alpha} (z) = \Delta^{-\alpha} \left (\frac {\frac zm +i\mathbf e}i\right )F (z)$$
belongs to $A_\nu (p, q)\cap A^t_\gamma$ when $\alpha >0$ is sufficiently large. By  Lemma 3.4 (assertion (1)) and Lemma 3.1, we have
$$\left \vert \Delta^{-\alpha} \left (\frac {\frac zm +i\mathbf e}i \right ) \right \vert \leq \Delta^{-\alpha}  (\mathbf e)=1.$$
So $\left \vert F_{m, \alpha}\right \vert \leq |F|;$ this implies that $F_{m, \alpha} \in A_\nu (p, q).$\\
We next show that $F_{m, \alpha} \in A_\gamma^t$ when $\alpha$ is large. We obtain that
$$
\begin{array}{clcr}
I&:=\int_{T_\Omega} {\left \vert \Delta^{-\alpha} \left (\frac {\frac {x+iy}m +i\mathbf e}i \right )F(x+iy) \right \vert}^{t} \Delta^{\gamma -\frac nr} (y)dxdy\\
&=\int_{T_\Omega} {\left \vert \Delta^{-\alpha} \left (\frac {\frac {x+iy}m +i\mathbf e}i \right )F(x+iy) \right \vert}^{t} \Delta^{\gamma -\nu} (y)d\mu (x+iy).
\end{array}
$$  
We take $\alpha t > \gamma - \nu.$ By  Lemma 3.4 (assertion (1)) and Lemma 3.1, we obtain that
$$I\leq \int_{T_\Omega} {\left \vert \Delta^{-\alpha t +\gamma -\nu} \left (\frac {\frac {x+iy}m +i\mathbf e}i \right ) \right \vert}|F(x+iy)|^t d\mu (x+iy).$$
Observing that $(|f|^t)^\star = (f^\star)^t,$ we notice  that $|F|^t \in L_\nu (\frac pt, \frac qt).$ By Theorem \ref{duality}, it suffices to show that the function $z\in T_\Omega \mapsto \Delta^{-\alpha t +\gamma -\nu} \left (\frac {\frac {x+iy}m +i\mathbf e}i \right )$ belongs to $L_\nu ({(\frac pt)', (\frac qt)'})$ when $\alpha$ is large. The desired conclusion follows by Example 3.10.\\
So our assumption implies that
\begin{equation}\label{dual1}
l(F_{m, \alpha})=0.
\end{equation}
By the Hahn-Banach theorem, there exists a bounded linear functional $\widetilde l$ on $L_\nu (p, q)$ such that $\widetilde l\vert_{L_\nu (p, q)}=l$ and $||\widetilde l||=||l||.$ Furthermore, by Theorem \ref{duality}, there exists a function $\varphi \in L_\nu (p', q')$ such that
$$\widetilde l(f) = \int_{T_\Omega} f(z)\varphi (z)d\mu (z) \quad \quad \forall f\in L_\nu (p, q).$$
We must show that
\begin{equation}\label{dual2}
\int_{T_\Omega} F(z)\varphi (z)d\mu (z)=0 \quad \quad \forall F\in A_\nu (p, q).
\end{equation}
The equation (\ref{dual1}) can be expressed in the form
$$\int_{T_\Omega} F_{m, \alpha}(z)\varphi (z)d\mu (z)=\int_{T_\Omega} \Delta^{-\alpha} \left (\frac {\frac zm +i\mathbf e}i\right )F(z)\varphi (z)d\mu (z)=0.$$
Again by Theorem \ref{duality}, the function $F\varphi$ is integrable on $T_\Omega$ since $F\in L_\nu (p, q)$ and $\varphi \in L_\nu (p', q').$ We also have 
$$\left \vert \Delta^{-\alpha} \left (\frac {\frac zm +i\mathbf e}i\right )\right \vert \leq 1 \quad \quad \forall z\in T_\Omega, \hskip 2truemm m=1,2,\cdots$$
An application of the Lebesgue dominated theorem next gives the announced conclusion (\ref{dual2}).
\end{proof}

We next deduce the following corollary. 

\begin{cor}\label{identity}
Let $\nu >\frac nr -1.$ 
\begin{enumerate}
\item
For all $1< p <Q_\nu$ and $1< q <\infty,$ there exists a real index $\gamma \geq\nu+\frac nr -1$ such that the Bergman projector $P_\gamma$ is the identity on $A_\nu (p, q).$
\item
For all $p\in (1+Q_\nu^{-1}, 1+Q_\nu)$ and $1\leq q <\infty,$ the Bergman projector $P_\nu$ is the identity on $A_\nu (p, q).$
\end{enumerate}

%The Bergman projector $P_\gamma, \hskip 2truemm \gamma \geq\nu +\frac nr -1$ (resp. $P_\nu$) is the identity on $A_\nu (p, q)$ for all $1\leq q< p <Q_\nu$ (resp. $1+Q_\nu^{-1}<p <1+Q_\nu$ and $1\leq q \leq p).$
\end{cor}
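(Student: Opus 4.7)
The plan is to combine Proposition \ref{density} with Corollary \ref{4.3} and the Lorentz-space boundedness of the Bergman projector (the unnumbered-label corollary obtained immediately from Theorem \ref{thm} and Theorem 2.17). The general scheme is the standard one: locate parameters $t$ and $\gamma$ such that (i) $A_\nu(p, q) \cap A_\gamma^t$ is dense in $A_\nu(p, q)$, (ii) $P_\gamma$ is the identity on $A_\gamma^t$, and (iii) $P_\gamma$ is bounded on $L_\nu(p, q)$. Given $F \in A_\nu(p, q)$, pick $f_m \in A_\nu(p, q) \cap A_\gamma^t$ with $f_m \to F$ in $L_\nu(p, q)$; by (ii) we have $P_\gamma f_m = f_m$, while by (iii) we have $P_\gamma f_m \to P_\gamma F$ in $L_\nu(p, q)$, whence $P_\gamma F = F$.

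For part (1), given $1 < p < Q_\nu$ and $1 < q < \infty$, I would first fix any $t$ with $1 < t < \min(p, q)$. Since $Q_\gamma \to \infty$ as $\gamma \to \infty$, one may then choose $\gamma$ large enough that $\gamma \geq \nu + \frac{n}{r} - 1$ and simultaneously $t \in (1 + Q_\gamma^{-1}, 1 + Q_\gamma)$. The third density case of Proposition \ref{density} (which only asks $p, q \in (t, \infty)$ together with $\gamma \geq \nu$) supplies (i); Corollary \ref{4.3} supplies (ii); and boundedness of $P_\gamma$ on $L_\nu(p, q)$ for $\gamma$ in this range and $1 < p < Q_\nu$ supplies (iii). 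The general scheme above then delivers $P_\gamma F = F$ for every $F \in A_\nu(p, q)$.

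For part (2), I would fix $\gamma = \nu$ and split on the relative sizes of $p$ and $q$. If $1 \leq q < p$, pick any $t \in (1 + Q_\nu^{-1}, 1 + Q_\nu)$ and invoke the second case of Proposition \ref{density}. If $q \geq p$, case~2 no longer applies, so instead pick $t \in (1 + Q_\nu^{-1}, p)$---which is nonempty because $p > 1 + Q_\nu^{-1}$---and invoke the third case, noting that $\min(p, q) = p > t$ ensures $p, q \in (t, \infty)$. In either regime, (ii) follows from Corollary \ref{4.3} applied with $\gamma = \nu$, and (iii) is the Lorentz-space boundedness of $P_\nu$ on $L_\nu(p, q)$ for $p \in (1 + Q_\nu^{-1}, 1 + Q_\nu)$. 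The same limiting argument then concludes $P_\nu F = F$.

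The main obstacle is combinatorial rather than analytic: the parameters $t$ and $\gamma$ must simultaneously satisfy the (case-dependent) hypotheses of Proposition \ref{density} and the identity condition $t \in (1 + Q_\gamma^{-1}, 1 + Q_\gamma)$ of Corollary \ref{4.3}. In part~(1) this is handled by sending $\gamma$ to infinity, which widens the identity window to eventually contain any prescribed $t > 1$. In part~(2) one is forced to break into two subcases because case~2 of Proposition \ref{density} is restricted to $q < p$, so the regime $q \geq p$ must be covered via case~3 with a carefully chosen $t < p$.
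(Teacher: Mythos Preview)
Your proof is correct and follows essentially the same route as the paper: use case~(3) of Proposition~\ref{density} together with Corollary~\ref{4.3} and the Lorentz boundedness of $P_\gamma$ (resp.\ $P_\nu$), choosing $t<\min(p,q)$ and, in part~(1), $\gamma$ large enough that $1+Q_\gamma^{-1}<t$. The only cosmetic difference is the case split in part~(2): the paper separates according to whether $q\le 1+Q_\nu^{-1}$ or $q>1+Q_\nu^{-1}$, whereas you split at $q<p$ versus $q\ge p$; both are valid since case~(2) of Proposition~\ref{density} covers the entire range $1\le q<p$.
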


\begin{proof}
{\it (1)} By Corollary 4.4, for all $t\in (1+Q_\gamma^{-1}, 1+Q_\gamma),$ $P_\gamma$ is the identity on $A^t_\gamma.$ Next let $1< p <Q_\nu$ and $1< q <\infty.$ Let the real index $\gamma$ be such that $\gamma \geq\nu+\frac nr -1$ and $1+Q_\gamma^{-1} < \min (p, q).$ We take $t$ such that $1+Q_\gamma^{-1} <t < \min (p, q).$ It follows from the assertion {(\it 3)} of Proposition 4.5 that the subspace $A_\nu (p, q)\cap A_\gamma^t$ is dense in the Banach space $A_\nu (p, q).$ But by Corollary 4.2, $P_\gamma$ extends to a bounded operator on $L_\nu (p, q).$ We conclude then that $P_\gamma$ is the identity on $A_\nu (p, q).$\\
\indent
{\it (2)} By Corollary 4.4, for all $t\in (1+Q_\nu^{-1}, 1+Q_\nu),$ $P_\nu$ is the identity on $A^t_\nu.$ Next let $1+Q_\nu^{-1}< p <1+Q_\nu.$
By Corollary 4.2, $P_\nu$ extends to a bounded operator on $L_\nu (p, q).$ It then suffices to show that the subspace $A_\nu (p, q)\cap A_\nu^t$ is dense in the Banach space $A_\nu (p, q)$ for some $t\in (1+Q_\nu^{-1}, 1+Q_\nu).$ If $1+Q_\nu^{-1} <q <\infty,$ we take $t$ such that $1+Q_\nu^{-1} <t < \min (p, q):$ the conclusion follows by assertion $(3)$ of Proposition 4.5. Otherwise, if $1\leq q \leq 1+Q_\nu^{-1},$ the conclusion follows by assertion $(2)$ of the same proposition.  

  %We distinguish the cases $1+Q_\gamma^{-1} <q <\infty$ and $1<q \leq 1+Q_\gamma^{-1}$ for which we apply assertions (3) and (2) of Proposition 4.4 respectively. 
%Let the real index $\gamma$ be such that $\gamma \geq\nu+\frac nr -1$ and $1+Q_\gamma^{-1} < \min (p, q).$ 
%We take $t$ such that $1+Q_\nu^{-1} <t < \min (p, q).$ It follows from Proposition 4.4 that the subspace $A_\nu (p, q)\cap A_\nu^t$ is dense in the Banach space $A_\nu (p, q).$ But by Corollary 4.2, $P_\nu$ extends to a bounded operator on $L_\nu (p, q).$ We conclude then that $P_\nu$ is the identity on $A_\nu (p, q).$

%For $P_\gamma,$ take $\gamma = \gamma$ in the proof of Proposition 4.4.
\end{proof}

\subsection{Proof of Theorem 1.2.} Combine Corollary 4.2 with Corollary \ref{identity}.

\section{Real interpolation between Bergman-Lorentz spaces. Proof of Theorem \ref{th3}.} 
%We shall first prove  assertion $(1)$ of Theorem \ref{th3}. 
For all $1< p_0 < p_1 < \infty$ and $1\leq q_0, \hskip 1truemm q_1 \leq \infty$ and $0<\theta<1,$ by Defintion 2.13 (the definition of real interpolation spaces), we have at once  
$$[A_\nu (p_0, q_0), A_\nu (p_1, q_1)]_{\theta, q}\hookrightarrow [L_\nu (p_0, q_0), L_\nu (p_1, q_1)]_{\theta, q} = L_\nu (p, q),$$
with $\frac 1p = \frac {1-\theta}{p_0}+\frac \theta {p_1}.$
We conclude that $[A_\nu (p_0, q_0), A_\nu (p_1, q_1)]_{\theta, q}\hookrightarrow Hol (T_\Omega) \cap  L_\nu (p, q) = A_\nu (p, q).$\\
We next show the converse, i.e. $A_\nu (p, q) \hookrightarrow [A_\nu (p_0, q_0), A_\nu (p_1, q_1)]_{\theta, q}.$ If we denote $A_0 = A_\nu (p_0, q_0), \hskip 1truemm A_1 = A_\nu (p_1, q_1), \hskip 1truemm \overline {A}= A_0 +A_1,$ we must show that there exists a positive constant $C$ such that
\begin{equation}\label{star1}
||F||_{[A_0, A_1]_{\theta, q}} \leq C||F||_{L_\nu (p, q)}\quad \quad \forall F\in A_\nu (p, q).
\end{equation}
We recall that
\begin{equation}\label{star2}
||F||_{[A_0, A_1]_{\theta, q}} = \left (\int_0^\infty \left (t^{-\theta} K(t, F, \overline {A}) \right )^q \frac {dt}t \right )^{\frac 1q}
\end{equation}
with
$$K(t, F, \overline {A}) = \inf \hskip 2truemm \{||a_0||_{A_0} + t||a_1||_{A_1}: F=a_0 +a_1, a_0 \in A_0, a_1\in A_1\}.$$
We denote $L_0 = L_\nu (p_0, q_0), \hskip 1truemm L_1 = L_\nu (p_1, q_1), \hskip 1truemm \overline {L}= L_0 +L_1.$ Since $L_\nu (p, q)$ identifies with $[L_\nu (p_0, q_0), L_\nu (p_1, q_1)]_{\theta, q}$ with equivalence of norms (Theorem 2.17), we have
\begin{equation}\label{star3}
||F||_{L_\nu (p, q)} = \left (\int_0^\infty \left (t^{-\theta} K(t, F, \overline {L}) \right )^q \frac {dt}t \right )^{\frac 1q}.
\end{equation}
Comparing (\ref{star2}) and (\ref{star3}), the estimate (\ref{star1}) will follow if we can prove that
$$K(t, F, \overline {A})\leq CK(t, F, \overline {L})\quad \quad \forall F\in A_\nu (p, q).$$
\indent
We first suppose that $1< p_0 < p_1 <Q_\nu, \hskip 1truemm 1\leq q_0, q_1\leq \infty$  and  $0<\theta<1.$  Let $\frac 1p=\frac {1-\theta}{p_0}+\frac \theta {p_1}, \hskip 1truemm 1< q<\infty$ and let $F\in A_\nu (p, q).$ By Theorem 1.2,  for $\gamma$ sufficiently large we have $F=P_\gamma F$ and by Corollary 4.2, $P_\gamma$ extends to bounded operator from $L_i$ to $A_i, \hskip 2truemm i=0, 1.$ So the inclusion
$$\{(a_0, a_1)\in A_0 \times A_1: F=a_0 + a_1\}$$
$$\supset \{(P_\gamma a_0, P_\gamma a_1): (a_0, a_1)\in L_0 \times L_1, F=a_0 + a_1\}$$
implies that
$$
K(t, F, \overline {A})$$
$$\leq \inf \hskip 1truemm \left \{||P_\gamma a_0||_{L_0} + t||P_\gamma a_1||_{L_1}: F=a_0 +a_1, a_0 \in L_0, a_1\in L_1 \right \}$$
$$\leq \inf \hskip 1truemm \left \{||P_\gamma||_0 ||a_0||_{L_0} + t||P_\gamma||_1 ||a_1||_{L_1}: F=a_0 +a_1, a_0 \in L_0, a_1\in L_1 \right \}
$$
where $||\cdot||_i$ denotes the operator norm on $L_i \quad (i=0, 1).$ We have shown that
$$K(t, F, \overline {A})\leq \left (\max \limits_{i=0, 1} ||P_\gamma||_i \right )K(t, F, \overline {L}).$$
Remind that $\max \limits_{i=0, 1} ||P_\gamma||_i <\infty.$ This completes the proof in this first case.\\
\indent
We next suppose that $1+Q_\nu^{-1}< p_0 < p_1 <1+Q_\nu , \hskip 1truemm 1\leq q_0, q_1\leq \infty$  and  $0<\theta<1.$ Let $\frac 1p=\frac {1-\theta}{p_0}+\frac \theta {p_1}, \hskip 1truemm 1\leq q<\infty$ and let $F\in A_\nu (p, q).$   Again by Theorem 1.2, we have $F=P_\nu F$  and by Corollary 4.2, $P_\nu$ extends to bounded operator from $L_i$ to $A_i, \hskip 2truemm i=0, 1.$ So the inclusion
$$\{(a_0, a_1)\in A_0 \times A_1: F=a_0 + a_1\}\supset \{(P_\nu a_0, P_\nu a_1): (a_0, a_1)\in L_0 \times L_1, F=a_0 + a_1\}$$
implies that
$$
\begin{array}{clcr}
K(t, F, \overline {A})&\leq \inf \hskip 1truemm \left \{||P_\nu a_0||_{L_0} + t||P_\nu a_1||_{L_1}: F=a_0 +a_1, a_0 \in L_0, a_1\in L_1 \right \}\\
&\leq \inf \hskip 1truemm \left \{||P_\nu||_0 ||a_0||_{L_0} + t||P_\nu||_1 ||a_1||_{L_1}: F=a_0 +a_1, a_0 \in L_0, a_1\in L_1 \right \}\\
&\leq \left (\max \limits_{i=0, 1} ||P_\nu ||_i \right )K(t, F, \overline {L}).
\end{array}
$$
This finishes the proof in the second case.\\

%Let $\gamma \geq \nu +\frac nr -1.$ By Theorem \ref{th2} and Theorem 2.18, if we denote $L_0 = L_\nu (p_0, q_0), \hskip 1truemm L_1 = L_\nu (p_1, q_1), \hskip 1truemm \bar L= L_\nu (p_0, q_0)+L_\nu (p_1, q_1),$ we have
%$$
%\begin{array}{clcr}
%A_\nu (p, q) &= P_\gamma L_\nu (p, q)\\
%& = P_\gamma ([L_0, L_1]_{\theta, q})\\
%&=\{P_\gamma a: a\in \overline {L}, ||a||_{\theta, q, \overline {L}} < \infty\}.
%\end{array}
%$$ 
%We recall that
%$$||a||_{\theta, q, \overline {L}} = (\int_0^\infty t^{-\theta} K(t, a, \overline {L})^q \frac {dt}t)^{\frac 1q}$$
%if $1\leq q < \infty$ (resp.
%$$||a||_{\theta, \infty, \overline {L}} = \sup \limits_{t>0} t^{-\theta} K(t, a, \overline {L})$$
%if $q=\infty)$ where
%$$K(t, a, \overline {L}) = \inf \hskip 2truemm \{||a_0||_{L_0} + t||a_1||_{L_1}: a=a_0 +a_1, a_0 \in L_0, a_1\in L_1\}.$$
%If we denote $A_0 = A_\nu (p_0, q_0), \hskip 1truemm A_1 = A_\nu (p_1, q_1), \hskip 1truemm \bar A= A_0 +A_1,$ 
% we obtain
%$$
%K(t, P_\gamma a, \overline {A})$$
%$$\leq \inf \hskip 2truemm \{||P_\gamma a_0||_{L_0} + t||P_\gamma a_1||_{L_1}: a=a_0 +a_1, a_0 \in L_0, a_1\in L_1\}$$
%$$\leq \inf \hskip 2truemm \{||P_\gamma||_0 ||a_0||_{L_0} + t||P_\gamma ||_1 ||a_1||_{L_1}: a=a_0 +a_1, a_0 \in L_0, a_1\in L_1\}
%$$
%where $||\cdot||_i$ denote the operator norm on $L_i \quad (i=0, 1).$ So
%$$K(t, P_\gamma a, \overline {A}) \leq \left (\max \limits_{i=0, 1} ||P_\gamma||_i\right ) \hskip 1truemm K(t, a, \overline {L}).$$
%By Theorem 4.1, we have $\max \limits_{i=0, 1} ||P_\gamma||_i < \infty.$ This completes the proof.\\
\indent
%The case where $1+Q_\nu^{-1} <  p_0 <p_1 < 1+ Q_\nu$ is treated in the same manner, replacing $P_\gamma$ by $P_\nu.$ \\

We now prove the second assertion of Theorem \ref{th3}. We combine the Wolff reiteration theorem (Theorem \ref{wolff}) with the first assertion of the theorem. Here $X_1 = A_\nu (p_0, q_0)$ and $X_4 = A_\nu (p_1, q_1).$ We take $X_2 = A_\nu (p_2, r)$ and $X_3 = A_\nu (p_3, q)$  with $1+Q_\nu^{-1} < p_2 < p_3 < Q_\nu, 1\leq q< \infty$ and $1< r < \infty.$ By assertion (1) of the theorem, we have
$$X_2 = [X_1, X_3]_{\varphi, q} \quad {\rm with} \quad \frac 1{p_2} = \frac {1-\varphi}{p_0} + \frac \varphi {p_3}$$ 
and
$$X_3 = [X_2, X_4]_{\psi, r} \quad {\rm with} \quad \frac 1{p_3} = \frac {1-\psi}{p_2} + \frac \psi {p_1}.$$
 By Theorem \ref{wolff}, we conclude that
$$X_2 = [X_1, X_4]_{\rho, q}, \quad X_3 = [X_1, X_4]_{\theta, r}$$
with
$$\rho = \frac {\varphi \psi}{1-\varphi + \varphi \psi}, \quad \theta = \frac {\psi}{1-\varphi + \varphi \psi}.$$
In other words, in the present context, we have
$$A_\nu (p_2, q) = [A_\nu (p_0, q_0), A_\nu (p_1, q_1)]_{\rho, q}, \hskip 1truemm A_\nu (p_3, r) = [A_\nu (p_0, q_0), A_\nu (p_1, q_1)]_{\theta, r}$$
with
$$\rho = \frac {\varphi \psi}{1-\varphi + \varphi \psi}, \quad \theta = \frac {\psi}{1-\varphi + \varphi \psi}.$$
To conclude, given $1+Q_\nu^{-1} < p< Q_\nu,$ take for instance $p_3 =p.$

\section{Open questions.}
\subsection{Statement of Question 1}
In \cite{BBGRS}, the following conjecture was stated.\\
{\textbf {Conjecture.}} Let $\nu >\frac nr -1.$ Then the Bergman projector $P_\nu$ admits a bounded extension to $L^p_\nu$ if and only if
$$p'_\nu <  p<  p_\nu := \frac {\nu +\frac {2n}r -1}{\frac nr -1} - \frac {(1-\nu)_+}{\frac nr -1}. $$

This conjecture was completely settled recently for the case of tube domains over Lorentz cones ($r=2$). The proof of this result is a combination of results of \cite{BBGR} and \cite{BD} (cf.  \cite{BGN}; cf. also \cite{BoNa} for the particular case where $\nu=\frac n2$). In this case, Theorem \ref{th3} can be extended as follows.

\begin{thm}
We restrict to the particular case of tube domains over Lorentz cones {\rm (}r=2{\rm)}. Let $\nu >\frac n2 -1.$ 
\begin{enumerate}
%\item
%For all $1\leq p_0 < p_1 < 1+Q_\nu,$ the real interpolation space $[A_\nu^{p_0}, A_\nu^{p_1}]_{\theta, q}$ identifies with $A_\nu (p, q)$ with equivalence of norms.
\item
For all $1<  p_0 < p_1 < Q_\nu$ (resp. $p'_\nu <  p_0 <p_1 < p_\nu$)    and $0<\theta<1,$ the real interpolation space $[A_\nu (p_0, q_0), A_\nu (p_1, q_1)]_{\theta, q}$ identifies with $A_\nu (p, q), \hskip 1truemm \frac 1p = \frac {1-\theta}{p_0}+\frac \theta {p_1}, \hskip 2truemm 1< q <\infty$ (resp. $1\leq q <\infty)$ with equivalence of norms.
\item 
For  $1<  p_0 < p'_\nu < p_1 < p_\nu$ and $1\leq q_0, q_1 \leq \infty,$ the Bergman-Lorentz spaces $A_\nu (p, q), \hskip 2truemm p'_\nu < p < Q_\nu, \hskip 2truemm 1< q <\infty$ are real interpolation spaces between $A_\nu (p_0, q_0)$ and $A_\nu (p_1, q_1).$
\end{enumerate}
\end{thm}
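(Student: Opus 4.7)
The overall plan is to mimic the argument given in Section~5 for Theorem~\ref{th3}, replacing the interval $(1+Q_\nu^{-1},1+Q_\nu)$ by the sharp interval $(p'_\nu,p_\nu)$ whenever $P_\nu$ is involved; no change is needed in the part of the argument that uses $P_\gamma$ with $\gamma$ large. The conjecture settled in \cite{BGN,BD,BoNa} for $r=2$ supplies exactly the tool that was missing, namely the boundedness of $P_\nu$ on $L^p_\nu$ in the whole range $p'_\nu<p<p_\nu$.

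The first step is to upgrade the two auxiliary results used in Section~5 so that they hold on the enlarged range. Concretely I would prove the following Lorentz-cone analogue of Corollary~4.2: for every $1\leq q\leq \infty$ and every $p'_\nu<p<p_\nu$, the Bergman projector $P_\nu$ extends to a bounded operator from $L_\nu(p,q)$ to $A_\nu(p,q)$. This is obtained exactly as Corollary~4.2 was deduced from Theorem~\ref{thm}: pick $p'_\nu<p_0<p<p_1<p_\nu$, apply the $L^{p_i}_\nu$ boundedness of $P_\nu$ from \cite{BGN,BD}, and interpolate by the Stein--Weiss theorem (Theorem~2.22) together with the identifications in Theorem~2.17. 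The second step is the identity statement: for every $p\in(p'_\nu,p_\nu)$ and every $1\leq q<\infty$, $P_\nu$ is the identity on $A_\nu(p,q)$. Following the proof of Corollary~\ref{identity}(2), it suffices to check that $A_\nu(p,q)\cap A_\nu^t$ is dense in $A_\nu(p,q)$ for some $t\in(p'_\nu,p_\nu)$. Proposition~\ref{density} provides this density in two cases: if $p'_\nu<q<\infty$, pick $t\in(p'_\nu,\min(p,q))$ and invoke assertion (3); otherwise, one uses assertion (2) (noting that its proof only required that $P_\nu$ is bounded on $L_\nu(p,q)$ and on $L_\nu^t$, which are now available throughout $(p'_\nu,p_\nu)$, so the same argument applies verbatim).

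With these two enhancements in hand, assertion (1) of Theorem~6.1 is obtained exactly as in Section~5. One embeds $[A_\nu(p_0,q_0),A_\nu(p_1,q_1)]_{\theta,q}\hookrightarrow A_\nu(p,q)$ from Theorem~2.17 and Definition~2.13 with no change. For the converse, given $F\in A_\nu(p,q)$, one writes $F=P_\nu F$ (by the identity statement above), observes that $P_\nu$ is admissible with respect to the couples $(L_0,L_1)$ and $(A_0,A_1)$ by the enhanced Corollary~4.2, and concludes via the same $K$-functional comparison
\[
K(t,F,\overline{A})\leq \Bigl(\max_{i=0,1}\|P_\nu\|_{L_i\to A_i}\Bigr)K(t,F,\overline{L})
\]
that was carried out at the end of Section~5. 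The case $1<p_0<p_1<Q_\nu$ (with $P_\gamma$ replacing $P_\nu$) is already contained in Theorem~\ref{th3} and needs no modification.

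Assertion (2) then follows from the Wolff reiteration theorem (Theorem~\ref{wolff}) exactly as in the proof of Theorem~\ref{th3}(2): take $X_1=A_\nu(p_0,q_0)$, $X_4=A_\nu(p_1,q_1)$, and intermediate spaces $X_2=A_\nu(p_2,r)$, $X_3=A_\nu(p_3,q)$ with $p'_\nu<p_2<p_3<Q_\nu$, $1<r<\infty$, $1<q<\infty$. Assertion (1), applied on the two sub-ranges $(p'_\nu,p_\nu)\supset (p'_\nu, Q_\nu)$ and $(1,Q_\nu)$, identifies $X_2=[X_1,X_3]_{\varphi,q}$ and $X_3=[X_2,X_4]_{\psi,r}$ with the usual choice of $\varphi,\psi$ dictated by $\frac{1}{p_2}$ and $\frac{1}{p_3}$; Wolff's theorem then yields $A_\nu(p_3,r)=[X_1,X_4]_{\theta,r}$ with $\theta=\psi/(1-\varphi+\varphi\psi)$, and setting $p_3=p$ finishes the proof. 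The only genuine novelty, and the step where all the analytic work is concentrated, is the enhancement of Corollary~4.2 and Corollary~\ref{identity} to the sharp interval $(p'_\nu,p_\nu)$; this is the main obstacle, and it is overcome precisely by the recent resolution of the boundedness conjecture on Lorentz cones in \cite{BGN,BD,BoNa}.
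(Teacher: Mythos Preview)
Your proposal is correct and is precisely the argument the paper has in mind: the paper does not write out a separate proof of Theorem~6.1 but presents it as the immediate extension of Theorem~\ref{th3} once the boundedness of $P_\nu$ on $L^p_\nu$ is available on the full range $(p'_\nu,p_\nu)$ for $r=2$. Your upgrades of Corollary~4.2 and Corollary~\ref{identity} to the sharp interval, followed by the verbatim repetition of the Section~5 $K$-functional comparison and the Wolff reiteration, are exactly the intended route.
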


In the upper rank case ($r\geq 3$), we always have $(1-\nu)_+=0$ and the conjecture has the form
$$p'_\nu <  p<  p_\nu := \frac {\nu +\frac {2n}r -1}{\frac nr -1}.$$
The best result so far is Theorem \ref{th2}. 
If we could show that $P_\nu$ is of restricted weak type $(p_1, p_1)$ for some $1+Q_\nu < p_1 < p_\nu$ (resp. $p'_\nu < p_1 <1+Q_\nu^{-1}),$ then by the Stein-Weiss interpolation theorem (Theorem 2.20), we would obtain that $P_\nu$ admits a bounded extension to $L_\nu^p$ for all $1+Q_\nu \leq p <p_1$ (resp. $p'_\nu < p_1 <1+Q_\nu^{-1}).$ This would improve Theorem \ref{th2}.
\vskip 2truemm
\noindent
{\textbf{Question 1.}} Prove the existence of an exponent $p_1 \in (1+Q_\nu, p_\nu) $ (resp. $p_1 \in (p'_\nu, 1+Q_\nu^{-1}))$ such that $P_\nu$ is of restricted weak type $(p_1, p_1).$ That is, there exists a positive constant $C_{p_1}$ such that for each measurable subset $E$ of $T_\Omega,$ we have
$$\sup \limits_{\lambda >0} \lambda^{p_1} \mu_{\chi_E} (\lambda)\leq C_{p_1}\mu (E).$$

For Lorentz cones ($r=2$), the latter property holds for all  $p_1 \in (p'_\nu, 1+Q_\nu^{-1})\cup (1+Q_\nu, p_\nu).$

\subsection{Question 2} 
The second question is twofold; it is induced by Proposition 4.5 and Corollary 4.6. Let $\nu >\frac nr -1.$
\begin{enumerate}
\item 
Here $1+Q_\nu^{-1}\leq q<\infty.$ Is $A_\nu (p, q)\cap A_\nu^t$ dense in $A_\nu (p, q)$ for some (all) $1+Q_\nu^{-1}<p\leq t< 1+Q_\nu$ and $p\in (q, \infty)?$
\item
Here $q=1.$ Suppose that $p \in (1, 1+Q_\nu^{-1}].$ Is $A_\nu (p, 1)\cap A_\gamma^t$ dense in $A_\nu (p, 1)$ for some (all) $ t\in (1, \infty)$ and $\gamma >\frac nr -1?$
\end{enumerate}

\subsection{Question 3}
%The third question is induced by Corollary 4.6.
Can the second assertion of Theorem 1.3 be extended to some values $p \in (1, 1+Q_\nu^{-1}]$ or $p\in [Q_\nu, 1+Q_\nu)?$

%\section{An unsolved question.}
%Prove the density of the subspace $A_\nu (p, q) \cap A_\gamma^2$ in the space $A_\nu (p, q)$ for all $\nu, \gamma > \frac nr -1$ and $1\leq p, q \leq \infty.$ This is known for $q=p$ and was proved in Proposition 4.4 for $q<  p$ in three particular cases. The case $q>p$ is completely open even for the upper half-plane.

\vskip 5truemm
\noindent
\textbf{Acknowledgements.} The authors wish to express their gratitude to Aline Bonami, Jacques Faraut, Gustavo Garrig\'os and Chokri Yacoub for valuable discussions.

\bibliographystyle{plain}

\begin{thebibliography}{1}

%\bibitem{BB}
%\textsc{D. B\'ekoll\'e, A. Bonami}, \newblock{Analysis on tube
%domains over light cones: some extensions of recent results},
%\newblock{\em Actes des Rencontres d'Analyse Complexe},
% \newblock{Ed. Atlantique Poitiers} (2000), 17-37.

%\bibitem{BBG}
%\textsc{D. B\'ekoll\'e, A. Bonami, G. Garrig\'os},
% \newblock{Littlewood-Paley decompositions related to symmetric cones}, \newblock{\em
% IMHOTEP J. Afr. Math. Pures Appl}. \textbf{3} (2000), 11-41.

\bibitem{BBGR}
\textsc{D. B\'ekoll\'e, A. Bonami, G. Garrig\'os, F. Ricci},
\newblock{\em Littlewood-Paley decompositions related to symmetric cones and
Bergman projections in tube domains}, \newblock{Proc. London Math.
Soc}. (3) \textbf{89} (2004), 317-360.

%\bibitem{BBPR}
%\textsc{D. B\'ekoll\'e, A. Bonami, Peloso M. M., F. Ricci},
%\newblock{Boundedness of weighted Bergman projections on tube domains
%over light cones}, \newblock{\em Math. Zeit}. \textbf{237}
%(2001), 31-59.

\bibitem{BBGNPR}
\textsc{D. B\'ekoll\'e, A. Bonami, G. Garrig\'os, C. Nana, M. M. Peloso, F. Ricci}, 
\newblock{\em Lecture Notes on Bergman projectors in tube domains
over cones: an analytic and geometric viewpoint},
%\newblock{IMHOTEP J. Afr. Math. Pures Appl}.
%\newblock{\em Lecture notes on Bergman projectors in tube domains over cones: an analytic and geometric viewpoint},
IMHOTEP-Afr. J. Pure Appl.  \textbf{5} (2004), Expos\'e I, Proceedings of the International Workshop in Classical Analysis, Yaound\'e
2001.

\bibitem{BBGRS}
\textsc{D. B\'ekoll\'e, A. Bonami, G. Garrig\'os, F. Ricci, B. Sehba},
\textsc{\em Hardy-type inequalities and analytic Besov spaces in tube domains over symmetric cones},
J. Reine Angew. Math. \textbf{647} (2010), 25-56.

%\bibitem{BT}
%\textsc{D. B\'ekoll\'e,  A. Temgoua Kagou}, \newblock{Reproducing
%properties and $L^p$-estimates for Bergman projections in Siegel
%domains of type $I\!I$}, {\em Studia Math}. \textbf{115} (3)
%(1995), 219-239.

%\bibitem{BT1}
%\bysame, \newblock{Molecular decompositions and interpolation},
%\newblock{\em Int. Eq. Oper. Theory}.  \textbf{31} (1998),
%150-177.


\bibitem{BGN}
\textsc{D. B\'ekoll\'e, J. Gonessa and C. Nana},
\newblock{Lebesgue mixed norm estimates for Bergman projectors: from tube domains over homogeneous cones to homogeneous Siegel domains of type II},
\newblock{preprint}. 

\bibitem{BL}
\textsc{J. Bergh and J. L\"ofstr\"om,}
\newblock{\em Interpolation Spaces An Introduction},
\newblock{Springer-Verlag Berlin Heidelberg New York} (1976).

%\bibitem{BP}
%\textsc{A. Benedek and R. Panzone},
%\newblock{The spaces $L^p$, with mixed norm},
%\ewblock{\em Duke Math. J}. \textbf{28} (1961), 301-324.

\bibitem{BS}
\textsc{C. Bennett and R. Sharpley},
\newblock{\em Interpolation of Operators},
\newblock{Pure and Applied Math.}, Vol. 129,
\newblock{Academic Press, Inc.} (1988).

%\bibitem{Brezis}
%\textsc{H. Brezis},
%\newblock{\em Analyse fonctionnelle, Th\'eorie et applications}.
%\newblock{Masson, Paris} (1983).

%\bibitem{A}
%\textsc{A. Bonami}, \newblock{Three related problems on Bergman
%spaces of tube domains over symmetric cones}, \newblock{\em Rend.
%Mat. Acc. Lincei}, \textbf{13} (2002), 183-197.

\bibitem{BoNa}
\textsc{A. Bonami and C. Nana}, \textit{Some questions related to the Bergman projection in Symmetric domains}.
Adv. Pure Appl. Math. 6, No. 4, 191-197 (2015).

\bibitem{BD}
\textsc{J. Bourgain and C. Demeter},
\textsc{\em The proof of the $l^2$-decoupling conjecture},
arxiv:1403.5335v3[math.CA] 26 Jul 2015. 



%\bibitem{CR}
%\textsc{R. Coifman, R. Rochberg}, \newblock{Representation
%theorems for holomorphic and harmonic functions in $L^p$},
%\newblock{\em Ast\'erisque}, \textbf{77} (1980), 11-66.

%\bibitem{DD}
%\textsc{D. Debertol},
%\newblock{Besov spaces and boundedness of weighted Bergman projections over symmetric tube
%domains},
%\newblock{Dottorato di Ricerca in Matematica}, \newblock{\em Universit\`a
%di Genova, Politecnico di torino}, Aprile (2003).

%\bibitem{DD1}
%\textsc{D. Debertol},
%\newblock{Besov spaces and the boundedness of weighted Bergman projections over symmetric tube
%domains},
%\newblock{Publ. Mat.}, \textbf{49} (2005), 21-72.

\bibitem{FK}
\textsc{J. Faraut, A. Kor\'anyi}, {\em Analysis on symmetric
cones},
\newblock{Clarendon Press, Oxford}, (1994).

\bibitem{G}
\textsc{J. Gonessa}, \newblock{\em Espaces de type Bergman dans les domaines homog\`enes de Siegel de type II: décomposition atomique et interpolation}.
\newblock{Th\`ese de Doctorat/Ph.D, Universit\'e de Yaound\'e I}  (2006).


\bibitem{H}
\textsc{R.A. Hunt},
\newblock{\em On $L(p, q)$ spaces}.
\newblock{\em L'Ens. Math.}  \textbf{12} (1966), 249-275.

\bibitem{JNP}
\textsc{S. Janson, P. Nilsson, J. Peetre (with Appendix by M. Zafran)},
\newblock{\em Notes on Wolff's note on interpolation spaces},
\newblock{Proc. London Math. Soc.}   \textbf{48} (3) (1984), 283-299.


%\bibitem{NT}
%\textsc{C. Nana and B. Trojan},
%\newblock{}
%\newblock{\em Ann. Scuola Norm. Sup. Pisa} Cl. Sci. (5), Vol. \textbf{X} (2011), 477-511.

%\bibitem{WR}
%\textsc{W. Rudin}, \newblock{\em Analyse r\'eelle et complexe},
%\newblock{Masson \'Ed. Paris} (1975).

%\bibitem{RT}
%\textsc{F. Ricci and M. Taibleson},
%\newblock{Boundary Values of Harmonic Functions in Mixed Norm Spaces
%  and Their Atomic Structure}.
%\newblock{\em Ann. Scuola Norm. Sup. Pisa}, (4) \textbf{10} (1983), 1-54.

%\bibitem{S}
%\textsc{B. Sehba}, {\em Hankel operators on Bergman spaces of tube domains over symmetric cones}, \newblock{Integr. Equ. Oper. Theory} \textbf{62} (2008), 233-245.

\bibitem{S}
\textsc{B. F. Sehba},
\textit{Bergman type operators in tubular domains over
symmetric cones}, Proc. Edin. Math. Soc. \textbf{52} (2) (2009), 529--544.


\bibitem{SW}
\textsc{E. M. Stein,  G. Weiss}, {\em Introduction to Fourier
Analysis on Euclidean Spaces}, \newblock{Princeton University
Press, Princeton} (1971).

%\bibitem{SW1} 
%\textsc{E. M. Stein,  G. Weiss}, {\em Interpolation of operators with change of measures}, \newblock{Trans. Amer. Math. Soc.} \textbf{87}   (1958), 159-172.

\bibitem{W}
\textsc{T. Wolff},
\newblock{\em A note on interpolation spaces},
\newblock{Harmonic Analysis} (1981 Proceedings), \newblock{Lecture Notes in Math}.
\textbf{908}, \newblock{Springer Verlag} (1982), 199-204.

%\bibitem{Z}
%\textsc{K. Zhu}
%\newblock{\em Operator Theory in function spaces},
%\newblock{Marcel Dekker, New York} (1990).

\end{thebibliography}

\end{document}